\begin{document}





\title{Products and Projective Limits of Continuous Valuations on $T_0$ Spaces}
\author{Jean Goubault-Larrecq\\
  LSV, ENS Paris-Saclay, CNRS, Universit\'e Paris-Saclay \\
  94230 Cachan, France \\
  \url{goubault@lsv.fr}}



\maketitle

\begin{abstract}
  \noindent We show analogues of the Daniell-Kolmogorov and Prohorov
  theorems on the existence of projective limits of measures, in the
  setting of continuous valuations on $T_0$ topological spaces.
\end{abstract}

\section{Introduction}
\label{sec:intro}

Consider the following problem.  We are given an arbitrary family
${(X_i)}_{i \in I}$ of topological spaces, and for each finite subset
$J$ of $I$, a measure $\nu_J$ on $X_J \eqdef \prod_{i \in J} X_i$.
There are obvious projections $p_{JJ'} \colon X_{J'} \to X_J$
($J \subseteq J'$), and we require that the measures are
\emph{compatible} in the sense that $p_{JJ'} [\nu_{J'}] = \nu_J$ for
all $J \subseteq J'$ in $\Pfin (I)$, where $p_{JJ'} [\nu_{J'}]$ is the
image measure defined by
$p_{JJ'} [\nu_{J'}] (E) \eqdef \nu_{J'} (p_{JJ'}^{-1} (E))$.  Is there
a measure $\nu$ on the total space $\prod_{i \in I} X_i$ such that
$p_J [\nu] = \nu_J$ for every $J \in \Pfin (I)$, where
$p_J \colon X \to X_j$ is the obvious projection?

This question was solved long ago by Daniell \cite{Daniell:prod} and
Kolmogorov \cite{Kolmo:proba}, and the solution was gradually
generalized to: it does, provided each $X_i$ is Hausdorff and
considered with its Borel $\sigma$-algebra, the measures $\nu_J$ and
$\nu$ are taken to be defined on the product $\sigma$-algebras (rather
than on the Borel $\sigma$-algebras of the topological products, which
are larger), and every $\nu_J$ is tight.

The resulting Daniell-Kolmogorov theorem is of paramount importance in
mathematics.  Together with the existence of finite products of
$\sigma$-finite measures, this yields the existence of arbitrary
products of tight $\sigma$-finite measures.  The theorem is also the
basis of the theory of stochastic processes, where $I$ is $\real$,
including L\'evy processes (of which Wiener measure is a particular
case).  In general, it asserts the existence of measures on sets of
\emph{execution paths} of a stochastic transition system, with
discrete or continuous-time dynamics.

The first contribution of the present paper is a variant of that
theorem where measures are replaced by \emph{continuous valuations}
\cite{jones89,Jones:proba}.  The gain is that the spaces $X_i$ are no
longer required to be Hausdorff, and are arbitrary topological spaces,
and that the valuations $\nu_i$ need only be continuous, not tight.

Continuous valuations were proposed by Jones and Plotkin
\cite{jones89,Jones:proba} as a convenient alternative to measures in
the setting of programming language semantics, following
Saheb-Djahromi \cite{saheb-djahromi:meas}.  They are very close to
measures, and one may consult \cite{KL:measureext} to understand the
relation between continuous valuations and measures.

The typical proofs of the Daniell-Kolmogorov theorem go through
theorems establishing the existence of projective limits of measures.
The key theorem here is due to Prohorov \cite{Prohorov:projlim}, who
showed that projective limits of bounded measures exist under a
so-called uniform tightness assumption.  A close theorem by Bochner
\cite{Bochner:harmonic} involves a so-called sequential maximality
assumption.

Our proof of our variant of the Daniel-Kolmogorov theorem also uses
projective limits, but only of a very particular kind where existence
is obvious, and based on systems of embedding-projection pairs,
which should be familiar to domain theorists.

We will nonetheless address the question of the existence of general
projective limits of continuous valuations, which are of interest
beyond the case of the Daniell-Kolmogorov theorem.  Since our aim is
to work on classes of $T_0$ spaces, not just Hausdorff spaces, this
will require us to propose a reworked definition of tightness and of
uniform tightness.

\paragraph{Outline.}  We give a few preliminary definitions and
results in Section~\ref{sec:preliminaries}.  Our central problem,
Problem~\ref{pb:prohorov}, is stated there.  For now, let us call it
the question of existence of projective limits of continuous
valuations.  In Section~\ref{sec:uniq-proj-limits}, we show that
projective limits of continuous valuations are unique, if they exist
at all.  We promised we would look at a simple case of projective
limits, of projective systems consisting of embedding-projection
pairs: this is the topic of Section~\ref{sec:ep-systems}.  Together
with an adequate notion of support of continuous valuations, this
allows us to derive the announced analogue of the Daniell-Kolmogorov
theorem in Section~\ref{sec:dani-kolm-theor}.  This concludes the first
part of the paper.

In a second part, we solve the general problem of existence of
projective limits of continuous valuations, following Prohorov's
approach, suitably generalized to $T_0$ spaces.  This notably requires
a new definition of tightness and uniform tightness, which we give and
develop in Section~\ref{sec:tightn-unif-tightn}.  We shall see that
all continuous valuations are tight on any consonant space, a pretty
large class of spaces that includes all locally compact spaces and all
complete metric spaces.  Then, the existence of a \emph{tight}
projective limit of continuous valuations will be equivalent to
uniform tightness.  The construction will be pretty transparent.

Checking uniform tightness is difficult in general, though.  We shall
give two cases where it is automatically satisfied.  Both cases rely
on a theorem on projective limits of compact sober spaces, which
Fujiwara and Kato call \emph{Steenrod's theorem}
\cite{FK:rigid:geometry}, and which we shall state in
Section~\ref{sec:steendrods-theorem}.  Both cases also apply to
projective systems indexed by a set that has a countable cofinal
subset, so as to avoid certain pathologies (see
Section~\ref{sec:preliminaries}).  We shall then see that, under that
assumption, projective limits of continuous valuations exist provided
all the spaces in the given projective system are locally compact and
sober (Section~\ref{sec:proj-syst-core}), or provided they are
$G_\delta$ subspaces of locally compact sober spaces and all the given
valuations are locally finite (Section~\ref{sec:proj-limits-compl}).
Beyond locally compact sober spaces, the latter case includes all
complete metric spaces (including all Polish spaces), and all
continuous complete quasi-metric spaces in their $d$-Scott topology.

\section{Preliminaries}
\label{sec:preliminaries}

We use \cite{JGL-topology} as our main reference on topology and
domain theory, as well as \cite{GHKLMS:contlatt} on domain theory.

We always write $\leq$ for the specialization preordering of
topological spaces.  In particular, every open set if upwards closed
and every continuous map is monotonic.  We write $\upc A$ for the
upward closure of a set $A$, $\dc A$ for its downward closure.  The
specialization preordering on a subspace $A$ of a space $X$ is the
restriction of the specialization preordering of $X$ to $A$.

Given a general preordered set $X$ (for example a topological space),
a \emph{monotone net} is a family ${(x_i)}_{i \in I, \sqsubseteq}$ of
points of $X$ indexed by a directed preordered set $I, \sqsubseteq$,
such that $i \sqsubseteq j$ implies $x_i \leq x_j$.  A directed family
is always the underlying set of elements of a monotone net.

A \emph{compact} subset $K$ of a space $X$ is one with the Heine-Borel
property, equivalently: $K$ is compact if and only if, for every
directed family ${(U_i)}_{i \in I}$ of open subsets of $X$, if
$K \subseteq \dcup_{i \in I} U_i$ then $K \subseteq U_i$ for some
$i \in I$.  (Superscript arrows indicate directness.)  In particular,
compactness does not require Hausdorffness.

A \emph{saturated} subset $A$ of a space $X$ is one that is the
intersection of its open neighborhoods.  The saturated subsets are
exactly the upwards closed subsets, relatively to the specialization
preordering.

A \emph{locally compact} space $X$ is a space where every open subset
$U$ is the directed union of the interiors $\interior Q$ of compact
saturated subsets $Q$ of $U$.

A \emph{sober} space $X$ is a $T_0$ space in which every irreducible
closed subset is the closure $\dc x$ of some (unique) point $x$.  A
closed set $C$ is \emph{irreducible} if it is non-empty, and satisfies
the property that if $C$ is included in the union of two closed sets,
then it must be included in one of them.

A \emph{well-filtered} space $X$ is a space such that every open
subset $U$ that contains a filtered intersection $\fcap_{i \in I} Q_i$
of compact saturated subsets $Q_i$ must contain some $Q_i$ already.
Every sober space is well-filtered.  In a well-filtered space, every
filtered intersection $\fcap_{i \in I} Q_i$ of compact saturated
subsets is compact saturated.

Let $\creal \eqdef \Rp \cup \{\infty\}$ be the set of extended
non-negative real numbers.  We write $\Open X$ for the lattice of open
subsets of a topological space $X$.

A \emph{valuation} $\nu$ on a topological space $X$ is a map from
$\Open X$ to $\creal$ that satisfies: $\nu (\emptyset) = 0$
(strictness); $U \subseteq V$ implies $\nu (U) \leq \nu (V)$
(monotonicity); and
$\nu (U \cup V) + \nu (U \cap V) = \nu (U) + \nu (V)$ (modularity).

A \emph{continuous valuation} is a valuation that is Scott-continuous,
i.e., $\nu (\dcup_{i \in I} U_i) = \dsup_{i \in I} \nu (U_i)$.  A
valuation $\nu$ is \emph{bounded} if and only if $\nu (X) < \infty$.

A \emph{projective system} in a category $\catc$ is a functor from
$(I, \sqsubseteq)^{op}$ to $\catc$, for some directed preordered set
$I, \sqsubseteq$.  Explicitly, this is a family of objects
${(X_i)}_{i \in I, \sqsubseteq}$ of $\catc$, where $I$ is a set with a
preorder $\sqsubseteq$ that makes $I$ directed; together with
morphisms $p_{ij} \colon X_j \to X_i$ for all $i \sqsubseteq j$ in
$I$, satisfying $p_{ii} = \identity {X_i}$ and
$p_{ij} \circ p_{jk} = p_{ik}$ for all $i \sqsubseteq j \sqsubseteq k$
in $I$.

The maps $p_{ij}$ will familiarly be called the \emph{bonding maps}
of the projective system.

We shall write such a projective system
${(p_{ij} \colon X_j \to X_i)}_{i \sqsubseteq j \in I}$, for short.  A
limit of such a system is a universal cone
$X, {(p_i \colon X \to X_i)}_{i \in I}$, and is familiarly called a
\emph{projective limit} of the system.

Projective limits, if they exist, are unique up to isomorphism.  In
$\Topcat$, all projective limits exist.  There is a \emph{canonical
  projective limit} in $\Topcat$, which is described as follows: $X$
is the subspace of those
$\vec x \eqdef {(x_i)}_{i \in I} \in \prod_{i \in I} X_i$ such that
$p_{ij} (x_j) = x_i$ for all $i \sqsubseteq j$ in $I$---notably, the
topology on $X$ is the subspace topology induced by the product
topology on $\prod_{i \in I} X_i$; and $p_i$ just maps $\vec x$ to
$x_i$.

One should be aware that projective limits can behave in a somewhat
odd way.  Henkin had shown that, even if every $X_i$ is non-empty and
every $p_{ij}$ is surjective, the projective limit $X$ can be empty
\cite{Henkin:invlimit}, and the maps $p_i$ are therefore not
surjective.  Waterhouse \cite{Waterhouse:projlim:empty} gave an
elementary example: take an uncountable set $A$, let $I$ be the family
of finite subsets of $A$ ordered by inclusion, let $X_i$ be the set
(a.k.a., the discrete space, if you wish) of all injective maps from
$i \subseteq A$ to $\nat$, and $p_{ij}$ be defined by restriction.
One sees that the projective limit is isomorphic to the set of
injection of $A$ into $\nat$, which is empty.

Such pathologies do not happen if $I$ has a countable cofinal
subfamily.

Our interest on projective limits lies within the category
$\Val\Topcat$ of \emph{valued spaces} (our analogue of measure
spaces), whose objects are pairs $(X, \nu)$ of a topological space $X$
and a continuous valuation $\nu$ on $X$, and whose morphisms
$f \colon (X, \mu) \to (Y, \nu)$ are the continuous maps
$f \colon X \to Y$ such that $f [\mu] = \nu$.
The notation $f [\mu]$ denotes the \emph{image valuation} of $\mu$ by
$f$, defined by $f [\mu] (V) \eqdef \mu (f^{-1} (V))$ for every $V \in
\Open Y$.  We shall sometimes use the same notation $f [\mu]$ for
arbitrary maps $\mu \colon \Open X \to \creal$.

One of the central problems that we attack in this paper is:
\begin{problem}[Projective limits of continuous valuations]
  \label{pb:prohorov}
  Consider a projective system
  ${(p_{ij} \colon (X_j, \nu_j) \to (X_i, \nu_i))}_{i \sqsubseteq j
    \in I}$ of valued spaces, and let $X, {(p_i)}_{i \in I}$ be a
  projective limit of the underlying projective system
  ${(p_{ij} \colon X_j \to X_i)}_{i \sqsubseteq j \in I}$ of
  topological spaces.

  Find sufficient, general conditions that ensure the existence of a
  continuous valuation $\nu$ on $X$ such that $p_i [\nu] = \nu_i$ for
  every $i \in I$.
\end{problem}
Note how relevant Henkin's and Waterhouse's examples of empty
projective limits are.  If $\nu$ exists as required in
Problem~\ref{pb:prohorov}, then $\nu (X) = \nu_i (X_i)$ for every
$i \in I$.  If the projective limit $X$ is empty, then $\nu (X)$ must
be equal to $0$, and therefore Problem~\ref{pb:prohorov} has a
solution if and only if each $\nu_i$ is the zero valuation.  This
partly justifies why we will assume that $I$ has a countable cofinal
subset in the final two sections of this paper.

\section{Uniqueness of projective limits}
\label{sec:uniq-proj-limits}

We first show that if Problem~\ref{pb:prohorov} has a solution, it
must be unique.

\begin{lemma}
  \label{lemma:prohorov:top}
  Let ${(p_{ij} \colon X_j \to X_i)}_{i \sqsubseteq j \in I}$ be a
  projective system of topological spaces, and $X, {(p_i)}_{i \in I}$
  be some projective limit of that system.

  For every open subset $U$ of $X$, for every $i \in I$, there is a
  largest open subset $U_i$ of $X_i$ such that $p_i^{-1} (U_i)
  \subseteq U$.  We write it $p_i^* (U)$.  Then:
  \begin{enumerate}
  \item for all $i \sqsubseteq j$ in $I$, $p_{ij}^{-1} (p_i^* (U))
    \subseteq p_j^* (U)$;
  \item for all $i \sqsubseteq j$ in $I$, $p_i^{-1} (p_i^* (U)) \subseteq
    p_j^{-1} (p_j^* (U))$;
  \item $U = \dcup_{i \in I} p_i^{-1} (p_i^* (U))$.
  \end{enumerate}
\end{lemma}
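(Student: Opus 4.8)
The plan is to first check that $p_i^* (U)$ is well defined, then dispose of items~(1) and~(2), which are formal consequences of the cone identity $p_i = p_{ij} \circ p_j$, and finally to prove item~(3), whose only substantial content is the inclusion $U \subseteq \dcup_{i \in I} p_i^{-1} (p_i^* (U))$. For existence, I would note that the collection $\mathcal V$ of all open $V \in \Open {X_i}$ with $p_i^{-1} (V) \subseteq U$ is nonempty (it contains $\emptyset$) and closed under arbitrary unions, since $p_i^{-1}$ commutes with unions; hence $\bigcup \mathcal V$ is again a member of $\mathcal V$ and is the largest such set, so I set $p_i^* (U) \eqdef \bigcup \mathcal V$. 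Throughout I will use that $X$, being a projective limit in $\Topcat$, carries the initial topology induced by the maps $p_i$; equivalently, by uniqueness of limits up to isomorphism, I may assume $X$ is the canonical projective limit with the subspace topology inherited from $\prod_{i \in I} X_i$. The structural fact I will need is that the sets $\bigcap_{i \in F} p_i^{-1} (V_i)$, with $F \subseteq I$ finite and each $V_i \in \Open {X_i}$, form a base of the topology of $X$.

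For~(1), since $p_{ij}$ is continuous, $p_{ij}^{-1} (p_i^* (U))$ is open in $X_j$, and $p_j^{-1} (p_{ij}^{-1} (p_i^* (U))) = p_i^{-1} (p_i^* (U)) \subseteq U$, using $p_{ij} \circ p_j = p_i$ together with the defining property of $p_i^* (U)$. Thus $p_{ij}^{-1} (p_i^* (U))$ belongs to the family whose supremum defines $p_j^* (U)$, whence $p_{ij}^{-1} (p_i^* (U)) \subseteq p_j^* (U)$. For~(2), I would simply apply $p_j^{-1}$ to the inclusion of~(1) and invoke $p_j^{-1} \circ p_{ij}^{-1} = p_i^{-1}$ once more, obtaining $p_i^{-1} (p_i^* (U)) \subseteq p_j^{-1} (p_j^* (U))$.

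For~(3), directedness of $I$ together with~(2) shows that ${(p_i^{-1} (p_i^* (U)))}_{i \in I}$ is a monotone net, so the directed union is meaningful; the inclusion $\dcup_{i \in I} p_i^{-1} (p_i^* (U)) \subseteq U$ is immediate from the definition of $p_i^*$. The crux is the reverse inclusion. Given $x \in U$, I would choose a basic open $B = \bigcap_{i \in F} p_i^{-1} (V_i) \subseteq U$ containing $x$, with $F$ finite; using directedness of $I$, pick $k \in I$ above every $i \in F$, and set $W \eqdef \bigcap_{i \in F} p_{ik}^{-1} (V_i) \in \Open {X_k}$. A short computation with $p_{ik} \circ p_k = p_i$ gives $p_k^{-1} (W) = B \subseteq U$, so $W \subseteq p_k^* (U)$ and therefore $x \in B = p_k^{-1} (W) \subseteq p_k^{-1} (p_k^* (U))$, exhibiting $x$ in the union. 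I expect this last inclusion to be the main obstacle, as it is the only place where the specific topology of a projective limit (the finite-intersection base above) and the directedness of $I$ are genuinely used; everything else reduces to formal manipulation of preimages and the cone equations.
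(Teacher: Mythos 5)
Your proposal is correct and follows essentially the same route as the paper: reduce to the canonical projective limit, define $p_i^*(U)$ as the union of all opens with preimage in $U$, derive items~1 and~2 from the cone equation $p_i = p_{ij}\circ p_j$ and maximality, and prove the hard inclusion of item~3 by using directedness of $I$ to collapse a finite intersection $\bigcap_{i\in F} p_i^{-1}(V_i)$ into a single preimage $p_k^{-1}(W)$. The only cosmetic difference is that the paper packages this collapse as the statement that the subbasic sets $p_i^{-1}(V_i)$ are closed under binary intersections (hence form a base), whereas you perform it directly on each basic open set.
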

\begin{proof}
  It does not matter whether one takes one projective limit or
  another, since they are all isomorphic.  Hence we consider that $X$
  is the canonical projective limit
  $\{\vec x \in \prod_{i \in I} X_i \mid \forall i \sqsubseteq j, x_i
  = p_{ij} (x_j)\}$, and that $p_i$ is projection onto the $i$th
  coordinate.

  The union $U_i$ of all the open subsets of $X_i$ whose inverse image
  by $p_i$ is included in $U$ is also such that
  $p_i^{-1} (U_i) \subseteq U$.  Hence $U_i$ is the largest open
  subset such that $p_i^{-1} (U_i) \subseteq U$.  This shows that
  $p_i^* (U) \eqdef U_i$ is well defined.

  1. If $i \sqsubseteq j$, then
  $p_i^{-1} (p_i^* (U)) = p_j^{-1} (p_{ij}^{-1} (p_i^* (U)))$, since
  $p_i = p_{ij} \circ p_j$, and that is included in $U$.  Since $p_j^* (U)$
  is the largest open subset $U_j$ such that
  $p_j^{-1} (U_j) \subseteq U$, $p_{ij}^{-1} (p_i^* (U))$ must be
  included in $p_j^* (U)$.

  2. We again use the equality
  $p_i^{-1} (p_i^* (U)) = p_j^{-1} (p_{ij}^{-1} (p_i^* (U)))$.  Now we
  use item~1 to deduce that
  $p_j^{-1} (p_{ij}^{-1} (p_i^* (U))) \subseteq p_j^{-1} (p_j^* (U))$.

  3. By item~2, the family ${(p_i^{-1} (p_i^* (U)))}_{i \in I}$ is
  directed.  By definition, $p_i^{-1} (p_i^* (U)) \subseteq U$ for
  every $i \in I$.  It remains to show that $U$ is included in
  $\dcup_{i \in I} p_i^{-1} (p_i^* (U))$.

  Since $X$ has the subspace topology induced by the inclusion in
  $\prod_{i \in I} X_i$, and $p_i$ is projection on the $i$th
  component, the topology of $X$ is generated by subbasic open sets
  $p_i^{-1} (V_i)$, with $V_i$ open in $X_i$.  Call $\mathcal B$ the
  family of all those subbasic open sets.

  The whole set $X$ is equal to $p_i^{-1} (X_i)$, for any $i \in I$,
  hence is in $\mathcal B$.

  Consider any two subbasic open sets $p_i^{-1} (V_i)$ and
  $p_i^{-1} (V_j)$.  Since $I$ is directed, there is a $k \in I$ above
  both $i$ and $j$.  We claim that
  $p_i^{-1} (V_i) \cap p_j^{-1} (V_j) = p_k^{-1} (p_{ik}^{-1} (V_i)
  \cap p_{jk}^{-1} (V_j))$.  Every element $\vec x$ in the left-hand
  side is such that $x_i \in V_i$ and $x_j \in V_j$.  Since
  $x_i = p_{ik} (x_k)$, $x_k$ is in $p_{ik}^{-1} (V_i)$ and similarly
  $x_k$ is in $p_{jk}^{-1} (V_j)$.  This shows that $\vec x$ is in the
  right-hand side.  Conversely, any element $\vec x$ of
  $p_k^{-1} (p_{ik}^{-1} (V_i) \cap p_{jk}^{-1} (V_j))$ is such that
  $x_k$ is both in $p_{ik}^{-1} (V_i)$ and in $p_{jk}^{-1} (V_j)$, so
  $x_i \in V_i$ and $x_j \in V_j$.

  It follows that $\mathcal B$ is closed under finite intersections.
  We can therefore write $U$ as a union of elements of $\mathcal B$.
  For every $\vec x \in U$, let $p_i^{-1} (U_i)$ be an element of
  $\mathcal B$ included in $U$ that contains $\vec x$.  Since
  $p_i^{-1} (U_i) \subseteq U$, $U_i$ is included in $p_i^*  (U_i)$,
  and therefore
  $\vec x \in p_i^{-1} (U_i) \subseteq p_i^{-1} (p_i^* (U))$, so that
  $\vec x$ is in $\dcup_{i \in I} p_i^{-1} (p_i^*  (U))$.
\end{proof}

\begin{remark}
  \label{rem:p*}
  A more synthetic description of $p_i^*$ is as a right-adjoint to
  the map $p_i^{-1} \colon \Open {X_i} \to \Open X$.  Indeed $p_i^{-1}
  (U_i) \subseteq U$ if and only if $U_i \subseteq p_i^* (U)$.
\end{remark}

\begin{proposition}
  \label{prop:prohorov:unique}
  Let ${(p_{ij} \colon X_j \to X_i)}_{i \sqsubseteq j \in I}$ be a
  projective system of topological spaces, and $X, {(p_i)}_{i \in I}$
  be some projective limit of that system.  Given any family of
  Scott-continuous maps $\nu_i \colon \Open {X_i} \to \creal$,
  $i \in I$, such that for all $i \sqsubseteq j$ in $I$,
  $\nu_i = p_{ij} [\nu_j]$, there is at most one Scott-continuous map
  $\nu \colon \Open X \to \creal$ such that, for every $i \in I$,
  $\nu_i = p_i [\nu]$.

  If such a map $\nu$ exists, and every $\nu_i$ is a continuous
  valuation, then $\nu$ is a continuous valuation.
\end{proposition}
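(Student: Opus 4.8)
The plan is to extract from the hypotheses a single closed-form expression for $\nu (U)$ that makes no reference to $\nu$ beyond the data ${(\nu_i)}_{i \in I}$, and then read off both uniqueness and the valuation properties from it. The starting point is that the hypothesis $p_i [\nu] = \nu_i$ unfolds to $\nu (p_i^{-1} (V)) = \nu_i (V)$ for every open $V$ of $X_i$. Combining this with item~3 of Lemma~\ref{lemma:prohorov:top}, which writes $U = \dcup_{i \in I} p_i^{-1} (p_i^* (U))$ as a \emph{directed} union (item~2 guarantees directedness), and with the assumed Scott-continuity of $\nu$, I obtain
\[
  \nu (U) = \dsup_{i \in I} \nu (p_i^{-1} (p_i^* (U))) = \dsup_{i \in I} \nu_i (p_i^* (U)).
\]
The right-hand side depends only on the family ${(\nu_i)}_{i \in I}$ and on the maps $p_i^*$, which are determined by the projective system alone; hence any two Scott-continuous solutions must agree on every open $U$, which is uniqueness.

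For the second statement I must check that $\nu$, already known to be Scott-continuous, is a valuation, i.e.\ strict, monotone and modular. Strictness is immediate, since $\nu (\emptyset) = \nu (p_i^{-1} (\emptyset)) = \nu_i (\emptyset) = 0$. Monotonicity is free: a map preserving directed suprema is automatically monotone, as one sees by applying $\nu$ to the directed pair $\{U, U'\}$ when $U \subseteq U'$. So the whole content lies in modularity.

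For modularity I would fix open sets $U, V$ of $X$ and exploit that the directed families ${(p_k^{-1} (p_k^* (U)))}_{k \in I}$ and ${(p_k^{-1} (p_k^* (V)))}_{k \in I}$ are indexed by the \emph{same} directed set $I$, so that
\[
  U \cup V = \dcup_{k \in I} p_k^{-1} (p_k^* (U) \cup p_k^* (V)), \qquad U \cap V = \dcup_{k \in I} p_k^{-1} (p_k^* (U) \cap p_k^* (V)),
\]
using only that $p_k^{-1}$ preserves finite unions and intersections. Applying the displayed formula for $\nu$ (equivalently, Scott-continuity of $\nu$ together with $\nu \circ p_k^{-1} = \nu_k$) turns $\nu (U \cup V)$, $\nu (U \cap V)$, $\nu (U)$, $\nu (V)$ into the directed suprema over $k$ of $\nu_k (p_k^* (U) \cup p_k^* (V))$, $\nu_k (p_k^* (U) \cap p_k^* (V))$, $\nu_k (p_k^* (U))$, $\nu_k (p_k^* (V))$ respectively. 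For each fixed $k$, modularity of the valuation $\nu_k$ gives the desired identity term by term; it then remains to pass the four directed suprema through the sum. This last passage is legitimate because all four nets are monotone in $k$ and addition on $\creal$ is Scott-continuous, so $\dsup_k (a_k + b_k) = \dsup_k a_k + \dsup_k b_k$ for monotone nets.

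I expect the modularity step to be the only real obstacle, and within it the delicate point is that the right adjoint $p_k^*$ (Remark~\ref{rem:p*}) preserves intersections but \emph{not} unions, so one cannot hope to relate $p_k^* (U \cup V)$ to $p_k^* (U) \cup p_k^* (V)$ directly. The way around this is to never form $p_k^* (U \cup V)$ at all: I keep $p_k^* (U)$ and $p_k^* (V)$ separate and push everything back through $p_k^{-1}$, which \emph{does} distribute over both connectives, so that the modularity of $\nu$ reduces cleanly to the already-known modularity of each $\nu_k$.
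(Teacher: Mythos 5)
Your proposal is correct and follows essentially the same route as the paper's own proof: uniqueness via the formula $\nu(U) = \dsup_{i \in I} \nu_i(p_i^*(U))$ obtained from Lemma~\ref{lemma:prohorov:top}, item~3, and modularity by rewriting $U \cup V$ and $U \cap V$ as directed unions of $p_i^{-1}(p_i^*(U) \cup p_i^*(V))$ and $p_i^{-1}(p_i^*(U) \cap p_i^*(V))$ over the same index set, then passing modularity of each $\nu_i$ through the directed suprema using Scott-continuity of addition on $\creal$. The only cosmetic difference is that you spell out strictness and monotonicity explicitly, and you should note (as the paper does) that the identity for $U \cap V$ uses the monotone-net property from item~2, not just distribution of $p_k^{-1}$ over intersections.
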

\begin{proof}
  If $\nu$ exists, then for every family of open subsets $U_i$ of
  $X_i$, $i \in I$, such that ${(p_i^{-1} (U_i))}_{i \in I}$ is
  directed,
  $\nu (\dcup_{i \in I} p_i^{-1} (U_i)) = \dsup_{i \in I} \nu
  (p_i^{-1} (U_i)) = \dsup_{i \in I} \nu_i (U_i)$, so $\nu$ is
  determined uniquely on those open subsets of the form
  $\dcup_{i \in I} p_i^{-1} (U_i)$.  Lemma~\ref{lemma:prohorov:top},
  item~3, assures us that every open subset $U$ of $X$ can be written
  in this way, by letting $U_i \eqdef p_i^{-1} (U_i)$.  Therefore
  $\nu$ is determined uniquely.

  Now assume that every $\nu_i$ is a continuous valuation, still
  assuming that $\nu$ exists.  Let
  $U \eqdef \dcup_{i \in I} p_i^{-1} (U_i)$ and
  $V \eqdef \dcup_{i \in I} p_i^{-1} (V_i)$ be two open subsets of
  $X$, where $U_i \eqdef p_i^* (U)$ and $V_i \eqdef p_i^* (V)$.
  $U \cup V$ is equal to $\dcup_{i \in I} p_i^{-1} (U_i \cup V_i)$.
  The intersection $U \cap V$ is also equal to
  $\dcup_{i \in I} p_i^{-1} (U_i \cap V_i)$, but that requires some
  checking.  For every element $\vec x$ of $U \cap V$, $\vec x$ is in
  $p_i^{-1} (U_i)$ for some $i \in I$ and in $p_j^{-1} (V_j)$ for some
  $j \in I$.  Since $I$ is directed, pick $k \in I$ above $i$ and $j$.
  Since the unions defining $U$ and $V$ are over monotone nets,
  $\vec x$ is also in $p_k^{-1} (U_k)$ and in $p_k^{-1} (V_k)$, hence
  in $p_k^{-1} (U_k \cap V_k)$, hence in
  $\dcup_{i \in I} p_i^{-1} (U_i \cap V_i)$.  The converse inclusion
  $\dcup_{i \in I} p_i^{-1} (U_i \cap V_i) \subseteq U \cap V$ is
  clear.

  Knowing that, $\nu (U \cup V) + \nu (U \cap V)$ is equal to
  $\dsup_{i \in I} \nu_i (U_i \cup V_i) + \dsup_{i \in I} \nu_i (U_i
  \cap V_i)$, hence to
  $\dsup_{i \in I} (\nu_i (U_i \cup V_i) + \nu_i (U_i \cap V_i))$ since
  addition is Scott-continuous.  In turn, that is equal to
  $\dsup_{i \in I} (\nu_i (U_i) + \nu_i (V_i)) = \dsup_{i \in I} \nu_i
  (U_i) + \dsup_{i \in I} \nu_i (V_i) = \nu (U) + \nu (V)$.
\end{proof}

\section{Ep-systems}
\label{sec:ep-systems}

An \emph{embedding-projection pair}, or \emph{ep-pair} for short, is a
pair of continuous maps
$\xymatrix{X \ar@<1ex>[r]^e & Y \ar@<1ex>[l]^p}$ such that
$p \circ e = \identity X$ and $e \circ p \leq \identity Y$.  (The
preorderings we consider are the specialization preorderings.)  Then
$p$ is called a \emph{projection} of $Y$ onto $X$, and $e$ is the
associated \emph{embedding}.

In general, we call a continuous map $p \colon Y \to X$ a
\emph{projection} if and only if it is the projection part of some
ep-pair.  If that is the case, and if $Y$ is $T_0$, then the
associated embedding $e \colon X \to Y$ must be such that, for every
$x \in X$, $e (x)$ is the smallest element $y \in Y$ such that
$x \leq p (y)$, so that $e$ is determined uniquely from $p$.

In a projective system of $T_0$ spaces $X_i$, $i \in I$, whose bonding
maps $p_{ij}$ are projections, we obtain corresponding embeddings
$e_{ij}$ this way, and since $e_{ij}$ is determined uniquely from
$p_{ij}$, it follows that $e_{ii}=\identity {X_i}$ for every
$i \in I$, and $e_{jk} \circ e_{ij} = e_{ik}$ for all
$i \sqsubseteq j \sqsubseteq k$ in $I$.

An \emph{ep-system} in $\catc$ is, categorically, a functor from
$(I, \sqsubseteq)^{op}$ to $\catc^{\text{ep}}$, where $I, \sqsubseteq$
is a directed preorder and $\catc^{\text{ep}}$ is the category whose
objects are the same as those of $\catc$, and whose morphisms are the
ep-pairs.  Explicitly, this is given by:
\begin{enumerate*}[label=(\roman*)]
\item a family of objects $X_i$ of $\catc$, $i \in I$;
\item ep-pairs $\xymatrix{X_i \ar@<1ex>[r]^{e_{ij}} & X_j
    \ar@<1ex>[l]^{p_{ij}}}$ for all $i \sqsubseteq j$ in $I$,
  satisfying:
\item $e_{ii} = p_{ii} = \identity {X_i}$ for every $i \in I$, and:
\item $p_{ij} \circ p_{jk} = p_{ik}$ and
  $e_{jk} \circ e_{ij} = e_{ik}$ for all
  $i \sqsubseteq j \sqsubseteq k$ in $I$.
\end{enumerate*}

Every ep-system has an underlying projective system, which we obtain
by forgetting the embeddings $e_{ij}$.  If every $X_i$ is $T_0$, we
have seen that we could reconstruct them anyway, under the promise
that every $p_{ij}$ is a projection at all.  Hence it makes sense to
talk about the projective limit of an ep-system.

Those are entirely classical notions, see \cite{AJ:domains} for
example.

The following says that projective limits of ep-systems are as nice as
they can be.  Since each $p_i$ is a projection, notably (item~2), all
of them are surjective, and in particular the projective limit cannot
be empty, unless every $X_i$ is empty.
\begin{lemma}
  \label{lemma:epsystem}
  Given an ep-system in $\Topcat$ consisting of spaces $X_i$,
  $i \in I$, and ep-pairs $(p_{ij}, e_{ij})$, $i \sqsubseteq j$ in
  $I$, and letting $X$ be the canonical projective limit $\{\vec x \in
  \prod_{i \in I} X_i \mid \forall i \sqsubseteq j, x_i = p_{ij}
  (x_j)\}$, define $p_i \colon X \to X_i$ by $p_i (\vec x) \eqdef
  x_i$.  Then:
  \begin{enumerate}
  \item every map $p_i$ is continuous;
  \item there are maps $e_i \colon X_i \to X$ that make $(p_i, e_i)$
    into ep-pairs, $i \in I$;
  \item for all $i \sqsubseteq j$ in $I$, $e_j \circ e_{ij} = e_i$;
  \item for every $\vec x \in X$, the family
    ${(e_i (p_i (\vec x)))}_{i \in I, \sqsubseteq}$ is a monotone net
    in $X$, and has $\vec x$ as supremum;
  \item for every open subset $U$ of $X$,
    ${((e_i \circ p_i)^{-1} (U))}_{i \in I, \sqsubseteq}$ is a
    monotone net in $\Open X$, and its union is $U$.
  \end{enumerate}
\end{lemma}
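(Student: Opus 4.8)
The plan is to dispatch item~1 immediately---each $p_i$ is the restriction to $X$ of the $i$th product projection, hence continuous---and to concentrate all the work on constructing the embeddings $e_i$ in item~2; once these are available with the expected ep-pair properties, items~3--5 drop out by short arguments. Since the $e_{ij}$ are given as part of the ep-system, I will build $e_i$ by an explicit coherent formula rather than appealing to the uniqueness characterization of embeddings (which would require $X$ to be $T_0$).

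For item~2 I would set, coordinatewise,
\[ {(e_i (x))}_j \eqdef p_{jk} (e_{ik} (x)), \]
where $k$ is \emph{any} index with $i \sqsubseteq k$ and $j \sqsubseteq k$ (one exists since $I$ is directed). The first thing to check is independence of the choice of $k$: given two such indices, dominate both by some $l$ and use $e_{ik} = p_{kl} \circ e_{il}$ (itself coming from $e_{il} = e_{kl} \circ e_{ik}$ together with $p_{kl} \circ e_{kl} = \identity{X_k}$) and $p_{jk} \circ p_{kl} = p_{jl}$ to reduce both expressions to $p_{jl} (e_{il} (x))$. Next I would verify $e_i (x) \in X$, i.e.\ $p_{jj'} ({(e_i (x))}_{j'}) = {(e_i (x))}_j$ for $j \sqsubseteq j'$, by working with a single $k$ above $i$, $j$ and $j'$. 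Continuity of $e_i$ reduces to continuity of each $p_j \circ e_i = p_{jk} \circ e_{ik}$, a composite of bonding maps; the equality $p_i \circ e_i = \identity{X_i}$ falls out on taking $k = i$; and $e_i \circ p_i \leq \identity X$ is checked coordinatewise, using $x_i = p_{ik} (x_k)$ (coherence of $\vec x$), the ep-inequality $e_{ik} \circ p_{ik} \leq \identity{X_k}$, and monotonicity of $p_{jk}$.

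Items~3 and~4 are then short. For item~3 I compare ${(e_j (e_{ij} (x)))}_l$ with ${(e_i (x))}_l$ using a common $k$ above $j$ and $l$ and the functoriality relation $e_{jk} \circ e_{ij} = e_{ik}$. For item~4, monotonicity of the net follows from $e_i = e_j \circ e_{ij}$ (item~3), $p_i = p_{ij} \circ p_j$, and $e_{ij} \circ p_{ij} \leq \identity{X_j}$; that $\vec x$ is an upper bound is exactly $e_i \circ p_i \leq \identity X$ (item~2); and that it is the \emph{least} upper bound comes from applying $p_j$ to any competing upper bound $\vec y$ and invoking $p_j \circ e_j = \identity{X_j}$ to recover $x_j \leq y_j$ for every $j$.

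The delicate point is the ``union'' half of item~5, namely $U \subseteq \dcup_{i} {(e_i \circ p_i)}^{-1} (U)$. The naive route---that $\vec x = \dsup_i e_i (p_i (\vec x))$ (item~4) together with $U$ open forces some $e_i (p_i (\vec x)) \in U$---is \emph{not} valid in an arbitrary topological space, since a directed supremum for the specialization order need not be a topological limit. I would therefore argue through the subspace topology directly, exactly as in Lemma~\ref{lemma:prohorov:top}: the sets $p_k^{-1} (V)$ ($V$ open in $X_k$) form a base, so any $\vec x \in U$ has some $p_k^{-1} (V) \subseteq U$ with $x_k \in V$; then $p_k (e_k (p_k (\vec x))) = x_k \in V$ by $p_k \circ e_k = \identity{X_k}$, whence $e_k (p_k (\vec x)) \in p_k^{-1} (V) \subseteq U$ and $\vec x \in {(e_k \circ p_k)}^{-1} (U)$. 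The remaining parts of item~5 are routine: the preimages are open by items~1 and~2; each is contained in $U$ because $e_i \circ p_i \leq \identity X$ and open sets are upward closed; and the net is monotone because $e_i \circ p_i \leq e_j \circ p_j$ (item~4) and $U$ is upward closed.
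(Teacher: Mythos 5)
Your proposal is correct and takes essentially the same approach as the paper: the same explicit coordinatewise construction $e_i (x)_j \eqdef p_{jk} (e_{ik} (x))$ with the same independence-of-$k$ verification, and the same basic-open-set argument for the union in item~5 (which the paper obtains by citing Lemma~\ref{lemma:prohorov:top}, item~3, rather than redoing it). The only variations are minor: in item~4 the paper shows the supremum is attained componentwise (at index $i = m$ in coordinate $m$) where you apply $p_j$ to a competing upper bound, and your explicit check that $e_i (x)$ lands in $X$ is a detail the paper leaves implicit.
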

\begin{proof}
  1 is clear.

  2. For every $x \in X_i$, for every $j \in I$, we build the $j$th
  component $e_i (x)_j$ of $e_i (x)$ by first finding an index
  $k \in I$ such that $i, j \sqsubseteq k$, using the fact that $I$ is
  directed, and by letting $e_i (x)_j \eqdef p_{jk} (e_{ik} (x))$.
  Let us check that this is independent of the chosen $k$.  If we had
  chosen another $\ell \in I$ above $i$ and $j$, then we must show
  that $p_{jk} (e_{ik} (x)) = p_{j\ell} (e_{i\ell} (x))$.  Pick yet
  another element $m \in I$, this time above $k$ and $\ell$.  We have
  $p_{jm} (e_{im} (x)) = p_{jk} (p_{km} (e_{km} (e_{ik} (x))))$ by
  condition (iv) of ep-systems, and that is equal to
  $p_{jk} (e_{ik} (x))$ by the definition of ep-pairs.  By the same
  argument, with $\ell$ replacing $k$,
  $p_{jm} (e_{im} (x)) = p_{j\ell} (e_{i\ell} (x))$, so
  $p_{jk} (e_{ik} (x)) = p_{j\ell} (e_{i\ell} (x))$.

  In order to show that $e_i$ is continuous, it is enough to show that
  $x \mapsto e_i (x)_j$ is continuous for every $j \in I$.  Picking
  any $k \in I$ above $i$ and $j$, that amounts to the continuity of
  $p_{jk} \circ e_{ik}$.

  We compute $p_i (e_i (x))$.  This is $e_i (x)_i$, namely the case
  $j=i$ of the definition.  We take $k=i=j$, so that $e_i (x)_i =
  p_{ii} (e_{ii} (x)) = x$.

  By definition, for every $\vec x \in X$, for every $j \in I$, the
  $j$th component $e_i (p_i (\vec x))_j$ of $e_i (p_i (\vec x))$ is
  $p_{jk} (e_{ik} (x_i))$, where $k$ is any element of $I$ above $i$
  and $j$.  Since $\vec x$ is in $X$, $x_i$ is equal to
  $p_{ik} (x_k)$, so
  $e_i (p_i (\vec x))_j = p_{jk} (e_{ik} (p_{ik} (x_k)))$.  Since
  $e_{ik} \circ p_{ik} \leq \identity {X_k}$, and since $p_{jk}$,
  being continuous, is monotonic,
  $e_i (p_i (\vec x))_j \leq p_{jk} (x_k) = x_j$.  It follows that
  $e_i \circ p_i \leq \identity X$.

  3. Let $x \in X_i$, and $i \sqsubseteq j$.  For every $m \in I$, the
  $m$th component of $e_j (e_{ij} (x))$ is
  $p_{mk} (e_{jk} (e_{ij} (x)))$, where $k$ is any element of $I$
  above $m$ and $j$.  That is equal to $p_{mk} (e_{ik} (x))$, hence to
  $e_i (x)$.

  4. It suffices to show that, for every $m \in I$, the family
  ${(e_i (x_i)_m)}_{i \in I, \sqsubseteq}$ is a monotone net in $X_m$,
  and has $x_m$ as supremum.

  Assume $i \sqsubseteq j$.  We wish to show that
  $e_i (x_i)_m \leq e_j (x_j)_m$.  In order to do so, we pick some
  $k \in I$ above $j$, and $m$ (hence also above $i$).  Then
  $e_{ik} (x_i) = e_{jk} (e_{ij} (x_i))$.  Since $\vec x$ is in $X$,
  $x_i = p_{ij} (x_j)$, so $e_{ij} (x_i) \leq x_j$.  The map $e_{jk}$
  is continuous hence monotonic, so $e_{ik} (x_i) \leq e_{jk} (x_j)$.
  It follows that
  $e_i (x_i)_m = p_{mk} (e_{ik} (x_i)) \leq p_{mk} (e_{jk} (x_j)) =
  e_j (x_j)_m$, this time using the fact that $p_{mk}$ is continuous
  hence monotonic.

  For every $i \in I$, picking $k$ above $i$ and $m$, we have
  $e_i (x_i)_m = p_{mk} (e_{ik} (x_i))$, and the same argument as
  above specialized to $j \eqdef k$ yields
  $e_i (x_i)_m \leq p_{mk} (e_{kk} (x_k)) = p_{mk} (x_k) = x_m$.
  Hence $x_m$ is an upper bound of the directed family
  ${(e_i (x_i))_m}_{i \in I}$.  It remains to show that it is the
  least upper bound.  That is easy, since the least upper bound is in
  fact attained: for $i \eqdef m$,
  $e_i (x_i)_m = e_m (x_m)_m = p_{mk} (e_{mk} (x_m)) = x_m$.

  5. If $i \sqsubseteq j$, we claim that
  $(e_i \circ p_i)^{-1} (U) \subseteq (e_j \circ p_j)^{-1} (U)$.  For
  every $\vec x \in (e_i \circ p_i)^{-1} (U)$,
  $(e_i \circ p_i) (\vec x)$ is in $U$ and below
  $(e_j \circ p_j) (\vec x)$ by item~4.  Since open sets are
  upwards-closed, $\vec x$ is also in $(e_j \circ p_j)^{-1} (U)$.

  We profit from Lemma~\ref{lemma:prohorov:top}, item~3, and we write
  $U$ as $\dcup_{i \in I} p_i^{-1} (U_i)$, where each
  $U_i \eqdef p_i^* (U)$ is open in $X_i$.  For every $\vec x \in U$,
  there is an $i \in I$ such that $p_i (\vec x) \in U_i$.  Since
  $p_i \circ e_i = \identity {X_i}$,
  $p_i (\vec x) = p_i (e_i (p_i (\vec x)))$, and this shows that
  $e_i (p_i (\vec x))$ is in $p_i^{-1} (U_i)$, hence in $U$.
  Therefore $\vec x$ is in $(e_i \circ p_i)^{-1} (U)$.  We conclude
  that $U \subseteq \dcup_{i \in I} (e_i \circ p_i)^{-1} (U)$.

  In the converse direction, for every $i \in I$ and every $\vec x \in
  (e_i \circ p_i)^{-1} (U)$, $(e_i \circ p_i) (\vec x)$ is below $\vec
  x$ (by item~4) and in $U$, so $\vec x$ is in $U$.
\end{proof}

\begin{theorem}[Projective limits, ep-system case]
  \label{thm:prohorov:ep}
  Given an ep-system in $\Topcat$ consisting of spaces $X_i$,
  $i \in I$, and ep-pairs $(p_{ij}, e_{ij})$, $i \sqsubseteq j$ in
  $I$, let $X, {(p_i)}_{i \in I}$ be a projective limit of that
  ep-system.

  Assume that, for every $i \in I$, there is a continuous valuation
  $\nu_i$ on $X_i$, and that for all $i \sqsubseteq j$ in $I$, $\nu_i
  = p_{ij} [\nu_j]$.  Then there is a unique continuous valuation
  $\nu$ on $X$ such that, for every $i \in I$, $\nu_i = p_i [\nu]$.
\end{theorem}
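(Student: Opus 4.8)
The plan is to produce one explicit candidate for $\nu$, check that it is a continuous valuation essentially for free, and then read off uniqueness from Proposition~\ref{prop:prohorov:unique}. Uniqueness is immediate: a continuous valuation is in particular Scott-continuous, so Proposition~\ref{prop:prohorov:unique} already guarantees at most one Scott-continuous $\nu$ with $p_i [\nu] = \nu_i$ for all $i$. Everything therefore reduces to \emph{building} one such $\nu$.

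The key choice is to define $\nu$ through the embeddings $e_i$ of Lemma~\ref{lemma:epsystem} rather than through the right adjoints $p_i^*$ of Lemma~\ref{lemma:prohorov:top}; the latter would force us to fight the fact that right adjoints do not preserve directed unions, whereas the embeddings make Scott-continuity and modularity fall out automatically. Concretely I set
\[
  \nu(U) \eqdef \dsup_{i \in I} e_i [\nu_i] (U) = \dsup_{i \in I} \nu_i (e_i^{-1} (U)),
  \qquad U \in \Open X .
\]
First I would note that each $e_i [\nu_i]$ is a continuous valuation on $X$, since $e_i$ is continuous (Lemma~\ref{lemma:epsystem}, item~2) and the image of a continuous valuation by a continuous map is again a continuous valuation. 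Next I would check that ${(e_i [\nu_i])}_{i \in I}$ is a \emph{monotone net} of valuations for the pointwise order: for $i \sqsubseteq j$, using $e_i = e_j \circ e_{ij}$ (item~3) and $\nu_i = p_{ij} [\nu_j]$, one rewrites $\nu_i (e_i^{-1} (U)) = \nu_j ((e_{ij} \circ p_{ij})^{-1} (e_j^{-1} (U)))$, and since $e_{ij} \circ p_{ij} \leq \identity{X_j}$ and $e_j^{-1} (U)$ is open hence upward closed, $(e_{ij} \circ p_{ij})^{-1} (e_j^{-1} (U)) \subseteq e_j^{-1} (U)$, so this is $\leq \nu_j (e_j^{-1} (U))$. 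Granting this, $\nu$ is a directed pointwise supremum of continuous valuations, and such a supremum is again a continuous valuation: strictness and monotonicity are clear, modularity holds because addition is Scott-continuous and the two nets are indexed by the same directed set, and Scott-continuity holds by swapping the two suprema (each $e_i [\nu_i]$ being Scott-continuous). Thus $\nu$ is a continuous valuation with no further work.

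The one computation that needs care — and which I expect to be the main obstacle — is the compatibility $p_i [\nu] = \nu_i$. For $W \in \Open{X_i}$ I would compute $p_i [\nu] (W) = \dsup_{j \in I} \nu_j ((p_i \circ e_j)^{-1} (W))$, and observe that for every $j \sqsupseteq i$ one has $p_i \circ e_j = p_{ij} \circ p_j \circ e_j = p_{ij}$, so the $j$th term equals $p_{ij} [\nu_j] (W) = \nu_i (W)$. Since $\{ j \mid j \sqsupseteq i \}$ is cofinal in the directed set $I$ and the net is monotone, the supremum over all $j$ agrees with the supremum over this cofinal set, which is the constant $\nu_i (W)$; hence $p_i [\nu] (W) = \nu_i (W)$. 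This identity $p_i \circ e_j = p_{ij}$ for $j \sqsupseteq i$, together with the cofinality argument, is really the crux; the rest is bookkeeping. Having produced a continuous valuation $\nu$ with $p_i [\nu] = \nu_i$, uniqueness from Proposition~\ref{prop:prohorov:unique} finishes the proof.
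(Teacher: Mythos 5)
Your proposal is correct and follows essentially the same route as the paper: the paper defines the very same candidate $\nu(U) \eqdef \dsup_{i \in I} \nu_i (e_i^{-1}(U))$ and establishes directedness of that family by the same ep-pair argument (using $e_i = e_j \circ e_{ij}$, $e_{ij} \circ p_{ij} \leq \identity{X_j}$, and upward closure of open sets), then concludes with Proposition~\ref{prop:prohorov:unique}. The only divergences are bookkeeping: the paper obtains modularity by citing the second part of Proposition~\ref{prop:prohorov:unique} rather than your direct ``directed supremum of continuous valuations'' argument, while you spell out the compatibility $p_i[\nu] = \nu_i$ (via $p_i \circ e_j = p_{ij}$ for $j \sqsupseteq i$ plus cofinality of $\upc i$)---a check the paper's appeal to that proposition tacitly relies on but does not write out, so making it explicit is a point in your favor rather than a deviation.
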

\begin{proof}
  Up to isomorphism, we may assume that $X$ is the canonical
  projective limit
  $\{\vec x \in \prod_{i \in I} X_i \mid \forall i \sqsubseteq j, x_i
  = p_{ij} (x_j)\}$.  For every open subset $U$ of $X$, we use
  Lemma~\ref{lemma:epsystem}, item~5, and this leads us to define
  $\nu (U)$ as $\dsup_{i \in I} \nu_i (e_i^{-1} (U))$.

  We check that the family ${(\nu_i (e_i^{-1} (U)))}_{i \in I}$ is
  directed.  In order to do so, we show that for all $i \sqsubseteq j$
  in $I$, $\nu_i (e_i^{-1} (U)) \leq \nu_j (e_j^{-1} (U))$.  For every
  $x \in p_{ij}^{-1} (e_i^{-1} (U))$, $e_i (p_{ij} (x))$ is in $U$.
  It is equal to $e_j (e_{ij} (p_{ij} (x)))$ by
  Lemma~\ref{lemma:epsystem}, item~3, hence is below $e_j (x)$ by
  item~2 of the same lemma and the fact that $e_j$, being continuous,
  is monotonic.  Since $U$ is upwards-closed, $e_j (x)$ is in $U$.
  This shows that $x$ is in $e_j^{-1} (U)$.  Therefore
  $p_{ij}^{-1} (e_i^{-1} (U)) \subseteq e_j^{-1} (U)$, and hence
  $\nu_i (e_i^{-1} (U)) = p_{ij} [\nu_j] (e_i^{-1} (U)) = \nu_j
  (p_{ij}^{-1} (e_i^{-1} (U))) \leq \nu_j (e_j^{-1} (U))$.

  Let us verify that $\nu$ is Scott-continuous from $\Open X$ to
  $\creal$.  It is clear that $\nu$ is monotonic.  Let
  ${(U_j)}_{j \in J}$ be a directed family of open subsets of $X$, and
  let $U$ be its union.  Then
  $e_i^{-1} (U) = \dcup_{j \in J} e_i^{-1} (U_j)$ for every $i \in I$,
  and since $\nu_i$ is itself Scott-continuous,
  $\nu (U) = \dsup_{i \in I} \dsup_{j \in J} \nu_i (e_i^{-1} (U_j))$.
  This is equal to
  $\dsup_{j \in J} \dsup_{i \in I} \nu_i (e_i^{-1} (U_j)) = \dsup_{j
    \in J} \nu (U_j)$.

  By the second part of Proposition~\ref{prop:prohorov:unique}, $\nu$
  is a continuous valuation.  It is unique by the first part of the
  same proposition.
\end{proof}

\section{A Daniell-Kolmogorov theorem}
\label{sec:dani-kolm-theor}

Let $X_i$, $i \in I$, be a family of topological spaces (resp.,
measurable spaces).  For every finite subset $J$ of $I$, one can form
the finite product $X_J \eqdef \prod_{i \in J} X_i$, and define bonding maps
$p_{JJ'} \colon X_{J'} \to X_J$ for all $J \subseteq J'$ in $\Pfin
(I)$ by $p_{JJ'} ({(x_i)}_{i \in J'}) \eqdef {(x_i)}_{i \in J}$.  Then
$\prod_{i \in I} X_i$ is a projective limit of the projective system
we have just defined, together with the maps $p_J \colon \vec x \in
\prod_{i \in X_i} \to {(x_i)}_{i \in J}$, $J \in \Pfin (I)$.

We first deal with a very particular case, where every $X_i$ is
\emph{pointed}, namely, has a least element with respect to $\leq$.
\begin{proposition}
  \label{prop:kolmogorov:pointed}
  Let ${(X_i)}_{i \in I}$ be a family of pointed topological spaces,
  $X_J \eqdef \prod_{i \in J} X_i$ for every $J \in \Pfin (I)$,
  $p_{JJ'} \colon {(x_i)}_{i \in J'} \in X_{J'} \to {(x_i)}_{i \in J}
  \in X_J$ for all $J \subseteq J'$ in $\Pfin (I)$,
  $X \eqdef \prod_{i \in I} X_i$, and
  $p_J \colon \vec x \in X \to {(x_i)}_{i \in J} \in X_J$ for every
  $J \in \Pfin (I)$.

  For every family of continuous valuations $\nu_J$ on $X_J$,
  $J \in \Pfin (I)$, such that $\nu_J = p_{JJ'} [\nu_{J'}]$ for all
  $J \subseteq J'$ in $\Pfin (I)$, there is a unique continuous
  valuation $\nu$ on $X$ such that $p_J [\nu] = \nu_J$ for every
  $J \in \Pfin (I)$.
\end{proposition}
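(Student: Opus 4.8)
The plan is to recognize this as a direct instance of the ep-system theorem, Theorem~\ref{thm:prohorov:ep}. The crucial observation is that pointedness of each $X_i$ turns every bonding map $p_{JJ'}$ into the projection part of an ep-pair. First I would, for each $i \in I$, write $\bot_i$ for the least element of $X_i$ with respect to $\leq$, and define embeddings $e_{JJ'} \colon X_J \to X_{J'}$ for all $J \subseteq J'$ in $\Pfin (I)$ by sending ${(x_i)}_{i \in J}$ to the tuple ${(y_i)}_{i \in J'}$ with $y_i \eqdef x_i$ for $i \in J$ and $y_i \eqdef \bot_i$ for $i \in J' \setminus J$; that is, one fills the missing coordinates with least elements.

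Next I would check that each $(p_{JJ'}, e_{JJ'})$ is an ep-pair. Continuity of $e_{JJ'}$ is clear, since each of its component maps $X_J \to X_i$ is either a coordinate projection (for $i \in J$) or a constant map to $\bot_i$ (for $i \in J' \setminus J$), and both are continuous. The equality $p_{JJ'} \circ e_{JJ'} = \identity {X_J}$ is immediate, because projecting back onto the $J$-coordinates recovers the original tuple. For $e_{JJ'} \circ p_{JJ'} \leq \identity {X_{J'}}$, I would note that this composite, applied to ${(x_i)}_{i \in J'}$, keeps the coordinates in $J$ and replaces those in $J' \setminus J$ by $\bot_i$; since the specialization preordering on a product is the coordinatewise one and $\bot_i \leq x_i$ for every $i$, the result lies below the original tuple.

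Then I would verify the ep-system axioms over the directed poset $(\Pfin (I), \subseteq)$. Condition (iii), $e_{JJ} = p_{JJ} = \identity {X_J}$, holds trivially, and the coordinate projections already satisfy $p_{JJ'} \circ p_{J'J''} = p_{JJ''}$. The only point needing a line of checking is $e_{J'J''} \circ e_{JJ'} = e_{JJ''}$ for $J \subseteq J' \subseteq J''$: composing the two embeddings fills the coordinates in $J' \setminus J$ with least elements and then those in $J'' \setminus J'$ with least elements, which is exactly filling $J'' \setminus J$ with least elements. This produces a genuine ep-system in $\Topcat$.

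Finally, the opening paragraph of this section already exhibits $X = \prod_{i \in I} X_i$ together with the maps $p_J$ as a projective limit of the underlying projective system of this ep-system. I would therefore invoke Theorem~\ref{thm:prohorov:ep} directly: the compatibility hypothesis $\nu_J = p_{JJ'} [\nu_{J'}]$ is exactly what that theorem requires, and it yields the unique continuous valuation $\nu$ on $X$ with $p_J [\nu] = \nu_J$ for every $J \in \Pfin (I)$. I expect the only real work to be the ep-pair and ep-system verifications above; they are routine, and the substance of the argument is the single idea that pointedness supplies the embeddings $e_{JJ'}$ free of charge.
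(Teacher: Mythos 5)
Your proposal is correct and is essentially the paper's own proof: the paper likewise defines the embeddings $e_{JJ'}$ by filling the coordinates in $J' \setminus J$ with least elements, observes that the pairs $(p_{JJ'}, e_{JJ'})$ form an ep-system, and concludes by Theorem~\ref{thm:prohorov:ep}. The only difference is that you spell out the routine ep-pair and ep-system verifications that the paper leaves to the reader.
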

\begin{proof}
  We write $\bot$ for least elements of $X_i$, whatever $i \in I$ is
  chosen.
  For all $J \subseteq J'$ in $\Pfin (I)$, define
  $e_{JJ'} \colon X_J \to X_{J'}$ so that the $j$th component of
  $e_{JJ'} (\vec x)$ is $x_j$ if $j \in J$, and $\bot$ otherwise.
  Each pair $(p_{JJ'}, e_{JJ'})$ is an ep-pair, and when
  $J \subseteq J'$ varies, they form an ep-system.  We conclude by
  Theorem~\ref{thm:prohorov:ep}.
%
%
\end{proof}

In order to deal with the general case, we require the notion of
support of a continuous valuation $\nu$ on a space $X$.
\begin{proposition}
  \label{prop:supp}
  Let $X$ be a topological space, $A$ be a subset of $X$,
  $\eta \colon A \to X$ be the inclusion map, and let $\nu$ be a
  valuation on $X$.  The following are equivalent:
  \begin{enumerate}
  \item for all open $U, V \in \Open X$ such that
    $U \cap A = V \cap A$, $\nu (U) = \nu (V)$;
  \item $\nu$ is \emph{supported on $A$} (a.k.a., $A$ is a
    \emph{support} of $\nu$), namely, there is a valuation $\mu$ on
    $A$ such that $\nu = \eta [\mu]$.
  \end{enumerate}
  In that case, the valuation $\mu$ in item~2 is unique,
  and characterized by the formula $\mu (U \cap A) = \nu (U)$ for
  every open subset $U$ of $X$.

  Furthermore, if $\nu$ is continuous, then $\mu$ is continuous.
\end{proposition}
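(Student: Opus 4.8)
The plan is to prove the two implications of the equivalence separately, then to verify the uniqueness and the continuity claims, which I expect to be the only genuinely delicate points.

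First I would prove $(2) \Rightarrow (1)$, which is immediate: if $\nu = \eta [\mu]$, then for every open $U \in \Open X$ we have $\nu (U) = \mu (\eta^{-1} (U)) = \mu (U \cap A)$, since the inverse image of $U$ under the inclusion $\eta$ is exactly $U \cap A$. Hence if $U \cap A = V \cap A$, then $\nu (U) = \mu (U \cap A) = \mu (V \cap A) = \nu (V)$. This computation also already pins down the candidate for $\mu$ and explains the characterizing formula $\mu (U \cap A) = \nu (U)$: the open subsets of $A$ are precisely the sets of the form $U \cap A$ with $U \in \Open X$, so the formula, if well defined, determines $\mu$ completely, giving uniqueness for free.

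The substance of the proof is $(1) \Rightarrow (2)$, where I would use condition~(1) precisely to show that the formula $\mu (U \cap A) \eqdef \nu (U)$ gives a \emph{well-defined} map on $\Open A$: if two opens $U, V$ of $X$ cut out the same open $U \cap A = V \cap A$ of $A$, condition~(1) says $\nu (U) = \nu (V)$, so $\mu$ does not depend on the chosen representative $U$. It then remains to check that this $\mu$ is a valuation on $A$. Strictness follows by taking $U = \emptyset$; monotonicity follows because if $U \cap A \subseteq V \cap A$ then $U \cap A = (U \cup V) \cap A$, so $\mu (U \cap A) = \nu (U \cup V) \geq \nu (U)$ by well-definedness and monotonicity of $\nu$—actually more directly one compares $\mu (U \cap A) = \nu(U)$ with $\mu(V \cap A) = \nu(V)$ after replacing $U$ by $U \cup V$ which cuts out the same set as $V$; and modularity transfers from the identity $\nu (U \cup V) + \nu (U \cap V) = \nu (U) + \nu (V)$ upon intersecting each open with $A$ and using that intersection and union of opens commute with $\cap A$. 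Finally $\nu = \eta [\mu]$ holds by the defining formula, and the uniqueness of $\mu$ follows as above since every open of $A$ is $U \cap A$ for some $U$.

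For the continuity claim, suppose $\nu$ is continuous and let ${(W_k)}_{k \in K}$ be a directed family of open subsets of $A$ with union $W$. Each $W_k$ equals $U_k \cap A$ for some open $U_k$ of $X$, but the family ${(U_k)}_{k \in K}$ need not be directed in $\Open X$, which is the main obstacle: I cannot simply push the directed supremum through $\nu$. The fix is to replace each $U_k$ by the union $U_k' \eqdef \bigcup_{k' \sqsubseteq k} U_{k'}$ along a monotone net indexing the directed family, or more cleanly to pass to the right-adjoint construction. Concretely, I would argue that $\mu$ is itself determined by a Scott-continuous operation: for open $W$ of $A$, among all opens $U$ of $X$ with $U \cap A = W$ one may take the largest such $U$ (the union of all of them), which depends monotonically on $W$ and sends directed unions to directed unions, and on which $\nu$—being Scott-continuous—preserves the supremum. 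Combining this with well-definedness of $\mu$ yields $\mu (\dcup_k W_k) = \dsup_k \mu (W_k)$, establishing Scott-continuity of $\mu$.
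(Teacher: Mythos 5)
Your main line of argument is correct and is essentially the paper's own proof: $(2)\Rightarrow(1)$ is the same one-line computation; $(1)\Rightarrow(2)$ proceeds by showing that $\mu (U \cap A) \eqdef \nu (U)$ is well defined and is a valuation (your union trick for monotonicity, $\mu (V \cap A) = \nu (U \cup V) \geq \nu (U) = \mu (U \cap A)$, is the mirror image of the paper's intersection trick $\mu (U \cap A) = \nu (U \cap V) \leq \nu (V) = \mu (V \cap A)$); and your first fix for Scott-continuity---reindexing the directed family so that it becomes a monotone net and replacing each representative $U_k$ by $U'_k \eqdef \bigcup_{k' \sqsubseteq k} U_{k'}$---is exactly the paper's construction of its sets $V_i$. (One slip: your first formulation of monotonicity, ``$U \cap A = (U \cup V) \cap A$'', is false as stated, since that set equals $V \cap A$; but your immediate self-correction is the right argument.)

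One genuine error deserves flagging, though it sits only in your ``more cleanly'' alternative: the largest open $\sigma (W)$ of $X$ with $\sigma (W) \cap A = W$ is indeed well defined and monotone in $W$, but it does \emph{not} send directed unions to directed unions. Take $X \eqdef \real$, $A \eqdef (0,1)$, and $W_k \eqdef (1/k, 1)$ for $k \geq 2$. Then $\sigma (W_k) = \real \diff [0, 1/k]$, so $\dcup_{k} \sigma (W_k) = \real \diff \{0\}$, whereas $\sigma (\bigcup_k W_k) = \sigma (A) = \real$. Hence Scott-continuity of $\sigma$ fails (right adjoints preserve infima, not directed suprema), and you cannot simply pass $\nu$ through $\sigma$ of the union. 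The route is nevertheless repairable, and your closing phrase ``combining this with well-definedness'' is precisely the key: monotonicity of $\sigma$ alone makes ${(\sigma (W_k))}_k$ directed, and its union has trace $\bigcup_k W_k$ on $A$, so well-definedness of $\mu$ gives $\mu (\bigcup_k W_k) = \nu (\dcup_k \sigma (W_k)) = \dsup_k \nu (\sigma (W_k)) = \dsup_k \mu (W_k)$, using Scott-continuity of $\nu$ only on the directed family ${(\sigma (W_k))}_k$. Note that in the example above, $\real$ and $\real \diff \{0\}$ have the same trace on $A$, so condition~(1) forces them to have the same $\nu$-value---exactly what the repair exploits. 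With that correction, the right-adjoint variant is a legitimate, and arguably tidier, repackaging of the paper's reindexing argument.
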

\begin{proof}
  2 implies 1: for all $U, V \in \Open X$ such that
  $U \cap A = V \cap A$,
  $\nu (U) = \mu (U \cap A) = \mu (V \cap A) = \nu (V)$.

  1 implies 2: define a function $\mu$ from $\Open A$ to $\creal$ by
  $\mu (U \cap A) \eqdef \nu (U)$.  If an element of $\Open A$ can be
  written both as $U \cap A$ and as $V \cap A$, for two open sets
  $U, V \in \Open X$, then $\nu (U) = \nu (V)$, so the values
  $\mu (U \cap A)$ and $\mu (V \cap A)$ are the same, by item~1,
  showing that our definition is not ambiguous.

  Clearly, $\mu$ is strict.  For monotonicity, assume $U \cap A
  \subseteq V \cap A$.  It may not be the case that $U$ is included in
  $V$.  However, replacing $U$ by $U \cap V$ yields an element of
  $\Open X$ that has the same intersection with $A$, so $\mu (U \cap A) =
  \mu ((U \cap V) \cap A) = \nu (U \cap V) \leq \nu (V) = \mu (V \cap
  A)$.

  As far as modularity is concerned,
  $\mu ((U \cap A) \cup (V \cap A)) + \mu ((U \cap A) \cap (V \cap A))
  = \mu ((U \cup V) \cap A) + \mu ((U \cap V) \cap A) = \nu (U \cup V)
  + \nu (U \cap V) = \nu (U) + \nu (V) = \mu (U \cap A) + \mu (V \cap
  A)$.

  Let us assume that $\nu$ is Scott-continuous.  We prove that $\mu$
  is Scott-continuous as follows.  Let ${(U_i \cap A)}_{i \in I}$ be a
  directed family of open subsets of $A$.  As for monotonicity, the
  family ${(U_i)}_{i \in I}$ of open subsets of $X$ may fail to be
  directed.  For $i, j \in I$, let $j \preceq i$ if and only if
  $U_j \cap A \subseteq U_i \cap A$, and replace each $U_i$ by the
  open subset $V_i = \bigcup_{j \preceq i} U_j$.  Since $j \preceq i$
  implies $V_j \subseteq V_i$, the family ${(V_i)}_{i \in I}$ is
  directed.  Moreover,
  $V_i \cap A = \bigcup_{j \preceq i} (U_j \cap A) \subseteq U_i \cap
  A$ (by the definition of $\preceq$) $\subseteq V_i \cap A$ (by the
  definition of $V_i$), so $V_i \cap A = U_i \cap A$.  We can now
  conclude:
  $\mu (\bigcup_{i \in I} (U_i \cap A)) = \mu (\bigcup_{i \in I} (V_i
  \cap A)) = \nu (\dcup_{i \in I} V_i) = \dsup_{i \in I} \nu (V_i) =
  \dsup_{i \in I} \mu (V_i \cap A)$.
\end{proof}

The following proof will use supports in a crucial way.  It will also
rely on a construction of spaces that are never Hausdorff: for a
topological space $X$, let its \emph{lift} $X_\bot$ be $X$ plus an
extra point $\bot$; the open subsets of $X_\bot$ are those of $X$,
plus $X_\bot$.  The specialization preordering of $X_\bot$ is given by
$x \leq y$ (in $X_\bot$) if and only if $x=\bot$, or $x$ and $y$ are
in $X$ and $x \leq y$ in $X$.

\begin{theorem}[\`a la Daniell-Kolmogorov]
  \label{thm:kolmogorov}
  Let ${(X_i)}_{i \in I}$ be a family of topological spaces,
  $X_J \eqdef \prod_{i \in J} X_i$ for every $J \in \Pfin (I)$,
  $p_{JJ'} \colon {(x_i)}_{i \in J'} \in X_{J'} \to {(x_i)}_{i \in J}
  \in X_J$ for all $J \subseteq J'$ in $\Pfin (I)$,
  $X \eqdef \prod_{i \in I} X_i$, and
  $p_J \colon \vec x \in X \to {(x_i)}_{i \in J} \in X_J$ for every
  $J \in \Pfin (I)$.

  For every family of continuous valuations $\nu_J$ on $X_J$,
  $J \in \Pfin (I)$, such that $\nu_J = p_{JJ'} [\nu_{J'}]$ for all
  $J \subseteq J'$ in $\Pfin (I)$, there is a unique continuous
  valuation $\nu$ on $X$ such that $p_J [\nu] = \nu_J$ for every
  $J \in \Pfin (I)$.
 \end{theorem}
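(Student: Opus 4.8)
The plan is to reduce the statement to the pointed case (Proposition~\ref{prop:kolmogorov:pointed}) by lifting every factor, and then to descend the resulting valuation from the lifted product back to $X$ by means of the support machinery of Proposition~\ref{prop:supp}. The valuation axioms and uniqueness will then come for free from Proposition~\ref{prop:prohorov:unique}, so that the whole difficulty gets concentrated in a single step: checking that the lifted valuation is supported on $X$. Concretely, I would set $Y_i \eqdef (X_i)_\bot$, $Y_J \eqdef \prod_{i \in J} Y_i$ and $Y \eqdef \prod_{i \in I} Y_i$, so that each $Y_i$ is pointed. Since each $X_i$ is open in $Y_i$ and $J$ is finite, $X_J$ is open in $Y_J$, with inclusion a topological embedding $\eta_J \colon X_J \hookrightarrow Y_J$; write also $\eta \colon X \hookrightarrow Y$. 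I would push the data forward, letting $\mu_J \eqdef \eta_J [\nu_J]$, a continuous valuation on $Y_J$ that is supported on $X_J$. A short computation, using that the bonding maps of the $Y$-system restrict to those of the $X$-system and that $\mu_J(V) = \nu_J(V \cap X_J)$, shows that the compatibility $\nu_J = p_{JJ'}[\nu_{J'}]$ transports to $\mu_J = p_{JJ'}[\mu_{J'}]$ (writing $p_{JJ'}$, $p_J$, $e_J$ also for the bonding maps, projections and $\bot$-padding embeddings of the $Y$-system). Proposition~\ref{prop:kolmogorov:pointed}, equivalently Theorem~\ref{thm:prohorov:ep}, then yields a unique continuous valuation $\mu$ on $Y$ with $p_J[\mu] = \mu_J$ for every $J$.

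Assume for a moment that $\mu$ is supported on $X$. Proposition~\ref{prop:supp} then provides a continuous valuation $\nu$ on $X$ with $\mu = \eta[\nu]$ and $\nu(U \cap X) = \mu(U)$ for every open $U$ of $Y$. To check $p_J[\nu] = \nu_J$, I would fix an open $W$ of $X_J$, pick an open $\widetilde W$ of $Y_J$ with $\widetilde W \cap X_J = W$, and compute $\nu(p_J^{-1}(W)) = \nu(p_J^{-1}(\widetilde W) \cap X) = \mu(p_J^{-1}(\widetilde W)) = \mu_J(\widetilde W) = \nu_J(W)$, using $p_J^{-1}(\widetilde W) \cap X = p_J^{-1}(W)$ (on the $X$-side), $\mu = \eta[\nu]$, and $\mu_J = \eta_J[\nu_J]$. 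Uniqueness is then immediate from the first part of Proposition~\ref{prop:prohorov:unique}. When some $X_i$ is empty, compatibility forces every $\nu_J$ to vanish and $\nu \eqdef 0$ works; so I may assume every $X_i$, hence $X$, nonempty, which makes every projection $p_J \colon X \to X_J$ surjective.

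The crux is therefore that $\mu$ is supported on $X$, i.e. (Proposition~\ref{prop:supp}, item~1) that $\mu(U) = \mu(V)$ whenever $U \cap X = V \cap X$. Replacing $U, V$ by $U \cap V$, this reduces to proving $\mu(V) \le \mu(U)$ for open $U \subseteq V$ with $(V \setminus U) \cap X = \emptyset$. The basic geometric inputs I would use are that $X$ is dense in $Y$ (every $\vec y \in Y$ lies below any point of $X$ extending it, and opens are upwards closed), and that each $X^{\{i\}} \eqdef \{\vec y \in Y : y_i \neq \bot\} = p_{\{i\}}^{-1}(X_i)$ has full mass, $\mu(X^{\{i\}}) = \mu_{\{i\}}(X_i) = \nu_{\{i\}}(X_i) = \mu(Y)$. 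In the bounded case, combining $\mu(B \cup X^{\{i\}}) = \mu(Y)$ with modularity gives $\mu(B) = \mu(B \cap X^{\{i\}})$ for every open $B$; a finite induction over coordinates then yields $\mu(U) = \mu(V)$ as soon as the points of $V \setminus U$ may carry $\bot$ only in coordinates from a fixed finite set $K$, i.e. $V \setminus U \subseteq \bigcup_{i \in K}(Y \setminus X^{\{i\}})$.

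The hard part, which I expect to be the main obstacle, is removing this finiteness: a point of $V \setminus U$ may carry $\bot$ in infinitely many coordinates, and such points cannot be excised by any single open set, so the finite-coordinate reduction does not suffice directly. Here I would work through the explicit ep-limit formula: by Lemma~\ref{lemma:epsystem}(5), $U = \dcup_K (e_K \circ p_K)^{-1}(U)$ and likewise for $V$, whence $\mu(U) = \dsup_K \nu_K(e_K^{-1}(U) \cap X_K)$ and $\mu(V) = \dsup_K \nu_K(e_K^{-1}(V) \cap X_K)$, the suprema ranging over finite $K$. For $\vec x \in X_K$ with $e_K(\vec x) \in V$, every full extension of $\vec x$ lies in $V \cap X = U \cap X \subseteq U$; and since $e_K(\vec x) \nearrow$ that full extension as $K$ grows (Lemma~\ref{lemma:epsystem}(4)) while $U$ is Scott-open, the ``defect'' family $\{\vec x \in X_K : e_K(\vec x) \in V \setminus U\}$ is antitone under the bonding pullbacks and has empty projective limit. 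The remaining, and decisive, point is to show that the $\nu_K$-mass of this defect tends to $0$ — a continuity-from-above statement that I would hope to extract from the Scott-continuity of $\mu$ on $Y$ together with the single-coordinate reduction above (and, in the unbounded case, after localizing to open sets of finite mass), rather than from any compactness or tightness assumption.
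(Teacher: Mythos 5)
Your overall architecture coincides with the paper's: lift each $X_i$ to $X_{i\bot}$, push the $\nu_J$ forward to the lifted finite products, apply Proposition~\ref{prop:kolmogorov:pointed} (via Theorem~\ref{thm:prohorov:ep}) to get a continuous valuation $\mu$ on $Y \eqdef \prod_{i \in I} X_{i\bot}$, show that $\mu$ is supported on $X$, and descend through Proposition~\ref{prop:supp}; your verification that $p_J[\nu]=\nu_J$ and the appeal to Proposition~\ref{prop:prohorov:unique} for uniqueness are also the paper's. The problem is that you have not proved the one step on which, as you say yourself, all the difficulty is concentrated: that $\mu$ is supported on $X$. Your treatment of it ends with a statement you ``hope to extract''---that the $\nu_K$-mass of the defect $\{\vec x \in X_K : e_K(\vec x) \in V\setminus U\}$ tends to $0$---and the route you indicate cannot deliver it. First, these defect sets are differences of opens, not opens, so a valuation does not even assign them a mass. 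Second, and more fundamentally, antitonicity under the bonding maps together with emptiness of the projective limit gives no control on mass: filtered families of sets with empty intersection and mass bounded away from zero are exactly the pathology that compactness/tightness hypotheses exist to rule out (compare the cofinite subsets of $[0,1]$ under Lebesgue measure, whose intersection is empty while every member has measure $1$; and compare the second half of the paper, where precisely this kind of argument is only made to work via Steenrod's theorem). Since you explicitly forgo compactness and tightness, no ``continuity from above'' is available: valuations are Scott-continuous, i.e.\ continuous from below along directed families of \emph{open} sets, and even genuine measures fail continuity from above along uncountable filtered index sets such as $\Pfin(I)$. Finally, your single-coordinate cancellation $\mu(B)=\mu(B\cap X^{\{i\}})$ needs $\mu(Y)<\infty$, whereas the theorem assumes neither boundedness nor local finiteness, so ``localizing to open sets of finite mass'' is not available either.

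The missing idea is the one that lets the paper argue entirely from below. For a basic open $B \eqdef \prod_{i \in I} U_i$ of $Y$, set $\tilde B \eqdef \prod_{i \in I}\tilde U_i$, where $\tilde U_i \eqdef X_{i\bot}$ if $U_i = X_i$ or $U_i = X_{i\bot}$, and $\tilde U_i \eqdef U_i$ otherwise; for open $U$, let $\tilde U$ be the union of the $\tilde B$ over basic $B \subseteq U$. Then $\tilde U$ is the \emph{largest} open subset of $Y$ with the same trace on $X$ as $U$, so $U\cap X = V\cap X$ implies $\tilde U = \tilde V$, and the support claim reduces to the single equality $\mu(U)=\mu(\tilde U)$. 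For a finite union of basic opens, both it and the corresponding finite union of the $\tilde B$'s are cylinders $q_J^{-1}(\cdot)$ over one finite set $J$ of coordinates, with the same trace on $X_J$; hence their $\mu$-masses agree because both are computed by the same finite marginal $\nu_J$, using $q_J[\mu]=\eta_J[\nu_J]$---no cancellation, so no boundedness, is needed. Finally, the finite unions of basic opens inside $U$ form a directed family with union $U$, the corresponding unions of $\tilde B$'s form a directed family with union $\tilde U$, and Scott-continuity of $\mu$ (continuity from below, which you do have) transfers the pairwise equalities to the limit. Your cylinder approximation via Lemma~\ref{lemma:epsystem} is close in spirit, but without the pairing $B \mapsto \tilde B$ it pushes you into the defect/continuity-from-above dead end, which is not repairable under the theorem's hypotheses.
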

\begin{proof}
  It is unique by Proposition~\ref{prop:prohorov:unique}.  Consider
  the lift $X_{i\bot}$ of $X_i$, and form the topological space
  $Y \eqdef \prod_{i \in I} X_{i\bot}$.

  For each $J \in \Pfin (I)$, let also $Y_J$ be
  $\prod_{i \in J} X_{i\bot}$.  The inclusion map from $X_i$ into
  $X_{i\bot}$ is continuous, hence so is the inclusion map $\eta_J$
  from $X_J$ into $Y_J$.  We form the image $\eta_J [\nu_J]$, so
  $\eta_J [\nu_J] (U) \eqdef \nu_J (U \cap X_J)$ for every open subset
  $U$ of $Y_J$.  Let us write $q_{JJ'}$ for the map that sends every
  ${(y_i)}_{i \in J'}$ in $Y_{J'}$ to ${(y_i)}_{i \in J}$ in $Y_J$ for
  all $J \subseteq J'$ in $\Pfin (I)$.  Note that
  $q_{JJ'} \circ \eta_{J'} = \eta_J \circ p_{JJ'}$.  Therefore, for
  all $J \subseteq J'$ in $\Pfin (I)$,
  $q_{JJ'} [\eta_{J'} [\nu_{J'}]] =\eta_J [p_{JJ'} [\nu_{J'}]] =
  \eta_J [\nu_J]$.  Thus we have a projective system
  ${(q_{JJ'} \colon (Y_{J'}, \eta_{J'} [\nu_{J'}]) \to (Y_J, \eta_J
    [\nu_J]))}_{J \subseteq J' \in \Pfin (I)}$ of pointed valued
  spaces.  Let $q_J$ map every $\vec y \in Y$ to
  ${(y_i)}_{i \in J} \in Y_J$, for every $J \in \Pfin (I)$.  By
  Proposition~\ref{prop:kolmogorov:pointed}, there is a probability
  valuation $\tilde\nu$ on $Y$ such that, for every $J \in \Pfin (I)$,
  $q_J [\tilde\nu] = \eta_J [\nu_J]$.

  We claim that $\tilde\nu$ is supported on
  $X \eqdef \prod_{i \in I} X_i$.  To that end, we first observe that
  for every open subset $U$ of $Y$, there is a largest open subset $V$
  of $Y$ such that $U \cap X = V \cap X$.  This is just the union of
  all the open subsets $V$ such that $U \cap X = V \cap X$, but we can
  characterize it as follows.

  For every basic open subset $B \eqdef \prod_{i \in I} U_i$ of $Y$,
  let $\tilde B \eqdef \prod_{i \in I} \tilde U_i$, where
  $\tilde U_i \eqdef X_{i\bot}$ if $U_i=X_i$ or $U_i=X_{i\bot}$, and
  $\tilde U_i \eqdef U_i$ otherwise.  Note that
  $B \cap X = \tilde B \cap X$.  For every open subset $U$ of $Y$,
  define $\tilde U$ as the union of all $\tilde B$ where $B$ ranges
  over the basic open subsets included in $U$.  In particular,
  $U \cap X = \tilde U \cap X$.  We claim that $\tilde U$ is the
  largest open subset $V$ such that $U \cap X = V \cap X$.  Indeed,
  consider any open subset $V$ such that $U \cap X = V \cap X$.  For
  every basic open subset $B \eqdef \prod_{i \in I} U_i$ included in
  $V$, $B \cap X = \prod_{i \in I} (U_i \cap X_i)$ is included in
  $V \cap X = U \cap X$, hence in $U$.  It follows that
  $\widetilde {(B \cap X)}$ is included in $\tilde U$.  But
  $\widetilde {(B \cap X)} = \prod_{i \in I} \widetilde {(U_i \cap
    X_i)}$ contains $B$, since for each $i$, $U_i$ is included in
  $\widetilde {(U_i \cap X_i)}$, as one checks easily.  Hence $B$ is
  included in $\tilde U$.  Since $B$ is arbitrary, $V$ is included in
  $\tilde U$.

  It follows that for any two open subsets $U$ and $V$ of $Y$, if
  $U \cap X = V \cap X$ then $\tilde U = \tilde V$.  Explicitly, among
  all the open subsets whose intersection with $X$ are equal to
  $U \cap X = V \cap X$, $\tilde U$ and $\tilde V$ both are the
  largest.

  Hence, in order to show that $U \cap X = V \cap X$ implies
  $\tilde\nu (U) = \tilde\nu (V)$, it suffices to show that
  $\tilde\nu (U) =\tilde\nu (\tilde U)$ (and symmetrically,
  $\tilde\nu (V) = \tilde\nu (\tilde V)$).  We do this in two steps.

  Step 1.  For every finite union $U \eqdef \bigcup_{k=1}^n B_k$ where
  each $B_k$ is a basic open subset of $Y$, let us write $B_k$ as
  $\prod_{i \in I} U_{ki}$, where each $U_{ki}$ is open in $X_{i\bot}$
  and $U_{ki} = X_{i\bot}$ for every $i \in I$ outside some finite set
  $J_k$.  Let $J \eqdef J_1 \cup \cdots \cup J_n$, so that
  $U_{ki} = X_{i\bot}$ for every $i \in I \diff J$ and every $k$,
  $1\leq k\leq n$.  Then $U = q_J^{-1} (V)$ where
  $V \eqdef \bigcup_{k=1}^n \prod_{i \in J} U_{ki}$ (note that $i$ now
  ranges over $J$ instead of $I$), and $\bigcup_{k=1}^n \tilde B_k = q_J^{-1} (W)$ where
  $W \eqdef \bigcup_{k=1}^n \prod_{i \in J} \tilde U_{ki}$.  Since
  $U_{ki}$ and $\tilde U_{ki}$ contain the same points from $X_i$,
  $\eta_J^{-1} (\prod_{i \in J} \tilde U_{ki}) = \eta_J^{-1} (\prod_{i
    \in J} U_{ki})$, for every $k$, so
  $\eta_J^{-1} (V) = \eta_J^{-1} (W)$.

  Recall that $q_J [\tilde\nu] = \eta_J [\nu_J]$.  Therefore
  $\tilde\nu (U) = \tilde\nu (q_J^{-1} (V)) = q_J [\tilde\nu] (V) =
  \eta_J [\nu_J] (V) = \nu_J (\eta_J^{-1} (V))$, and similarly
  $\tilde\nu (\bigcup_{k=1}^n \tilde B_k) = \nu_J (\eta_J^{-1} (W))$.
  Since $\eta_J^{-1} (V) = \eta_J^{-1} (W)$,
  $\tilde\nu (U) = \tilde\nu (\bigcup_{k=1}^n \tilde B_k)$.

  Step 2.  For general open subsets $U$ of $Y$, recall that $\tilde U$
  is the union of all sets $\tilde B$ where $B$ ranges over the basic
  open subsets included in $U$.  Organize the finite unions of such
  sets $B$ as a directed family ${(U_j)}_{j \in J}$.  Then
  $U = \dcup_{j \in J} U_j$.  Also, for each $j \in J$, if $U_j$ is a
  finite union $\bigcup_{k=1}^n B_k$, where each $B_k$ is a basic open
  subset, then let $V_j \eqdef \bigcup_{k=1}^n \tilde B_k$.  By
  construction, $\tilde U = \dcup_{i \in I} V_i$.  By step~2,
  $\tilde\nu (V_j) = \tilde\nu (U_j)$.  Since $\tilde\nu$ is
  continuous,
  $\tilde\nu (\tilde U) = \dsup_{j \in J} \tilde\nu (V_j) = \dsup_{j
    \in J} \tilde\nu (U_j) = \tilde\nu (U)$.

  This finishes to show that $\tilde\nu$ is supported on $X$.  Hence
  there is a continuous valuation $\nu$ on $X$, with the subspace
  topology from $Y$, such that $\nu (U \cap X) = \tilde\nu (U)$ for
  every open subset $U$ of $Y$, by Proposition~\ref{prop:supp}.  It is
  easy to see that the subspace topology on $X$ coincides with the
  product topology.  Let $\eta$ be the inclusion map from $X$ into
  $Y$.  Then $\eta [\nu] = \tilde\nu$.  For every $J \in \Pfin (I)$,
  $q_J [\tilde\nu] = \eta_J [\nu_J]$, so $\eta_J [\nu_J]$ is equal to
  $q_J [\eta [\nu]] = \eta_J [p_J [\nu]]$.  We now note that $X_i$ is
  open in $X_{i\bot}$ for every $i$, so $X_J$ is an open rectangle in
  $Y_J$.  It follows that every open subset $U$ of $X_J$ is also open
  in $Y_J$, so $\eta_J^{-1} (U) = U$.  Therefore
  $\nu_J (U) = \nu_J (\eta_J^{-1} (U)) = \eta_J [\nu_J] (U) = \eta_J
  [p_J [\nu]] = p_J [\nu] (\eta_J^{-1} (U)) = p_J [\nu] (U)$.  Since
  $U$ is arbitrary, $p_J [\nu] = \nu_J$.
\end{proof}

\section{Tightness and uniform tightness}
\label{sec:tightn-unif-tightn}

We now attack the general form of Problem~\ref{pb:prohorov}, looking
only for tight valuations, a notion we now define.

For every compact saturated subset $Q$ of a space $X$, we write
$\blacksquare Q$ for the family of open neighborhoods of $Q$.  We also
write $\Smyth X$ for the set of all compact saturated subsets of $X$.

For every map $\nu \colon \Open X \to \creal$, let $\nu^\bullet$ map
every $Q \in \Smyth X$ to $\finf_{U \in \blacksquare Q} \nu (U)$.
That construction appears as the $\nu^\dagger$ construction in
\cite{Tix:bewertung}, and as the $\nu^*$ construction in
\cite{KL:measureext}.  This is a strict, monotonic map that is not in
general modular or Scott-continuous.

Dually, we write $\Box U$ for the family of all compact saturated
subsets of the open set $U$, and we define $\mu^\circ (U)$ as
$\dsup_{Q \in \Box U} \mu (Q)$, for any function
$\mu \colon \Smyth X \to \creal$.

A bounded measure $\mu$ on a Hausdorff topological space $X$ is
\emph{tight} if and only if for every $\epsilon > 0$ there is a
compact set $K$ such that $\mu (K) > \mu (X) - \epsilon$.  In
non-Hausdorff spaces, compact subsets need not be measurable, and we
use $\nu^\bullet$ to measure compact subsets instead of $\nu$.  Hence
we define the following.  Here $\ll$ is the way-below relation on
$\creal$; we have $r \ll s$ if and only if $r=0$ or $r < s$.
\begin{definition}
  \label{defn:inner:regular}
  Let $X$ be a topological space.  A map $\nu$ from $\Open X$ to
  $\creal$ is \emph{tight} if and only if, for every open subset $U$
  of $X$ and every $r \ll \nu (U)$, there is a compact saturated
  subset $Q$ of $X$ such that $Q \subseteq U$ and $r \leq \nu^\bullet (Q)$.
\end{definition}

Given a family ${(Q_i)}_{i \in I}$ of compact saturated subsets of a
space $X$, $\bigcup_{i \in I} \blacksquare Q_i$ is always a Scott-open
family in $\Open X$.  A space is \emph{consonant} if and only if every
Scott-open family in $\Open X$ is of this form.  Equivalently, if and
only if for every Scott-open family $\mathcal F$ of open subsets of
$X$, for every $U \in \mathcal F$, there is a compact saturated subset
$Q$ of $X$ such that $Q \subseteq U$ and
$\blacksquare Q \subseteq \mathcal F$.

Every locally compact space is consonant, and the
\emph{Dolecki-Greco-Lechicki theorem} states that all regular \v
Cech-complete spaces are consonant \cite{DGL:consonant}.  The latter
include all complete metric spaces, even not locally compact, in their
open ball topology.

The following lemma says that tight maps are the same thing as
Scott-continuous maps, provided $X$ is consonant.
\begin{lemma}
  \label{lemma:inner:regular}
  Let $X$ be a topological space, and $\nu$ be a map from $\Open X$ to
  $\creal$.
  \begin{enumerate}
  \item If $\nu$ is tight, then $\nu$ is Scott-continuous.
  \item $\nu$ is tight if and only if it is of the form $\mu^\circ$
    for some map $\mu \colon \Smyth X \to \creal$;
  \item $\nu$ is tight if and only if $\nu = \nu^{\bullet\circ}$---if
    and only if $\nu \leq \nu^{\bullet\circ}$, the converse inequality
    being true for every map $\nu$.
  \item If $X$ is consonant, and $\nu$ is Scott-continuous, then $\nu$
    is tight.
  \end{enumerate}
\end{lemma}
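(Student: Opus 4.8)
The plan is to isolate two elementary monotonicity facts about the operators $(-)^\bullet$ and $(-)^\circ$, deduce item~3 from them, obtain items~1 and~2 as formal consequences, and treat item~4 separately through consonance.

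First I would record two inequalities valid for \emph{every} map. For any $\nu \colon \Open X \to \creal$ and any $Q \in \Smyth X$ with $Q \subseteq U$, the open set $U$ lies in $\blacksquare Q$, so $\nu^\bullet (Q) = \finf_{W \in \blacksquare Q} \nu (W) \leq \nu (U)$; taking the supremum over $Q \in \Box U$ yields $\nu^{\bullet\circ} \leq \nu$, which is the ``converse inequality'' asserted in item~3. Dually, for any $\mu \colon \Smyth X \to \creal$ and any $Q \in \Smyth X$, whenever $Q \subseteq W$ with $W$ open we have $Q \in \Box W$, so $\mu^\circ (W) \geq \mu (Q)$, and taking the infimum over $W \in \blacksquare Q$ gives $\mu \leq \mu^{\circ\bullet}$. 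I would also note that $(-)^\circ$ is monotone on maps, and that $\mu^\circ$ is always Scott-continuous: monotonicity is clear, and for a directed union $U = \dcup_i U_i$ every $Q \in \Box U$ is compact, hence contained in some $U_i$, so $\mu (Q) \leq \mu^\circ (U_i)$ and therefore $\mu^\circ (U) \leq \dsup_i \mu^\circ (U_i)$.

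With these in hand, item~3 reduces to a short argument about the way-below relation on $\creal$, using $\nu (U) = \dsup \{ r \mid r \ll \nu (U) \}$. If $\nu$ is tight, then for each $r \ll \nu (U)$ tightness yields $Q \subseteq U$ with $r \leq \nu^\bullet (Q) \leq \nu^{\bullet\circ} (U)$, so $\nu \leq \nu^{\bullet\circ}$, and with the converse inequality this gives $\nu = \nu^{\bullet\circ}$. Conversely, if $\nu \leq \nu^{\bullet\circ}$, then for $r \ll \nu (U)$ either $r = 0$, witnessed by $Q = \emptyset$, or $r < \nu (U) \leq \dsup_{Q \in \Box U} \nu^\bullet (Q)$ forces some $Q \subseteq U$ with $r < \nu^\bullet (Q)$, which is exactly tightness. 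Item~2 then follows formally: the forward direction takes $\mu \eqdef \nu^\bullet$ and uses $\nu = \nu^{\bullet\circ} = (\nu^\bullet)^\circ$ from item~3, while the backward direction observes that $\nu = \mu^\circ$ gives $\nu^{\bullet\circ} = \mu^{\circ\bullet\circ} \geq \mu^\circ = \nu$, by applying the monotone operator $(-)^\circ$ to $\mu \leq \mu^{\circ\bullet}$, so $\nu$ is tight by item~3. Item~1 is immediate too, since a tight $\nu$ equals $(\nu^\bullet)^\circ$, which is Scott-continuous by the observation above.

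The main work, and the step I expect to be the genuine obstacle, is item~4, where consonance must be invoked. Given a Scott-continuous $\nu$, an open $U$, and $r \ll \nu (U)$, I would form $\mathcal F \eqdef \{ V \in \Open X \mid r \ll \nu (V) \}$ and verify it is Scott-open: upward-closedness uses monotonicity of $\nu$ with the fact that $r \ll s \leq s'$ implies $r \ll s'$ in $\creal$, and inaccessibility by directed suprema uses Scott-continuity of $\nu$ together with $r \ll \dsup_i \nu (V_i)$ implying $r \ll \nu (V_i)$ for some $i$. Since $U \in \mathcal F$, consonance provides a compact saturated $Q \subseteq U$ with $\blacksquare Q \subseteq \mathcal F$; then every open neighbourhood $W$ of $Q$ satisfies $r \ll \nu (W)$, hence $r \leq \nu (W)$, so $r \leq \finf_{W \in \blacksquare Q} \nu (W) = \nu^\bullet (Q)$, which is tightness. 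The delicate points are solely in confirming that $\mathcal F$ is genuinely Scott-open and in arranging the definition so that $\blacksquare Q \subseteq \mathcal F$ transfers the bound $r$ through the infimum defining $\nu^\bullet (Q)$.
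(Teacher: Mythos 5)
Your proof is correct, and its mathematical core coincides with the paper's: both arguments rest on the inequality $\nu^{\bullet\circ} \leq \nu$ valid for all maps, both establish item~3 by the same manipulation of $r \ll \nu(U)$ using $\nu(U) = \sup\{r \mid r \ll \nu(U)\}$, and your item~4 is the paper's argument essentially verbatim (the Scott-open family $\nu^{-1}(\uuarrow r)$ together with consonance). The difference is one of decomposition. The paper proves item~1 directly from the definition of tightness (monotonicity, then Scott-continuity via compactness of the witness $Q$), and proves that every map of the form $\mu^\circ$ is tight by a direct verification; you instead isolate two general lemmas---that $\mu^\circ$ is Scott-continuous for \emph{every} $\mu \colon \Smyth X \to \creal$, and that $(-)^\circ$ is monotone together with the unit inequality $\mu \leq \mu^{\circ\bullet}$---and then obtain item~1 and the backward direction of item~2 as formal corollaries of item~3. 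This Galois-connection-style factoring is slightly more modular: item~1 becomes a one-line consequence, and the tightness of $\mu^\circ$ falls out of applying $(-)^\circ$ to $\mu \leq \mu^{\circ\bullet}$ rather than being checked by hand. The key compactness argument (a compact set inside a directed union of opens lies inside one of them) appears in both proofs, just housed in different lemmas---in yours it proves Scott-continuity of $\mu^\circ$, in the paper's it proves item~1 directly. Nothing is missing: your treatment of the $r = 0$ case via $Q = \emptyset$, and your check that $\uuarrow r$ is inaccessible by directed suprema in $\creal$, cover the points the paper leaves implicit.
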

\begin{proof}
  1. We first show that $\nu$ is monotonic.  Let $U \subseteq V$ be
  open subsets of $X$.  For every $r \ll \nu (U)$, there is a compact
  saturated set $Q \subseteq U$ such that
  $r \leq \nu^\bullet (Q) \leq \nu (V)$.  Taking suprema over $r$, we
  obtain $\nu (U) \leq \nu (V)$.

  Next, let ${(U_i)}_{i \in I}$ be a directed family of open subsets
  of $X$, and let $U$ be its union.  The inequality $\dsup_{i \in I}
  \nu (U_i) \leq \nu (U)$ is immediate from the fact that $\nu$ is
  monotonic.  Conversely, for every $r \ll \nu (U)$, we find a compact
  saturated set $Q \subseteq U$ such that $r \leq \nu (V)$ for every
  open neighborhood $V$ of $Q$.  Since $Q$ is compact, $Q \subseteq
  U_i$ for some $i \in I$, so $r \leq \nu (U_i)$.  It follows that $r
  \leq \dsup_{i \in I} \nu (U_i)$, and, by taking suprema over $r$,
  that $\nu (U) \leq \dsup_{i \in I} \nu (U_i)$.

  2 and 3.  We first show that any map of the form $\mu^\circ$ is
  tight.  For every open subset $U$ of $X$, and every
  $r \ll \mu^\circ (U)$, by definition there is a $Q \in \Box U$ such
  that $r \leq \mu (Q)$.  For every open neighborhood $V$ of $Q$,
  $\mu (Q) \leq \mu^\circ (V)$, so $r \leq \mu^\circ (V)$.  That shows
  the if direction of item~2.

  We proceed with item~3.  In one direction, we show that
  $\nu^{\bullet\circ} \leq \nu$, for every $\nu$.  Let $U$ be open in
  $X$.  For every $Q \subseteq U$, $\nu^\bullet (Q) \leq \nu (U)$, by
  definition of $\nu^\bullet$.  Then
  $\nu^{\bullet\circ} (U) = \dsup_{Q \in \Box U} \nu^\bullet (Q) \leq
  \dsup_{Q \in \Box U} \nu (U) = \nu (U)$.  (The last equality holds
  because the supremum is non-empty.  It is non-empty because we can
  take the empty set for $Q$.)

  Rewriting the definition slightly, $\nu$ is tight if and only if,
  for every $U \in \Open X$, for every $r \ll \nu (U)$, there is a
  $Q \in \Box U$ such that $r \leq \nu^\bullet (Q)$.

  If that is so, then for every $r \ll \nu (U)$, $r$ is less than or
  equal to
  $\sup_{Q \in \Box U} \nu^\bullet (Q) = \nu^{\bullet\circ} (U)$.  By
  taking suprema over $r$, $\nu (U) \leq \nu^{\bullet\circ} (U)$, and
  we have seen that the converse inequality always holds.

  Conversely, if $\nu = \nu^{\bullet\circ}$, i.e., if $\nu =
  \mu^\circ$ where $\mu \eqdef \nu^\bullet$, then we have seen that
  $\nu$ is tight.  This shows item~3.  We complete the proof of item~2
  by noting that if $\nu$ is tight, by item~3 $\nu$ is of the form
  $\mu^\circ$ where $\mu \eqdef \nu^\bullet$.

  4. Let $X$ be consonant and $\nu$ be Scott-continuous. Let also
  $U \in \Open X$ and $r \ll \nu (U)$.  The family
  $\mathcal F \eqdef \nu^{-1} (\uuarrow r) = \{V \in \Open X \mid r
  \ll \nu (V)\}$ contains $U$ and is Scott-open.  Since $X$ is
  consonant, there is a $Q \in \Box U$ such that
  $\blacksquare Q \subseteq \mathcal F$.  The latter inclusion means
  that for every open subset $V$ such that $Q \subseteq V$, $r \ll \nu
  (V)$.  In particular, $r \leq \nu (V)$, and since $V$ is arbitrary,
  $r \leq \nu^\bullet (Q)$.
\end{proof}

The next lemma gives a necessary condition for
Problem~\ref{pb:prohorov} to have a tight solution.  In the proof, we
use the following easily proved fact.
\begin{fact}
  \label{fact:pQ:U}
  For every continuous map $p \colon X \to Y$ between topological
  spaces, for every compact saturated subset $Q$ of $X$, $\upc p [Q]$
  is compact saturated in $Y$.  For every open subset $U$ of $Y$,
  $\upc p [Q] \subseteq U$ if and only if $Q \subseteq p^{-1} (U)$.  \qed
\end{fact}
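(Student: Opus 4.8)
The plan is to treat the two assertions of the Fact in turn, each being a direct consequence of the definitions of compactness and of the specialization preordering. I expect no genuine obstacle; the one device used repeatedly is that open sets are upward closed with respect to the specialization preordering, and this is what lets compactness and containment transfer freely between $p[Q]$ and its saturation $\upc p[Q]$.

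For the first assertion I would argue in two stages. First, $p[Q]$ is compact in $Y$, as the continuous image of the compact set $Q$: given a directed family ${(U_i)}_{i \in I}$ of open subsets of $Y$ with $p[Q] \subseteq \dcup_{i \in I} U_i$, the sets $p^{-1}(U_i)$ are open by continuity of $p$ and form a directed family covering $Q$, so compactness of $Q$ yields a single index $i$ with $Q \subseteq p^{-1}(U_i)$, whence $p[Q] \subseteq U_i$. Second, I pass to the saturation $\upc p[Q]$. It is saturated by construction, being upward closed. For its compactness I use that open sets are upward closed: any directed open cover of $\upc p[Q]$ already covers $p[Q]$, so by the compactness just established some single open $U_i$ of the cover contains $p[Q]$, and since $U_i$ is upward closed it then contains $\upc p[Q]$ as well. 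Hence $\upc p[Q]$ is compact saturated.

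For the second assertion I would prove the two inclusions directly. If $\upc p[Q] \subseteq U$, then a fortiori $p[Q] \subseteq U$, so every $q \in Q$ has $p(q) \in U$, that is, $q \in p^{-1}(U)$, giving $Q \subseteq p^{-1}(U)$. Conversely, if $Q \subseteq p^{-1}(U)$ then $p[Q] \subseteq U$, and since $U$ is open hence upward closed, taking upward closures yields $\upc p[Q] \subseteq U$. The whole statement thus reduces to elementary bookkeeping, the only nontrivial ingredient being the appeal to upward closure of opens, which is precisely why it is flagged as easily proved.
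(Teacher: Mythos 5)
Your proof is correct, and since the paper states this Fact with its proof omitted (it is flagged as ``easily proved'' and closed with a \qed), your argument supplies exactly the intended reasoning: compactness of $p[Q]$ via preimages of a directed open cover, and transfer of both compactness and containment between $p[Q]$ and $\upc p[Q]$ using the fact that open sets are upward closed under the specialization preordering. No gaps; this is the standard argument the paper leaves to the reader.
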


\begin{lemma}
  \label{lemma:prohorov:onlyif}
  Let ${(p_{ij} \colon X_j \to X_i)}_{i \sqsubseteq j \in I}$ be a
  projective system of topological spaces, and $X, {(p_i)}_{i \in I}$
  be a projective limit of that system.  Let
  $\nu \colon \Open X \to \creal$ be a tight map, and
  $\nu_i \eqdef p_i [\nu]$ for every $i \in I$.

  Then the family ${(\nu_i)}_{i \in I}$ satisfies the following
  condition of \emph{uniform tightness}: for every $i \in I$, for
  every open subset $U$ of $X_i$ and every $r \ll \nu_i (U)$, there is
  a compact saturated subset $Q$ of $X$ such that
  $\upc p_i [Q] \subseteq U$ and such that for every $j \in I$,
  $r \leq \nu_j^\bullet (\upc p_j [Q])$.

  In particular, every $\nu_i$ is tight.
\end{lemma}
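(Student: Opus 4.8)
The plan is to produce, for the given data $i \in I$, $U \in \Open {X_i}$, and $r \ll \nu_i (U)$, a \emph{single} compact saturated subset $Q$ of the projective limit $X$ that witnesses uniform tightness simultaneously for all $j \in I$. The essential point is that the required uniformity is not something to be engineered index by index: it falls out automatically once $Q$ is chosen inside $X$, because a bound on $\nu$ over all open neighborhoods of $Q$ in $X$ automatically bounds each $\nu_j$ over all open neighborhoods of the projection $\upc p_j [Q]$.

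First I would translate the hypothesis through the defining relation $\nu_i = p_i [\nu]$: since $\nu_i (U) = \nu (p_i^{-1} (U))$, the assumption $r \ll \nu_i (U)$ is exactly $r \ll \nu (p_i^{-1} (U))$. Applying the tightness of $\nu$ (Definition~\ref{defn:inner:regular}) to the open set $p_i^{-1} (U)$ of $X$ then yields a compact saturated subset $Q$ of $X$ with $Q \subseteq p_i^{-1} (U)$ and $r \leq \nu^\bullet (Q)$. By Fact~\ref{fact:pQ:U} applied to $p_i$, the inclusion $Q \subseteq p_i^{-1} (U)$ is equivalent to $\upc p_i [Q] \subseteq U$, which is the first demanded property.

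The heart of the argument is the uniform bound. Fix any $j \in I$; I must show $r \leq \nu_j^\bullet (\upc p_j [Q]) = \finf_{V \in \blacksquare (\upc p_j [Q])} \nu_j (V)$, so it suffices to check $r \leq \nu_j (V)$ for an arbitrary open neighborhood $V$ of $\upc p_j [Q]$ in $X_j$. By Fact~\ref{fact:pQ:U} applied to $p_j$, the inclusion $\upc p_j [Q] \subseteq V$ translates into $Q \subseteq p_j^{-1} (V)$, so $p_j^{-1} (V)$ is an open neighborhood of $Q$ in $X$. Hence $r \leq \nu^\bullet (Q) \leq \nu (p_j^{-1} (V)) = p_j [\nu] (V) = \nu_j (V)$, where the middle inequality is just the definition of $\nu^\bullet$ as an infimum over $\blacksquare Q$. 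Taking the infimum over all such $V$ gives $r \leq \nu_j^\bullet (\upc p_j [Q])$, completing the verification of uniform tightness. The final clause, that each $\nu_i$ is tight, is then immediate by specializing $j \eqdef i$: the set $Q_i \eqdef \upc p_i [Q]$ is compact saturated in $X_i$ by Fact~\ref{fact:pQ:U}, satisfies $Q_i \subseteq U$, and obeys $r \leq \nu_i^\bullet (Q_i)$, which is precisely Definition~\ref{defn:inner:regular} for $\nu_i$.

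I do not expect a genuine obstacle here; the lemma is essentially a transport of tightness through the adjunction-like equivalence $\upc p [Q] \subseteq U$ iff $Q \subseteq p^{-1} (U)$ of Fact~\ref{fact:pQ:U}. The only things to be careful about are to choose $Q$ once and for all inside $X$ rather than separately in each $X_j$---this is exactly what makes the resulting bound uniform over $j$---and to keep track of the direction of the inequalities when unfolding $\nu^\bullet$ as an infimum of $\nu$-values over open neighborhoods.
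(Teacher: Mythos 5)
Your proof is correct and follows essentially the same route as the paper's: apply tightness of $\nu$ to $p_i^{-1}(U)$ to get $Q$, then use Fact~\ref{fact:pQ:U} to convert neighborhoods of $\upc p_j[Q]$ in $X_j$ into neighborhoods of $Q$ in $X$, yielding the uniform bound, and specialize $j \eqdef i$ for the tightness of $\nu_i$.
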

\begin{proof}
  Since $\nu$ is tight, for every $i \in I$, for every open subset $U$
  of $X_i$, for every $r \ll \nu_i (U) = \nu (p_i^{-1} (U))$, there is
  a compact saturated subset $Q$ of $X$ such that
  $Q \subseteq p_i^{-1} (U)$ and $r \leq \nu^\bullet (Q)$.  By
  Fact~\ref{fact:pQ:U}, $\upc p_i [Q]$ is compact, saturated, and
  included in $U$.  Since $r \leq \nu^\bullet (Q)$, for every open
  subset $V$ of $X$ that contains $Q$, we must have $r \leq \nu (V)$.
  In particular, for every $j \in I$, for every open subset $W$ of
  $X_j$, if $\upc p_j [Q] \subseteq W$, then
  $Q \subseteq p_j^{-1} (W)$ by Fact~\ref{fact:pQ:U}, so
  $r \leq \nu (p_j^{-1} (W)) = \nu_j (W)$.  Taking infima over all
  open neighborhoods $W$ of $\upc p_j [Q]$,
  $r \leq \nu_j^\bullet (\upc p_j [Q])$.

  Taking $j \eqdef i$, we obtain that if $r \ll \nu_i (U)$ then there
  is a compact saturated subset $Q$ of $X$ such that
  $r \leq \nu_i^\bullet (\upc p_i [Q])$.  Hence there is a compact
  saturated subset $Q_i \eqdef \upc p_i [Q]$ of $X_i$ such that
  $r \leq \nu_i^\bullet (Q_i)$, showing that $\nu_i$ is tight.
\end{proof}

The uniform tightness condition is somewhat impenetrable.  The
following gives both a more synthetic condition.
\begin{lemma}
  \label{lemma:unif:tight}
  Let ${(p_{ij} \colon X_j \to X_i)}_{i \sqsubseteq j \in I}$ be a
  projective system of topological spaces, and $X, {(p_i)}_{i \in I}$
  be a projective limit of that system.  Let also $\nu_i$ be
  Scott-continuous maps from $\Open {X_i}$ to $\creal$, for each
  $i \in I$, and assume that for all $i \sqsubseteq j$ in $I$,
  $\nu_i = p_{ij} [\nu_j]$.

  Define $\mu \colon \Smyth X \to \creal$ by
  $\mu (Q) \eqdef \finf_{i \in I} \nu_i^\bullet (\upc p_i [Q])$.  Then
  $p_i [\mu^\circ] \leq \nu_i$ for every $i \in I$, and the following
  are equivalent:
  \begin{enumerate}
  \item ${(\nu_i)}_{i \in I}$ is uniformly tight;
  \item for every $i \in I$, $\nu_i \leq p_i [\mu^\circ]$;
  \item for every $i \in I$, $\nu_i = p_i [\mu^\circ]$.
  \end{enumerate}
\end{lemma}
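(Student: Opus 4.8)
The plan is to derive the equivalences from the stated inequality $p_i [\mu^\circ] \leq \nu_i$, which I establish first. Fix $i \in I$ and an open subset $U$ of $X_i$; then $p_i [\mu^\circ] (U) = \mu^\circ (p_i^{-1} (U)) = \dsup_{Q \in \Box p_i^{-1} (U)} \mu (Q)$, so it suffices to show $\mu (Q) \leq \nu_i (U)$ for every compact saturated $Q \subseteq p_i^{-1} (U)$. Specializing the defining infimum of $\mu$ to $j \eqdef i$ gives $\mu (Q) \leq \nu_i^\bullet (\upc p_i [Q])$; and by Fact~\ref{fact:pQ:U}, $Q \subseteq p_i^{-1} (U)$ is equivalent to $\upc p_i [Q] \subseteq U$, so $U$ is an open neighborhood of $\upc p_i [Q]$ and therefore $\nu_i^\bullet (\upc p_i [Q]) \leq \nu_i (U)$. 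Taking the supremum over $Q$ gives $p_i [\mu^\circ] (U) \leq \nu_i (U)$, hence $p_i [\mu^\circ] \leq \nu_i$. This already settles two of the implications: (3) trivially implies (2), and (2) combined with $p_i [\mu^\circ] \leq \nu_i$ forces equality, i.e.\ (3). So (2) and (3) are equivalent, and it remains only to connect them with uniform tightness.

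For (1)~$\Rightarrow$~(2), assume ${(\nu_i)}_{i \in I}$ is uniformly tight, and fix $i$, an open $U \subseteq X_i$, and $r \ll \nu_i (U)$. Uniform tightness yields a compact saturated $Q$ of $X$ with $\upc p_i [Q] \subseteq U$ and $r \leq \nu_j^\bullet (\upc p_j [Q])$ for \emph{every} $j \in I$. Since $r$ is then a lower bound of the family $\{\nu_j^\bullet (\upc p_j [Q])\}_{j \in I}$, we get $r \leq \finf_{j \in I} \nu_j^\bullet (\upc p_j [Q]) = \mu (Q)$; and $\upc p_i [Q] \subseteq U$ means $Q \in \Box p_i^{-1} (U)$ by Fact~\ref{fact:pQ:U}. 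Hence $p_i [\mu^\circ] (U) = \mu^\circ (p_i^{-1} (U)) \geq \mu (Q) \geq r$, and letting $r$ run over all values way-below $\nu_i (U)$ gives $\nu_i (U) \leq p_i [\mu^\circ] (U)$, which is (2).

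For (2)~$\Rightarrow$~(1), fix $i$, an open $U \subseteq X_i$, and $r \ll \nu_i (U)$; by (2), $\nu_i (U) \leq \mu^\circ (p_i^{-1} (U)) = \dsup_{Q \in \Box p_i^{-1} (U)} \mu (Q)$. If $r = 0$, take $Q \eqdef \emptyset$: then $\upc p_i [\emptyset] = \emptyset \subseteq U$, and since $\blacksquare \emptyset$ is the family of all open sets, $\nu_j^\bullet (\emptyset) = \nu_j (\emptyset) = 0 \geq r$ for every $j$. Otherwise $r < \nu_i (U) \leq \dsup_{Q} \mu (Q)$, so by the definition of supremum there is a compact saturated $Q \subseteq p_i^{-1} (U)$ with $\mu (Q) > r$. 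For this single $Q$ and every $j \in I$ at once we then have $r < \mu (Q) = \finf_{k \in I} \nu_k^\bullet (\upc p_k [Q]) \leq \nu_j^\bullet (\upc p_j [Q])$, and $\upc p_i [Q] \subseteq U$ by Fact~\ref{fact:pQ:U}. This is exactly the uniform tightness condition, completing the cycle (1)~$\Leftrightarrow$~(2)~$\Leftrightarrow$~(3).

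The computations are routine; the one point deserving care is the direction (2)~$\Rightarrow$~(1), where a single compact saturated set $Q$ must witness the tightness inequality \emph{simultaneously for all indices $j$}. This works precisely because $\mu (Q)$ is defined as the infimum over $j$ of $\nu_j^\bullet (\upc p_j [Q])$: a lower bound on $\mu (Q)$ is automatically a lower bound on each $\nu_j^\bullet (\upc p_j [Q])$, so extracting one $Q$ with $\mu (Q) > r$ from the supremum $\mu^\circ (p_i^{-1} (U))$ handles every $j$ uniformly. The only other subtlety is the boundary case $r = 0$, dispatched with the empty compact saturated set.
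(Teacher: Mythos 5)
Your proof is correct, and most of it coincides with the paper's own argument: the inequality $p_i [\mu^\circ] \leq \nu_i$ (specializing the infimum defining $\mu$ to $j = i$ and using Fact~\ref{fact:pQ:U}), the implication $1 \limp 2$, and the equivalence of 2 and 3 are handled exactly as in the paper. The genuine difference is how you close the cycle. The paper proves $3 \limp 1$ by invoking two pieces of machinery: Lemma~\ref{lemma:inner:regular} (every map of the form $\mu^\circ$ is tight) together with Lemma~\ref{lemma:prohorov:onlyif} (the pushforwards of a tight map form a uniformly tight family), applied with $\nu \eqdef \mu^\circ$. You instead prove $2 \limp 1$ directly: since $r < \nu_i (U) \leq \mu^\circ (p_i^{-1} (U))$, some compact saturated $Q \subseteq p_i^{-1} (U)$ satisfies $\mu (Q) > r$, and the definition of $\mu (Q)$ as $\finf_{j \in I} \nu_j^\bullet (\upc p_j [Q])$ then yields $r \leq \nu_j^\bullet (\upc p_j [Q])$ for all $j$ at once. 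This is in effect an inlining of the paper's two cited lemmas, specialized to the case at hand, and it is a small but real simplification: you never need the notion of tightness of $\mu^\circ$, nor the mediating quantity $(\mu^\circ)^\bullet (Q)$. What the paper's route buys in exchange is the explicit conceptual link with Lemma~\ref{lemma:prohorov:onlyif}, i.e., with the necessity direction of its Prohorov-style theorem, and reuse of lemmas already established. Two harmless blemishes in your write-up: the claim $\nu_j (\emptyset) = 0$ in the $r = 0$ case is unjustified, since the $\nu_j$ are only assumed Scott-continuous and not strict, but nothing is lost because every element of $\creal$ is $\geq 0$ anyway; and you skip the paper's preliminary check that the family ${(\nu_i^\bullet (\upc p_i [Q]))}_{i \in I}$ is filtered, which is also harmless, because your argument only uses properties of arbitrary infima, never filteredness.
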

\begin{proof}
  In order to assess that the definition of $\mu$ makes sense, we
  first check that ${(\nu_i^\bullet (\upc p_i [Q]))}_{i \in I}$ is a
  filtered family.  For all $i \sqsubseteq j$ in $I$, every open
  subset $U$ that contains $\upc p_i [Q]$ is such that $p_i^{-1} (U)$
  contains $Q$ by Fact~\ref{fact:pQ:U}, so
  $p_j^{-1} (p_{ij}^{-1} (U))$ contains $Q$.  It follows that
  $p_{ij}^{-1} (U)$ contains $\upc p_j [Q]$.  Therefore
  $\nu_i (U) = p_{ij} [\nu_j] (U) = \nu_j (p_{ij}^{-1} (U)) \geq
  \nu_j^\bullet (\upc p_j [Q])$.  Taking infima over all open
  neighborhoods $U$ of $\upc p_i [Q]$, we obtain that
  $\nu_i^\bullet (\upc p_i [Q]) \geq \nu_j^\bullet (\upc p_j [Q])$.

  Let us show that $p_i [\mu^\circ] \leq \nu_i$.  For every open
  subset $U$ of $X_i$,
  $p_i [\mu^\circ] (U) = \mu^\circ (p_i^{-1} (U)) = \sup_{Q \in \Box
    {p_i^{-1} (U)}} \mu (Q)$.  For every $Q \in \Box {p_i^{-1} (U)}$,
  $\upc p_i [Q]$ is included in $U$ by Fact~\ref{fact:pQ:U}, so by
  definition of $\mu$,
  $\mu (Q) \leq \nu_i^\bullet (\upc p_i [Q]) \leq \nu_i (U)$.

  $1 \limp 2$.  Let $U$ be any open subset of $X_i$, and let
  $r \ll \nu_i (U)$ be arbitrary.  By uniform tightness, there is a
  compact saturated subset $Q$ of $X$ such that
  $\upc p_i [Q] \subseteq U$ and such that for every $j \in I$,
  $r \leq \nu_j^\bullet (\upc p_j [Q])$.  Equivalently,
  $Q \subseteq p_i^{-1} (U)$ and
  $r \leq \inf_{j \in I} \nu_j^\bullet (\upc p_j [Q]) = \mu (Q)$.
  Therefore $r \leq \mu^\circ (p_i^{-1} (U)) = p_i [\mu^\circ] (U)$.  Taking
  suprema over $r$, $\nu_i (U) \leq p_i [\mu^\circ] (U)$.

  $2 \limp 3$ is obvious, considering that the converse inequality
  always holds.

  $3 \limp 1$.  By Lemma~\ref{lemma:inner:regular}, item~1,
  $\mu^\circ$ is tight.  This allows us to apply
  Lemma~\ref{lemma:prohorov:onlyif} with $\nu \eqdef \mu^\circ$, and
  to conclude that ${(\nu_i)}_{i \in I}$ is uniformly tight.
\end{proof}

\begin{remark}
  \label{rem:unif:tight:kripke}
  A family ${(\nu_i)}_{i \in I}$ is uniformly tight if and only if for
  every $i \in I$, for every open subset $U$ of $X_i$ and every
  $r \ll \nu_i (U)$, there is a compact saturated subset $Q$ of $X$
  such that $\upc p_i [Q] \subseteq U$ and such that for every
  $j \in \upc i$, $r \leq \nu_j^\bullet (\upc p_j [Q])$.  In other
  words, we only need to do the final check for indices $j$ above $i$,
  not for all indices $j \in I$.  Indeed, assume that
  $r \leq \nu_j^\bullet (\upc p_j [Q])$ for every $j \in \upc i$.
  Pick any $j \in I$.  By directedness, there is a $k \in I$ above $i$
  and $j$.  Then
  $r \leq \nu_k^\bullet (\upc p_k [Q]) \leq \nu_j^\bullet (\upc p_j
  [Q])$.
\end{remark}

We can now give a solution to Problem~\ref{pb:prohorov} under a
uniform tightness assumption, \`a la Prohorov \cite{Prohorov:projlim}.
\begin{theorem}[\`a la Prohorov]
  \label{thm:prohorov}
  Let
  ${(p_{ij} \colon (X_j, \nu_j) \to (X_i, \nu_i))}_{i \sqsubseteq j
    \in I}$ be a projective system of valued spaces.  Let
  $X, {(p_i)}_{i \in I}$ be a projective limit of the underlying
  projective system
  ${(p_{ij} \colon X_j \to X_i)}_{i \sqsubseteq j \in I}$.

  If the family ${(\nu_i)}_{i \in I}$ is uniformly tight, then there
  is a unique continuous valuation $\nu$ on $X$ such that for every
  $i \in I$, $\nu_i = p_i [\nu]$.  Moreover, $\nu$ is tight.
\end{theorem}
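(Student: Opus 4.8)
The plan is to exhibit the valuation $\nu$ explicitly as $\mu^\circ$, where $\mu \colon \Smyth X \to \creal$ is precisely the map $\mu (Q) \eqdef \finf_{i \in I} \nu_i^\bullet (\upc p_i [Q])$ introduced in Lemma~\ref{lemma:unif:tight}. Almost all of the genuine work has already been packaged into the preceding lemmas, so the proof reduces to assembling them in the right order; the construction is, as promised in the introduction, transparent.

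First I would record that $\nu \eqdef \mu^\circ$ is tight, which is immediate from Lemma~\ref{lemma:inner:regular}, item~2, since $\nu$ is literally of the form $\mu^\circ$. This already settles the final ``Moreover, $\nu$ is tight'' clause. By item~1 of the same lemma, tightness yields Scott-continuity, so $\nu$ is in particular a Scott-continuous map from $\Open X$ to $\creal$.

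Next I would produce the compatibility $\nu_i = p_i [\nu]$ for every $i \in I$. This is exactly the implication $1 \limp 3$ of Lemma~\ref{lemma:unif:tight}: the standing hypothesis that ${(\nu_i)}_{i \in I}$ is uniformly tight gives $\nu_i = p_i [\mu^\circ] = p_i [\nu]$. Note that the companion compatibility $\nu_i = p_{ij} [\nu_j]$ along the bonding maps, needed to invoke that lemma and the proposition below, is built into the notion of a projective system of valued spaces, since each $p_{ij}$ is by definition a morphism in $\Val\Topcat$.

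At this stage I have a Scott-continuous map $\nu$ satisfying $\nu_i = p_i [\nu]$ for all $i$, with each $\nu_i$ a continuous valuation by hypothesis. The second part of Proposition~\ref{prop:prohorov:unique} then upgrades $\nu$ from a mere Scott-continuous map to a genuine continuous valuation, and its first part delivers uniqueness. There is no real obstacle here; the only point requiring a moment's care is recognizing that the single map $\mu^\circ$ must simultaneously serve all three roles---being tight, being Scott-continuous, and pushing forward to each $\nu_i$---which is precisely why the candidate has to be $\mu^\circ$ for the specific $\mu$ of Lemma~\ref{lemma:unif:tight}, and not some ad hoc construction.
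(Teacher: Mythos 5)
Your proposal is correct and follows essentially the same route as the paper's own proof: define $\nu \eqdef \mu^\circ$ for the $\mu$ of Lemma~\ref{lemma:unif:tight}, get compatibility from the uniform-tightness implication of that lemma, tightness from Lemma~\ref{lemma:inner:regular}, and both the upgrade to a continuous valuation and uniqueness from Proposition~\ref{prop:prohorov:unique}. Your explicit remark that tightness yields the Scott-continuity needed to invoke Proposition~\ref{prop:prohorov:unique} is a point the paper leaves implicit, and is a welcome clarification.
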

Lemma~\ref{lemma:unif:tight} states that uniform tightness is a
necessary condition.

\begin{proof}
  Uniqueness is by Proposition~\ref{prop:prohorov:unique}.  Define
  $\mu$ as in Lemma~\ref{lemma:unif:tight}.  The implication
  $1 \limp 2$ there shows that $\nu \eqdef \mu^\circ$ is such that for
  every $i \in I$, $\nu_i = p_i [\nu]$.  By
  Lemma~\ref{lemma:inner:regular}, item~2, $\nu$ is tight, and by the
  second part of Proposition~\ref{prop:prohorov:unique}, $\nu$ is a
  continuous valuation.
\end{proof}

In brief, Theorem~\ref{thm:prohorov} states that
Problem~\ref{pb:prohorov} has a unique solution if
${(\nu_i)}_{i \in I}$ is uniformly tight.  We shall see two canonical
cases where every family of (tight) continuous valuations is uniformly
tight.

\section{Steenrod's theorem}
\label{sec:steendrods-theorem}

Uniform tightness requires one to build a compact saturated subset $Q$
in a projective limit.  Steenrod's theorem
\cite[Theorem~2.1]{Steenrod:univ:homol} will allow us to build $Q$,
as a projective limit of compact spaces.  We only need to assume
sobriety on top of compactness.

Steenrod does not assume sobriety, but his topological spaces are
$T_1$.  The proof of Theorem~2.1 of \cite{Steenrod:univ:homol} seems
to contain a gap, however, as it is claimed that certain images of
compact sets by continuous maps are closed (``Now each $A_1^\beta$ is
closed'', first line of proof of Theorem~2.1).  Repairing this, Stone
establishes theorems on projective limits of compact $T_1$ spaces and
\emph{closed} continuous bonding maps \cite{Stone:limproj:compact},
but those are not the kind that we are interested in.  Instead, we use
the following corrected version of Steenrod's theorem, due to Fujiwara
and Kato \cite[Theorem~2.2.20]{FK:rigid:geometry}.  (The authors state
that the theorem and its proof were suggested by O. Gabber.)

\begin{theorem}[Steenrod's theorem, \cite{FK:rigid:geometry}]
  \label{thm:steenrod}
  The canonical projective limit $Q, {(p_i)}_{i \in I}$ of any
  projective system
  ${(p_{ij} \colon Q_j \to Q_i)}_{i \sqsubseteq j \in I}$ of compact
  sober spaces is compact.  It is non-empty if every $Q_i$ is
  non-empty.  \qed
\end{theorem}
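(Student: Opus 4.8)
The plan is to prove compactness of the canonical projective limit $Q$ by appealing to the Heine-Borel characterization of compactness given in the preliminaries, rather than by an ultrafilter argument, since the spaces need not be Hausdorff. Recall that a subset is compact if and only if every directed open cover has a member containing it. So I would take a directed family ${(W_k)}_{k \in K}$ of open subsets of $\prod_{i \in I} Q_i$ (restricted to $Q$) whose union contains $Q$, and try to show that some single $W_k$ already contains $Q$. The difficulty is that an open subset of the product, restricted to $Q$, is a union of basic open rectangles $p_i^{-1}(U_i)$, and these rectangles constrain only finitely many coordinates; the interaction between the cover and the coherence condition $p_{ij}(x_j) = x_i$ is what makes the statement nontrivial.

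\medskip

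The key idea I would pursue is a \emph{contrapositive / compactness-propagation} argument. Assume no single $W_k$ contains $Q$; I want to produce a point $\vec x \in Q$ lying outside $\dcup_k W_k$, contradicting that the $W_k$ cover $Q$. For each $i \in I$, consider the closed sets $C_{i,k} \eqdef Q_i \setminus p_i^*(W_k)$ (the complements of the largest open subsets of $Q_i$ pulling back inside $W_k$, in the notation of Lemma~\ref{lemma:prohorov:top}). The assumption that $W_k$ does not cover $Q$ should translate, via items 1--3 of Lemma~\ref{lemma:prohorov:top}, into the nonemptiness of a suitable system of compact subsets $Q_i \setminus p_i^*(W_k)$ together with coherence under the bonding maps $p_{ij}$. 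One then wants to run an inductive argument, coordinate by coordinate along a well-ordering of $I$ (or of a cofinal chain), selecting compatible nonempty compact ``fibers'' at each stage, and invoke finite-intersection / well-filteredness arguments to keep the partial selections nonempty in the limit. Because each $Q_i$ is sober, hence well-filtered, filtered intersections of compact saturated sets stay compact and nonempty, which is exactly what is needed to pass to limit stages of the induction.

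\medskip

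More concretely, the approach reduces to the following: for the nonemptiness statement, one shows that the projection of $Q$ into any finite subproduct is nonempty and that these projections are coherent, then assembles a global point by a transfinite or Zorn-type selection using sobriety at each coordinate to guarantee that one can always extend a coherent partial tuple. For the compactness statement proper, one packages a directed open cover of $Q$ as a Scott-open family and uses the fact that failure to be covered by a single member yields, at every stage $i$, a nonempty compact saturated ``remainder'' $R_i \subseteq Q_i$; coherence of the $p_{ij}$ forces these remainders to form a projective system of nonempty compact sober spaces, to which one reapplies the nonemptiness part to extract a witness $\vec x \in Q \setminus \dcup_k W_k$.

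\medskip

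The main obstacle, and the place where sobriety is genuinely used (and where the claimed gap in Steenrod's original $T_1$ argument occurs), is establishing that the relevant images and remainders $R_i$ are \emph{compact saturated} and that their coherent system has nonempty projective limit. In a $T_1$ setting one would want these images to be closed; in the sober setting one instead wants them saturated and leverages well-filteredness to make filtered intersections of compact saturated sets nonempty and compact. The delicate point is the induction along $I$: at a limit stage one must intersect a filtered family of nonempty compact saturated remainders and know the intersection is nonempty, which is precisely where well-filteredness of sober spaces (noted in the preliminaries, ``every sober space is well-filtered,'' and ``in a well-filtered space every filtered intersection of compact saturated subsets is compact saturated'') does the work that closedness did in the $T_1$ case. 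I expect the bookkeeping of coherence across the bonding maps, combined with this filtered-intersection argument, to be the technical heart of the proof.
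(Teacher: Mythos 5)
The paper itself offers no proof of this theorem: it is quoted from Fujiwara and Kato \cite{FK:rigid:geometry} (Theorem~2.2.20, an argument they attribute to Gabber), so your proposal has to be measured against that proof. Your reduction of compactness to non-emptiness is essentially the correct one, and matches both the actual proof and the way the paper later exploits the theorem in Lemma~\ref{lemma:steenrod:open}: if no single $W_k$ contains $Q$, the remainders $R_{i,k} \eqdef Q_i \setminus p_i^*(W_k)$ are non-empty and closed, hence compact, and sober (Lemma~\ref{lemma:QC:sober} with $A \eqdef Q_i$); item~1 of Lemma~\ref{lemma:prohorov:top} shows that $p_{ij}$ maps $R_{j,k}$ into $R_{i,k}$, so the $R_{i,k}$ form a projective system over the directed set $I \times K$, and a point of its limit lies in $Q \setminus \bigcup_{k} W_k$ by item~3 of the same lemma. (One slip here: these remainders are \emph{closed}, i.e.\ downward closed, not ``saturated'' as you write in your third paragraph.)

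The genuine gap is in the non-emptiness part, which is the heart of the theorem. You propose to ``assemble a global point by a transfinite or Zorn-type selection,'' extending coherent partial tuples of \emph{points} coordinate by coordinate, with sobriety guaranteeing each extension and well-filteredness handling limit stages. This is precisely the kind of argument that Henkin's and Waterhouse's examples (recalled in Section~\ref{sec:preliminaries} of the paper) rule out: in Waterhouse's system every finite coherent tuple extends one index at a time, yet no global point exists; nothing in your sketch explains how compactness blocks this failure. Indeed the constraint sets your selection would have to intersect---fibers $p_{ij}^{-1}(x_i)$ of points, or preimages $p_{ij}^{-1}(\upc x_i)$---are in general neither closed nor compact in a non-$T_1$ space, so neither the finite-intersection property nor well-filteredness applies to them. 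The actual proof applies Zorn's lemma to a different object altogether: the coherent systems ${(Z_i)}_{i \in I}$ of \emph{non-empty closed} subsets $Z_i \subseteq Q_i$ with $p_{ij} (Z_j) \subseteq Z_i$, ordered by componentwise reverse inclusion. Chains have upper bounds because a filtered family of non-empty closed subsets of a compact space has non-empty intersection---this is where compactness enters, not well-filteredness. One then shows, using minimality and directedness of $I$, that every member of a minimal system is \emph{irreducible}; sobriety enters only at the very end, supplying the unique generic points $x_i$ of the $Z_i$, which satisfy $p_{ij} (x_j) = x_i$ by $T_0$-uniqueness and minimality, and hence form the desired point of the limit. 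Minimal coherent systems, irreducibility, and generic points are the missing ideas; without them your selection scheme does not go through.
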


\begin{remark}
  \label{rem:steenrod:sober}
  Sobriety is essential in Theorem~\ref{thm:steenrod}, as the
  following example, due to Stone
  \cite[Example~3]{Stone:limproj:compact}, shows.  Let
  ${(p_{mn} \colon X_n \to X_m)}_{m \leq n \in \nat}$ be the following
  projective system.  Each $X_n$ is $\nat$, with a cofinite-like
  topology: its closed subsets $C$ are those such that $C \cap \upc n$
  is finite or equal to the whole of $\upc n$.  The bonding maps
  $p_{mn}$ are all identity maps.  A projective limit of that system
  is $\nat, {(p_m)}_{m \in \nat}$ where $\nat$ has the discrete
  topology and each $p_m$ is again the identity map.  Each $X_n$ is
  compact, in fact Noetherian (and $T_1$, too), but $\nat$ is not
  compact.
\end{remark}

We also need the following technical lemma.
\begin{lemma}
  \label{lemma:QC:sober}
  Let $X$ be a sober space, $A$ be a saturated subset of $X$ and $C$
  be a closed subset of $X$.  Then $A \cap C$, qua subspace, is sober.
\end{lemma}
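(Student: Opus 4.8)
The plan is to verify the two defining properties of sobriety for the subspace $S \eqdef A \cap C$. The $T_0$ property is immediate, since $S$ is a subspace of the $T_0$ space $X$. The real work is to show that every irreducible closed subset $F$ of $S$ is the closure, taken inside $S$, of a unique point of $S$. First I would pass from $F$ to its closure $\overline{F}$ taken in $X$, a closed subset of $X$; as $F$ is nonempty, so is $\overline{F}$.

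Next I would check that $\overline{F}$ is irreducible in $X$. If $\overline{F} \subseteq C_1 \cup C_2$ for closed subsets $C_1, C_2$ of $X$, then $F \subseteq (C_1 \cap S) \cup (C_2 \cap S)$, a union of two closed subsets of $S$; irreducibility of $F$ in $S$ forces $F \subseteq C_1$ or $F \subseteq C_2$, and taking closures in $X$ yields $\overline{F} \subseteq C_1$ or $\overline{F} \subseteq C_2$. By sobriety of $X$, there is then a unique $x \in X$ with $\overline{F} = \dc x$; in particular $x$ is the largest element of $\overline{F}$ with respect to the specialization ordering.

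The crux is to show that this generic point $x$ actually lies in $S = A \cap C$. Since $F \subseteq C$ and $C$ is closed, $\overline{F} \subseteq C$, whence $x \in C$. For membership in $A$ I would use saturation: pick any $f \in F$ (possible as $F$ is nonempty); then $f \in S \subseteq A$, and, since $f \in \overline{F} = \dc x$, we have $f \leq x$. As $A$ is saturated, hence upwards closed, $x \in A$. Therefore $x \in S$. I expect this step---turning ``$f \in A$ and $f \leq x$'' into ``$x \in A$'' via upwards-closedness of the saturated set $A$---to be the one genuinely using the hypotheses on $A$ and $C$, the remainder being formal manipulation of subspace closures.

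Finally I would identify $F$ with the closure of $\{x\}$ in $S$. Because $F$ is closed in $S$, one has $F = \overline{F} \cap S$, while the closure of $\{x\}$ in the subspace $S$ is $\dc x \cap S = \overline{F} \cap S = F$. Thus $F$ is the closure in $S$ of the point $x \in S$, and uniqueness of $x$ follows from the $T_0$ property of $S$. This establishes that $A \cap C$ is sober.
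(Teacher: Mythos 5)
Your proof is correct and follows essentially the same route as the paper's: close the irreducible closed subset $F$ in $X$, transfer irreducibility, obtain the generic point $x$ by sobriety of $X$, place $x$ in $C$ by closedness and in $A$ by upwards-closedness of saturated sets, and identify $F$ with the closure of $\{x\}$ in the subspace. The only cosmetic difference is that you invoke the standard subspace-closure identity $F = \overline{F} \cap S$ as known, whereas the paper verifies it explicitly.
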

\begin{proof}
  We recall that the specialization preordering of $A \cap C$
  coincides with restriction of that of $X$ to $A \cap C$.  In
  particular, there is no ambiguity as to our use of $\leq$.  Also,
  $A \cap C$ is then $T_0$.

  Let $D$ be a closed subset of $A \cap C$, with the subspace
  topology.  Consider the closure $cl (D)$ of $D$ in $X$.  We claim
  that $cl (D) \cap A \cap C = D$.  The inclusion $D \subseteq cl (D)
  \cap A \cap C$ is clear.  In the converse direction, since $D$ is
  closed in the subspace topology, there is a closed subset $C'$ of
  $X$ such that $D = C' \cap A \cap C$.  In particular, $C'$ is closed
  and contains $D$, hence it contains $cl (D)$.  Therefore $D = C'
  \cap A \cap C$ contains $cl (D) \cap A \cap C$.

  Now assume that $D$ is irreducible in $A \cap C$.  We claim that
  $cl (D)$ is irreducible in $X$.  If $cl (D)$ is included in the
  union $C' \cup C''$ of two closed subsets of $X$, then $D$ is
  included in the union of the closed subsets $C' \cap D$ and
  $C'' \cap D$ of $D$, hence is included in one of them, say $C'$.
  Then $D \subseteq C'$, so $cl (D) \subseteq C'$.

  Since $X$ is sober, there is a point $x \in X$ such that $cl
  (D) = \dc x$.  Hence $D = cl (D) \cap A \cap C = \dc x \cap A \cap
  C$.  Pick any point $y$ in $D$.  Then $y$ is in $cl (D) = \dc x$, so
  $y \leq x$.  Since $y$ is in $D \subseteq A$ and $A$ is
  upwards-closed, $x$ is in $A$, too.  Since $D$ is included in $C$
  and $C$ is closed in $X$, $cl (D)  = \dc x$ is also included in $C$,
  so $x$ is in $C$.  Since $x$ is in $A \cap C$, and since $x \in cl
  (D)$, $x$ is in $cl (D) \cap A \cap C = D$.  Moreover, $x$ is
  larger than any point of $cl (D)$, hence of $D$.   It follows that
  $x$ is the largest element of $D$ in $A \cap C$, hence that $D$ is
  the closure of $\{x\}$ in $A \cap C$.
\end{proof}

We shall also require the following, which is the sober non-$T_1$
analogue of Lemma~2.2 in \cite{Steenrod:univ:homol}.  We need to
observe that, given a projective system ${(p_{ij} \colon Q_j \to
  Q_i)}_{i \sqsubseteq j \in I}$, replacing $I$ by a cofinal subset
yields a subsystem with the same projective limit, up to isomorphism,
a fact that we shall reuse later on.
\begin{lemma}
  \label{lemma:steenrod:open}
  Let $Q, {(p_i)}_{i \in I}$ be the canonical projective limit of a
  projective system
  ${(p_{ij} \colon Q_j \to Q_i)}_{i \sqsubseteq j \in I}$ of compact
  sober spaces.  For every $i \in I$, for every open neighborhood $U$
  of $\upc p_i [Q]$ in $Q_i$, there is an index $j \in I$ such that $i
  \sqsubseteq j$ and $\upc p_{ij} [Q_j] \subseteq U$.
\end{lemma}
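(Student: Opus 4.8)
The claim to prove is Lemma~\ref{lemma:steenrod:open}: given the canonical projective limit $Q$ of a projective system of compact sober spaces, for each $i \in I$ and each open neighborhood $U$ of $\upc p_i[Q]$ in $Q_i$, there is some $j \sqsupseteq i$ with $\upc p_{ij}[Q_j] \subseteq U$. Let me think about this carefully.

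The family $\upc p_i[Q]$ is the "shadow" of the projective limit in $Q_i$, while $\upc p_{ij}[Q_j]$ is a presumably larger set coming only from the finite-stage information between $i$ and $j$. The intuition is that as $j$ increases, the sets $\upc p_{ij}[Q_j]$ shrink down toward $\upc p_i[Q]$, so eventually one of them must fit inside $U$.

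**The plan.** The plan is to argue by contradiction via compactness. Suppose that for every $j \sqsupseteq i$, the set $\upc p_{ij}[Q_j]$ is \emph{not} contained in $U$, i.e.\ $\upc p_{ij}[Q_j] \cap C \neq \emptyset$ where $C \eqdef Q_i \setminus U$ is the closed complement. I want to build a new projective system, over the cofinal index set $\upc i$, whose stage-$j$ space is the preimage of $C$ inside $Q_j$, and show its projective limit is nonempty by Steenrod's theorem; a point of that limit will then give a point of $Q$ whose image in $Q_i$ lies in $C$, contradicting $\upc p_i[Q] \subseteq U$.

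**Key steps in order.** First I would restrict attention to the cofinal subset $\upc i = \{j \in I \mid i \sqsubseteq j\}$, using the remark (stated just before the lemma) that passing to a cofinal subsystem preserves the projective limit up to isomorphism; this lets me treat $i$ as the least index. Next, for each $j \sqsupseteq i$ I set $Q'_j \eqdef p_{ij}^{-1}(C) \cap Q_j$, where $C = Q_i \setminus U$ is closed. By Fact~\ref{fact:pQ:U}, $\upc p_{ij}[Q_j] \cap C \neq \emptyset$ is equivalent to $Q'_j \neq \emptyset$, so the contradiction hypothesis says every $Q'_j$ is nonempty. Crucially, each $Q'_j$ is a closed subspace of the compact sober space $Q_j$ (as a preimage of a closed set under a continuous map), and I must argue it is again compact sober: closed subspaces of compact spaces are compact, and by Lemma~\ref{lemma:QC:sober} (taking $A = Q_j$ saturated and the closed set $p_{ij}^{-1}(C)$) the subspace is sober. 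The bonding maps $p_{jk}$ restrict to maps $Q'_k \to Q'_j$ because $p_{ij} \circ p_{jk} = p_{ik}$ forces $p_{jk}$ to carry $p_{ik}^{-1}(C)$ into $p_{ij}^{-1}(C)$. This yields a projective system of nonempty compact sober spaces indexed by the directed set $\upc i$.

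**Finishing and the main obstacle.** Applying Theorem~\ref{thm:steenrod} (Steenrod's theorem) to this system, its canonical projective limit $Q'$ is nonempty. A point of $Q'$ is a compatible tuple $(x_j)_{j \sqsupseteq i}$ with $x_j \in Q'_j$, hence a point $\vec{x} \in Q$ with $p_i(\vec{x}) = x_i \in C = Q_i \setminus U$; but this contradicts $p_i(\vec{x}) \in \upc p_i[Q] \subseteq U$. I expect the main obstacle to be the verification that $Q'$ injects correctly into $Q$ and that the limit of the subsystem really does recover points of the original limit lying over $C$ — in other words, checking that a compatible family over $\upc i$ extends to (or simply \emph{is}, after the cofinal reduction) a point of $Q$, and that its $i$th coordinate genuinely lands in $C$ rather than merely in $\upc p_i$ of something. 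This hinges on the cofinal-reduction remark being applied cleanly at the start so that the $Q'_j$-system's limit \emph{is} a subspace of $Q$'s limit, and on tracking that the compatibility conditions $x_i = p_{ij}(x_j)$ with $x_j \in p_{ij}^{-1}(C)$ indeed pin $x_i$ into $C$. The sobriety hypothesis is indispensable precisely because Steenrod's theorem fails without it (Remark~\ref{rem:steenrod:sober}), and Lemma~\ref{lemma:QC:sober} is what guarantees the $Q'_j$ inherit sobriety, so these two ingredients are the technical heart of the argument.
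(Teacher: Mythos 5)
Your proposal is correct and follows essentially the same route as the paper's proof: pass to the cofinal subsystem over $\upc i$, argue by contradiction by forming the non-empty closed subsets $Q_j \setminus p_{ij}^{-1}(U)$ (your $Q'_j$), check they are compact and sober via Lemma~\ref{lemma:QC:sober} and that the bonding maps restrict, then invoke Steenrod's theorem to produce a point of $Q$ whose $i$th coordinate lies outside $U$. The "obstacle" you flag is handled exactly as you anticipate: after the cofinal reduction, a compatible tuple over $\upc i$ is literally a point of the canonical limit $Q$, and $Q'_i = C$ pins its $i$th coordinate into $C$.
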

\begin{proof}
  Replacing $I$ by the cofinal family $\upc i$, we may assume that $i$
  is the least element of $I$.

  Assume the result was wrong.  For every $j \in \upc i$, $Q_j$ is not
  included in the open set $p_{ij}^{-1} (U)$.  Therefore
  $C_j \eqdef Q_j \diff p_{ij}^{-1} (U)$ is a non-empty closed subset
  of $Q_j$.  In particular, it is compact, and sober by
  Lemma~\ref{lemma:QC:sober}.  For all $j \sqsubseteq k$ in $\upc i$,
  $p_{jk}$ restricts to a map from $C_k$ to $C_j$: for every
  $x \in C_k$, if $p_{jk} (x)$ were not in $C_j$, then it would be in
  $p_{ij}^{-1} (U)$, so $p_{ij} (p_{jk} (x)) = p_{ik} (x)$ would be in
  $U$, hence $x$ would be in $p_{ik}^{-1} (U)$, contradicting the fact
  that $x$ is in $C_k$.  This way, we obtain a projective system
  ${(p_{jk} \colon C_k \to C_j)}_{j \sqsubseteq k \in \upc i}$ of
  non-empty compact sober spaces.  By Theorem~\ref{thm:steenrod}, its
  canonical projective limit $C$ is non-empty.  Pick an element
  $\vec x \eqdef {(x_j)}_{j \in \upc i}$ from $C$.  Since
  $p_{jk} (x_k) = x_k$ for all $j \sqsubseteq k$ in $\upc i$, $\vec x$
  is in $Q$.  Then $x_i = p_i (x)$ is in $\upc p_i [Q] \subseteq U$,
  which is impossible since $x_i$ is in $C_i = Q_i \diff U$ by
  construction.
\end{proof}

\section{Projective systems of locally compact sober spaces}
\label{sec:proj-syst-core}

We can now apply our Prohorov-like theorem on locally compact sober
spaces.  Steenrod's theorem will cater for uniform tightness.

\begin{theorem}[Projective limits of continuous valuations, locally
  compact sober case]
  \label{thm:prohorov:loccomp}
  Let
  ${(p_{ij} \colon (X_j, \nu_j) \to (X_i, \nu_i))}_{i \sqsubseteq j
    \in I}$ be a projective system of valued spaces.  Let
  $X, {(p_i)}_{i \in I}$ be a projective limit of the underlying
  projective system
  ${(p_{ij} \colon X_j \to X_i)}_{i \sqsubseteq j \in I}$.

  If $I$ has a countable cofinal subset,
  and if every $X_i$ is locally compact and sober, then
  there is a unique continuous valuation $\nu$ on $X$ such that for
  every $i \in I$, $\nu_i = p_i [\nu]$.  Moreover, $\nu$ is tight.
\end{theorem}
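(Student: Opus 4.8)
The plan is to reduce everything to the uniform tightness hypothesis of Theorem~\ref{thm:prohorov} and then verify that hypothesis using local compactness (to get tightness at each level) together with Steenrod's theorem. First note that each $X_i$, being locally compact, is consonant, so by Lemma~\ref{lemma:inner:regular}(4) every $\nu_i$ is tight. It therefore suffices to prove that ${(\nu_i)}_{i \in I}$ is uniformly tight; the existence, uniqueness and tightness of $\nu$ then follow from Theorem~\ref{thm:prohorov}. I would next use the countable cofinal subset: replacing $I$ by such a subset leaves the limit unchanged up to isomorphism (as recorded just before Lemma~\ref{lemma:steenrod:open}), and by Remark~\ref{rem:unif:tight:kripke} together with the monotonicity $\nu_i^\bullet (\upc p_i [Q]) \geq \nu_j^\bullet (\upc p_j [Q])$ ($i \sqsubseteq j$) established inside Lemma~\ref{lemma:unif:tight}, it is enough to check the uniform tightness condition for a fixed $i$ along an increasing cofinal sequence $i = i_0 \sqsubseteq i_1 \sqsubseteq \cdots$. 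So the whole problem becomes: given an open $U \subseteq X_{i_0}$ and $r \ll \nu_{i_0}(U)$ (the case $r = 0$ being settled by $Q \eqdef \emptyset$, so assume $0 < r < \infty$), produce a compact saturated $Q \subseteq X$ with $\upc p_{i_0}[Q] \subseteq U$ and $\nu_{i_n}^\bullet (\upc p_{i_n}[Q]) \geq r$ for every $n$.

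The core of the construction is to build a \emph{projective system} of compact sober spaces $Q_n \subseteq X_{i_n}$, i.e.\ compact saturated sets satisfying $Q_0 \subseteq U$, the compatibility $p_{i_n i_{n+1}}[Q_{n+1}] \subseteq Q_n$, and a mass bound $\nu_{i_n}^\bullet (Q_n) \geq t_n$, where I fix a strictly decreasing sequence $t_n \downarrow r$ with $t_0 < \nu_{i_0}(U)$. The base case $Q_0$ comes straight from tightness of $\nu_{i_0}$ applied to $U$ with the threshold $t_0 \ll \nu_{i_0}(U)$. Each $Q_n$ is automatically sober, being a saturated subspace of the sober space $X_{i_n}$ (Lemma~\ref{lemma:QC:sober} with the closed set taken to be all of $X_{i_n}$), so once compatibility holds the system is genuinely a projective system of compact sober spaces. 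The inductive step asks, writing $p \eqdef p_{i_n i_{n+1}}$, for a compact saturated $Q_{n+1} \subseteq p^{-1}(Q_n)$ with $\nu_{i_{n+1}}^\bullet (Q_{n+1}) \geq t_{n+1}$, given $\nu_{i_n}^\bullet (Q_n) \geq t_n$. The raw material is tightness of $\nu_{i_{n+1}}$: for each open neighbourhood $W$ of $Q_n$ the open set $p^{-1}(W)$ has $\nu_{i_{n+1}}(p^{-1}(W)) = \nu_{i_n}(W) \geq t_n > t_{n+1}$, so it contains a compact saturated set of $\nu_{i_{n+1}}^\bullet$-mass at least $t_{n+1}$.

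The main obstacle is exactly this inductive step: the compacts produced for the various $W$ only project into the neighbourhoods $W$, not into $Q_n = \fcap_{W} W$ itself, and they are not nested, so one cannot simply intersect them without losing all the mass. I would resolve this using well-filteredness of $X_{i_{n+1}}$ (which holds since it is sober) through two facts: that a filtered intersection of compact saturated sets is again compact saturated, and that $\nu_{i_{n+1}}^\bullet$ is co-continuous along such intersections, i.e.\ $\nu_{i_{n+1}}^\bullet (\fcap_\alpha K_\alpha) = \finf_\alpha \nu_{i_{n+1}}^\bullet (K_\alpha)$ — the nontrivial inequality following because any open neighbourhood of $\fcap_\alpha K_\alpha$ must, by well-filteredness, already contain some $K_\alpha$. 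Combining these with a Zorn-type selection of a $\subseteq$-minimal compact saturated set of mass $\geq t_{n+1}$ forces the resulting $Q_{n+1}$ to sit inside $p^{-1}(Q_n)$, since minimality rules out the possibility that its image escapes every neighbourhood shrinking down to $Q_n$. This compactness-preserving limiting argument is where the real work lies.

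Finally I would assemble the pieces. By Steenrod's theorem (Theorem~\ref{thm:steenrod}) the canonical projective limit $Q \eqdef \varprojlim_n Q_n$ is compact, and it is non-empty because each $Q_n$ is non-empty (its mass is $\geq r > 0$); along the cofinal sequence it is a saturated subset of $X$, with $\upc p_{i_0}[Q] \subseteq Q_0 \subseteq U$. For the mass bound, fix $n$ and any open $V \supseteq \upc p_{i_n}[Q]$; Lemma~\ref{lemma:steenrod:open}, applied to the system ${(Q_m)}$, yields some $m \sqsupseteq n$ with $\upc p_{i_n i_m}[Q_m] \subseteq V$, equivalently $Q_m \subseteq p_{i_n i_m}^{-1}(V)$ by Fact~\ref{fact:pQ:U}, so that $\nu_{i_n}(V) = \nu_{i_m}(p_{i_n i_m}^{-1}(V)) \geq \nu_{i_m}^\bullet (Q_m) \geq r$. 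Taking the infimum over all such $V$ gives $\nu_{i_n}^\bullet (\upc p_{i_n}[Q]) \geq r$, which is precisely the uniform tightness condition of Remark~\ref{rem:unif:tight:kripke}. An appeal to Theorem~\ref{thm:prohorov} then delivers the unique tight continuous valuation $\nu$ with $p_i [\nu] = \nu_i$ for all $i$.
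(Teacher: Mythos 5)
Your overall architecture---reduce to a countable cofinal sequence, build a compatible chain of compact saturated sets $Q_n$, apply Steenrod's theorem and Lemma~\ref{lemma:steenrod:open}, then Remark~\ref{rem:unif:tight:kripke} and Theorem~\ref{thm:prohorov}---is exactly right, and your final assembly is correct. The gap is the inductive step, and it is not merely that the Zorn argument is sketchy: the step is unprovable from your inductive invariant, because recording only $\nu_{i_n}^\bullet (Q_n) \geq t_n$ is too weak. Concretely, take $X_0 \eqdef \nat \cup \{\top\}$ with the Scott topology of the chain $0 < 1 < \cdots < \top$ (an algebraic dcpo, hence locally compact and sober; its nonempty opens are the sets $\upc n$, $n \in \nat$), take $X_n \eqdef \nat$ discrete for $n \geq 1$, with identity bonding maps between the discrete copies and the inclusion $p_{01} \colon \nat \to X_0$, and let $\nu_n$ be counting measure for $n \geq 1$ and $\nu_0 \eqdef p_{01} [\nu_1]$, so that $\nu_0 (\upc n) = \infty$ for every $n$. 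All hypotheses of the theorem hold (and its conclusion too, with counting measure on the limit). But in your base case, with $U \eqdef X_0$, tightness of $\nu_0$ permits the choice $Q_0 \eqdef \{\top\}$: it is compact saturated, contained in $U$, and $\nu_0^\bullet (\{\top\}) = \inf_n \nu_0 (\upc n) = \infty \geq t_0$. Then $p_{01}^{-1} (Q_0) = \emptyset$, and no compact saturated $Q_1 \subseteq p_{01}^{-1} (Q_0)$ of mass $\geq t_1 > 0$ exists, so the induction dies. Your Zorn/minimality rescue also fails on its own terms, already for Lebesgue measure $\lambda$ on $[0,1]$ with the identity bonding map, $Q_n \eqdef [0,1/2]$ and $t_{n+1} \eqdef 1/4$: the set $[1/2,3/4]$ is a $\subseteq$-minimal compact set with $\lambda^\bullet$-mass $\geq 1/4$ (removing any nonempty relatively open piece strictly decreases its measure), yet it is not contained in $p^{-1} (Q_n) = Q_n$, so minimality does not rule out escape; and if you instead restrict the Zorn family to sets contained in $p^{-1} (W)$ for every open $W \supseteq Q_n$, then nonemptiness of that family is precisely the statement to be proved, which Zorn cannot supply.

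The missing idea is that local compactness must be used at every stage, not just once (through consonance) to get tightness of each $\nu_i$. The paper maintains the stronger invariant $r \ll \nu_n (\interior {Q_n})$---a lower bound on the mass of the \emph{interior}, not on $\nu_n^\bullet (Q_n)$. This makes the inductive step immediate: $U_{n+1} \eqdef p_{n(n+1)}^{-1} (\interior {Q_n})$ is open with $\nu_{n+1} (U_{n+1}) = \nu_n (\interior {Q_n})$, hence $r \ll \nu_{n+1} (U_{n+1})$; by local compactness, $U_{n+1}$ is a directed union of interiors of compact saturated subsets of itself, so Scott-continuity of $\nu_{n+1}$ yields a compact saturated $Q_{n+1} \subseteq U_{n+1}$ with $r \ll \nu_{n+1} (\interior {Q_{n+1}})$, and the compatibility $p_{n(n+1)} [Q_{n+1}] \subseteq Q_n$ holds by construction since $Q_{n+1} \subseteq p_{n(n+1)}^{-1} (\interior {Q_n})$. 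Note that this invariant rules out the pathological choice above, since $\interior {\{\top\}} = \emptyset$ in $X_0$, forcing $Q_0$ to be some $\upc m$ instead. With this replacement for your inductive construction, the rest of your argument (Steenrod's theorem, Lemma~\ref{lemma:steenrod:open}, the estimate over open neighborhoods $V$ of $\upc p_n [Q]$, Remark~\ref{rem:unif:tight:kripke}, and Theorem~\ref{thm:prohorov}) goes through essentially verbatim and coincides with the paper's proof.
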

\begin{proof}
  It is easy to see that, since $I$ has a countable cofinal subset, it
  also has a cofinal monotone sequence
  $i_0 \sqsubseteq i_0 \sqsubseteq \cdots \sqsubseteq i_n \sqsubseteq
  \cdots$.  Replacing $I$ by that sequence, we may assume that
  $I = \nat$ and $\sqsubseteq$ is the usual ordering on $\nat$.  We
  are given a projective system
  ${(p_{mn} \colon (X_n, \nu_n) \to (X_m, \nu_m))}_{m \leq n \in
    \nat}$ of valued spaces, and we wish to find a (tight) continuous
  valuation $\nu$ on $X$ such that for every $n \in \nat$,
  $\nu_{i_n} = p_{i_n} [\nu]$.

  We verify uniform tightness.  Fix $m \in \nat$, let $U_m$ be an open
  subset of $X_m$, and $r \ll \nu_m (U_m)$.  Using local compactness,
  $U_m$ is the directed union of the interiors of compact saturated
  subsets of $U_m$.  Since $r \ll \nu_m (U_m)$ and $\nu_m$ is
  Scott-continuous, there is a compact saturated subset $Q_m$ of $U_m$
  such that $r \ll \nu_m (\interior {Q_m})$.  Let
  $U_{m+1} \eqdef p_{m(m+1)}^{-1} (\interior {Q_m})$.  We have
  $r \ll \nu_m (\interior {Q_m}) = p_{m(m+1)} [\nu_{m+1}] (\interior
  {Q_m}) = \nu_{m+1} (U_{m+1})$, so, by the same argument, there is a
  compact saturated subset $Q_{m+1}$ of $U_{m+1}$ such that
  $r \ll \nu_{m+1} (\interior {Q_{m+1}})$.  Iterating the process, we
  build compact saturated subsets $Q_n$ of
  $U_n \eqdef p_{(n-1)n}^{-1} (\interior {Q_{n-1}})$ such that
  $r \ll \nu_n (\interior {Q_n})$ for every $n > m$.

  When $n < m$, define $Q_n$ as $\upc p_{nm} [Q_m]$.  All the sets
  $Q_n$ are compact saturated in $X_n$, and $p_{(n-1)n}$ maps $Q_n$ to
  $Q_{n-1}$ for every $n \geq 1$.  Seeing $Q_n$ as a subspace instead
  of a mere subset of $X_n$, $Q_n$ is compact, and also sober by
  Lemma~\ref{lemma:QC:sober} (with $C \eqdef \emptyset$), since $Q_n$
  is saturated and $X_n$ is sober.  Steenrod's
  theorem~\ref{thm:steenrod} tells us that the canonical projective
  limit $Q$ of ${(p_{mn} \colon Q_n \to Q_m))}_{m \leq n \in \nat}$ is
  a compact space.  By construction, $Q$ is a subspace of the
  canonical projective limit $X, {(p_n)}_{n \in \nat}$ of
  ${(p_{mn} \colon Q_n \to Q_m))}_{m \leq n \in \nat}$.  Being compact
  as a subspace, it is also compact as a subset of
  $X$.

  The specialization preordering on $X$ is the restriction of that on
  $\prod_{n \in \nat} X_n$, hence is the componentwise preordering.
  Since every $Q_n$ is saturated (upwards-closed), one checks easily
  that $Q$ is also upwards-closed.  Since $p_m$ is a map from $Q$ to
  $Q_m$, $p_m [Q]$ is included in $Q_m$ hence in $U_m$, so
  $\upc p_m [Q] \subseteq U_m$.

  We now claim that $r \leq \nu_n^\bullet (\upc p_n [Q])$ for every
  $n \geq m$.  (We do not need to check that for $n <m$, by
  Remark~\ref{rem:unif:tight:kripke}.)  Pick an arbitrary open
  neighborhood $V$ of $\upc p_n [Q]$.  In particular,
  $Q \subseteq p_n^{-1} (V)$.  Since $\upc p_n [Q] \subseteq V$ and
  $p_n$ maps $Q$ to $Q_n$, $\upc p_n [Q]$ is included in the open
  subset $V \cap Q_n$ of $Q_n$.  By Lemma~\ref{lemma:steenrod:open},
  there is an index $k \geq n$ such that
  $\upc p_{nk} [Q_k] \subseteq V \cap Q_n$, hence such that
  $Q_k \subseteq p_{nk}^{-1} (V)$.  Recall that
  $r \ll \nu_k (\interior {Q_k})$, so
  $r \leq \nu_k (\interior {Q_k}) \leq \nu_k (p_{nk}^{-1} (V)) =
  p_{nk} [\nu_k] (V) = \nu_n (V)$.

  Hence ${(\nu_n)}_{n \in \nat}$ is uniformly tight.  We conclude by
  Theorem~\ref{thm:prohorov}.
\end{proof}

\begin{remark}
  \label{rem:prohorov:loccomp:sober}
  Sobriety is essential in Theorem~\ref{thm:prohorov:loccomp}.
  Consider again the projective system of
  Remark~\ref{rem:steenrod:sober}.  For each $m \in \nat$, define
  $\nu_m$ on $X_m$ by $\nu_m (U) \eqdef 1$ if
  $U \cap \upc m \neq \emptyset$, $0$ otherwise.  One checks easily
  that $\nu_m$ is a continuous valuation.  However, there is no
  continuous valuation $\nu$ on $\nat$ such that $\nu_m = p_m [\nu]$
  for every $m \in \nat$.  Indeed, such a $\nu$ would necessarily map
  every finite subset $U$ of $\nat$ to $0$, and every cofinite subset
  to $1$, contradicting Scott-continuity.
\end{remark}

\section{Projective limits of complete metric spaces, and more}
\label{sec:proj-limits-compl}

A similar technique applies to the case of complete metric spaces, and
more generally, of $G_\delta$ subsets of locally compact sober spaces.

All complete metric spaces (with their open ball topology), and more
generally all continuous complete quasi-metric spaces $X, d$ (with
their $d$-Scott topology), arise as $G_\delta$ subsets of their space
$\mathbf B (X, d)$ of formal balls, and the latter is a continuous
dcpo, hence is locally compact and sober.  That follows from
\cite{edalat98} for complete metric spaces.  For the larger class of
continuous complete quasi-metric spaces, this follows from
Proposition~2.6 of \cite{GLN-lmcs17}, which states that $X$ occurs as
a $G_\delta$ subset of $\mathbf B (X, d)$ whenever $X, d$ is standard;
all continuous quasi-metric spaces are standard, and $X, d$ is
continuous complete if and only if $\mathbf B (X, d)$ is a continuous
dcpo (Theorem~3.7 of loc.\ cit.).

Our final theorem will not apply to all continuous valuations, rather
to locally finite continuous valuations.  A valuation $\nu$ is
\emph{locally finite} if and only if every point $x$ has an open
neighborhood $U$ such that $\nu (U) < \infty$.  This is a standard
notion in measure theory.
\begin{lemma}
  \label{lemma:locfin:val}
  Let $\nu$ be a valuation on a topological space $X$.  The following
  are equivalent:
  \begin{enumerate}
  \item $\nu$ is locally finite;
  \item $X$ is a union of open subsets $V$ such that $\nu (V) <
    \infty$;
  \item every open subset $U$ of $X$ is a union of open subsets $V$
    such that $\nu (V) < \infty$;
  \item every open subset $U$ of $X$ is a directed union of open
    subsets $V$ such that $\nu (V) < \infty$.
  \end{enumerate}
\end{lemma}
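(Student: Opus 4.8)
The plan is to prove the four conditions equivalent via the cyclic chain $1 \Rightarrow 2 \Rightarrow 3 \Rightarrow 4 \Rightarrow 1$. The two ``endpoint'' implications are pure definition-chasing. For $1 \Rightarrow 2$, I would note that if $\nu$ is locally finite then every point $x$ has an open neighbourhood $U_x$ with $\nu (U_x) < \infty$, and $X = \bigcup_{x \in X} U_x$ exhibits $X$ as a union of finite-measure open sets. Conversely, for $4 \Rightarrow 1$ I would specialise the directed union of item~4 to the open set $U \eqdef X$: any point $x$ then lies in one of the open sets $V$ of finite measure covering $X$, and that $V$ is the required finite-measure neighbourhood of $x$.

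For $2 \Rightarrow 3$ I would use only monotonicity of $\nu$. Writing $X = \bigcup_i V_i$ with each $\nu (V_i) < \infty$, and letting $U$ be an arbitrary open subset, one has $U = \bigcup_i (U \cap V_i)$, each $U \cap V_i$ is open, and $\nu (U \cap V_i) \leq \nu (V_i) < \infty$ by monotonicity, so $U$ is a union of finite-measure open sets.

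The only step that uses the valuation structure beyond monotonicity --- and the one I expect to be the crux --- is $3 \Rightarrow 4$, where an arbitrary union must be upgraded to a directed one. Given $U = \bigcup_i V_i$ with $\nu (V_i) < \infty$, I would pass to the family of all finite unions $V_{i_1} \cup \cdots \cup V_{i_n}$. This family is directed, since the union of two finite subfamilies is again finite, and its union is still $U$. The point to verify is that each such finite union has finite measure, which follows by induction from finite subadditivity: from modularity $\nu (V \cup W) + \nu (V \cap W) = \nu (V) + \nu (W)$, and since $\nu (V \cap W) \geq 0$, we get $\nu (V \cup W) \leq \nu (V) + \nu (W)$. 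The mild subtlety is the arithmetic in $\creal$: when $\nu (V)$ and $\nu (W)$ are both finite, the right-hand side of the modularity identity is finite, which forces both non-negative summands on the left --- in particular $\nu (V \cup W)$ --- to be finite as well. Iterating, every finite union of the $V_i$ has finite measure, completing $3 \Rightarrow 4$ and hence closing the cycle.
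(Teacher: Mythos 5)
Your proof is correct and follows essentially the same route as the paper: the same cycle $1 \Rightarrow 2 \Rightarrow 3 \Rightarrow 4 \Rightarrow 1$, with the same constructions (pointwise neighbourhoods, intersecting with the cover, passing to finite unions, and specialising to $U = X$). The only difference is that you spell out the derivation of finite subadditivity from modularity, which the paper uses without comment; that is a harmless elaboration, not a different argument.
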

\begin{proof}
  $1 \limp 2$.  For each point $x$ of $X$, pick an open subset $U_x$
  of $X$ such that $\nu (U_x) < \infty$.  Then $X = \bigcup_{x \in X}
  U_x$.

 $2 \limp 3$.  Assume that $X = \bigcup_{i \in I} U_i$ where $\nu
 (U_i) < \infty$ for each $i \in I$.  For every open subset $U$ of
 $X$, $U$ is equal to $\bigcup_{i \in I} U_i \cap U$, and $\nu (U_i
 \cap U) \leq \nu (U_i) < \infty$ for every $i \in I$.

 $3 \limp 4$.  Let $U$ be an open subset of $X$ and assume that
 $U = \bigcup_{i \in I} U_i$ where $\nu (U_i) < \infty$ for each
 $i \in I$.  Then $U$ is also equal to the directed union of the sets
 $\bigcup_{i \in J} U_i$, $J \in \Pfin (I)$, and
 $\nu (\bigcup_{i \in J} U_i) \leq \sum_{i \in J} \nu (U_i) < \infty$.

  $4 \limp 1$.  Take $U \eqdef X$, and write it as $\dcup_{i \in I}
  \nu (V_i)$ where $\nu (V_i) < \infty$ for every $i \in I$.  For
  every $x \in X$, $x$ is in some $V_i$.
\end{proof}

The following is standard, and the proof is left to the reader.
\begin{lemma}
  \label{lemma:Gdelta:subspace}
  Let $X$ be a subspace of a topological space $Y$.  For every subset
  $E$ of $X$,
  \begin{enumerate}
  \item $E$ is compact in $X$ if and only if $E$ is compact in $Y$;
  \item if $X$ is upwards-closed in $Y$, then $E$ is saturated in $X$
    if and only if $E$ is saturated in $Y$;
  \item if $X$ is a $G_\delta$ subset of $Y$, then $E$ is $G_\delta$
    in $X$ if and only if $E$ is $G_\delta$ in $Y$.  \qed
  \end{enumerate}
\end{lemma}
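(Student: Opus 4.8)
The plan is to dispatch the three items in turn, each resting on the transitivity of the subspace construction: every structure $E$ inherits from $X$ coincides with the one it inherits directly from $Y$, since $X$ already carries the subspace topology (and specialization preorder) from $Y$.

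For item~1 I would use the Heine-Borel characterization of compactness recalled in Section~\ref{sec:preliminaries}. The open subsets of $X$ are exactly the sets $U \cap X$ with $U \in \Open Y$, so covering $E$ by opens of $X$ is the same as covering it by opens of $Y$, and a finite subcover on either side yields one on the other. Hence $E$ is compact in $X$ if and only if it is compact in $Y$.

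For item~2 I would unfold ``saturated'' as ``upwards-closed for the specialization preorder'', using the fact from Section~\ref{sec:preliminaries} that the preorder of a subspace is the restriction of the ambient one. The direction from ``saturated in $Y$'' to ``saturated in $X$'' needs no hypothesis: if $x \in E$, $y \in X$ and $x \leq y$, then $y \in Y$, so $y \in E$. The hypothesis that $X$ is upwards-closed in $Y$ is used only for the converse: given $x \in E$ and $y \in Y$ with $x \leq y$, upwards-closedness of $X$ forces $y \in X$, and then saturatedness in $X$ gives $y \in E$.

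For item~3 I would write $G_\delta$ sets as countable intersections of opens. If $E = \bigcap_n U_n$ with each $U_n \in \Open X$, I write $U_n = V_n \cap X$ for some $V_n \in \Open Y$ and intersect with a presentation $X = \bigcap_m W_m$ of $X$ as a $G_\delta$ in $Y$, exhibiting $E$ as a countable intersection of opens of $Y$. Conversely, if $E = \bigcap_n V_n$ with $V_n \in \Open Y$, then $E = E \cap X = \bigcap_n (V_n \cap X)$ is $G_\delta$ in $X$, with no hypothesis needed. There is no real obstacle in any of this; the only care required is to track the subspace topologies correctly and to notice that the two stated hypotheses (upwards-closedness for item~2, the $G_\delta$ assumption for item~3) are each used exactly once, in the ``$X$-to-$Y$'' direction, and are dispensable for the reverse implications.
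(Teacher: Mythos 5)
Your proof is correct. The paper does not give a proof at all---it explicitly leaves this lemma to the reader as standard---and your argument (transitivity of the subspace topology and specialization preorder, plus the routine bookkeeping of where the two hypotheses are needed) is precisely the standard one the authors intend.
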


Every locally finite continuous valuation $\nu$ on a locally compact
sober space $X$ extends to a measure on the Borel $\sigma$-algebra
$\Borel X$.  Moreover, if $\nu$ is bounded, then the extension is
unique.  This was proved by Alvarez-Manilla
\cite[Theorem~3.27]{alvarez00}, see also
\cite[Theorem~5.3]{KL:measureext}.  The same result had been proved
earlier by Lawson \cite[Corollary~3.5]{Lawson:valuation} under an
additional second countability assumption, and for bounded continuous
valuations.

Conversely, the measures that restrict to a continuous valuation on
$\Open X$ are usually called \emph{$\tau$-smooth} in measure theory.
\begin{lemma}
  \label{lemma:Gdelta:comp}
  Let $Y$ be a locally compact sober space, and $\mu$ be a bounded
  $\tau$-smooth measure on $(Y, \Borel Y)$.  For every $G_\delta$
  subset $E$ of $Y$, for every $\epsilon > 0$, there is a compact
  $G_\delta$ subset $Q$ of $Y$ included in $E$ such that
  $\mu (Q) > \mu (E) - \epsilon$.
\end{lemma}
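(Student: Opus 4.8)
The plan is to transfer everything from the $\tau$-smooth measure $\mu$ to the continuous valuation $\nu \eqdef \mu|_{\Open Y}$ (which is exactly the data that $\tau$-smoothness supplies), and to build $Q$ as a countable decreasing intersection of compact saturated sets deliberately arranged to be $G_\delta$. Write $E = \fcap_{n \geq 1} U_n$ with each $U_n$ open; replacing $U_n$ by $U_1 \cap \cdots \cap U_n$ I may take the sequence decreasing, and since $\mu$ is bounded, continuity from above gives $\mu(U_n) \downarrow \mu(E) \eqdef a$. Passing to a subsequence, which does not change the intersection of a decreasing sequence, I may further assume $\mu(U_n) < a + \eta_n$, where $\eta_n > 0$ is chosen in advance so that $\sum_n \eta_n$ is as small as I like; in particular $\mu(U_n \diff E) < \eta_n$. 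If $a = 0$ the empty set already works, so assume $a > 0$.

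The core is an inductive construction of compact saturated sets $Q_1 \supseteq Q_2 \supseteq \cdots$ with $Q_n \subseteq U_n$, $Q_{n+1} \subseteq \interior{Q_n}$, and $\nu(\interior{Q_n}) > a - c_n$, where the budgets $c_n$ increase to a value strictly below $\epsilon$. The definition of local compactness, namely $U = \dcup \{\interior Q : Q \text{ compact saturated}, Q \subseteq U\}$, together with Scott-continuity of $\nu$, yields $\nu(U) = \dsup_Q \nu(\interior Q)$; so from any open $U$ with $\nu(U)$ large I can extract a compact saturated $Q \subseteq U$ whose \emph{interior} already carries almost all of $\nu(U)$. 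I start by applying this to $U_1$ (noting $\nu(U_1) \geq \mu(E) = a$) to obtain $Q_1$. At step $n \to n+1$ I apply it to the open set $O_n \eqdef \interior{Q_n} \cap U_{n+1}$, and the whole argument hinges on $O_n$ still having large measure. This is where modularity of $\mu$ enters: since $E \cup \interior{Q_n} \subseteq U_n$ and $E \subseteq U_{n+1}$ (so $E \cap O_n = E \cap \interior{Q_n}$), monotonicity and modularity give
\[ \nu(O_n) \geq \mu(E \cap \interior{Q_n}) = \mu(E) + \mu(\interior{Q_n}) - \mu(E \cup \interior{Q_n}) > a + (a - c_n) - (a + \eta_n) = a - c_n - \eta_n. \]
A further application of local compactness then produces $Q_{n+1} \subseteq O_n \subseteq \interior{Q_n}$ with $\nu(\interior{Q_{n+1}})$ still above $a - c_{n+1}$, absorbing the loss $\eta_n$ (and a tiny approximation error) into $c_{n+1}$.

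Finally I set $Q \eqdef \fcap_n Q_n$ and read off the four required properties. Since the $Q_n$ are decreasing compact saturated sets and $Y$ is sober, hence well-filtered, $Q$ is compact saturated. Because $Q_{n+1} \subseteq \interior{Q_n}$, the open sets $\interior{Q_n}$ decrease with $\fcap_n \interior{Q_n} = \fcap_n Q_n = Q$, so $Q$ is $G_\delta$. Containment $Q \subseteq \fcap_n U_n = E$ is immediate from $Q_n \subseteq U_n$. For the mass, continuity from above of the finite measure $\mu$ on the decreasing open sets $\interior{Q_n}$ gives $\mu(Q) = \finf_n \nu(\interior{Q_n}) \geq a - \dsup_n c_n > a - \epsilon = \mu(E) - \epsilon$. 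I expect the main obstacle to be precisely this mass bookkeeping at the inductive step: local compactness controls the measure of interiors of compact saturated sets, but intersecting $\interior{Q_n}$ with the next open $U_{n+1}$ could \emph{a priori} destroy most of the mass. The device that rescues the argument is to force $\mu(U_n \diff E)$ to shrink summably and to combine this with modularity, which makes $\interior{Q_n}$ and $U_{n+1}$ overlap on almost all of $E$; the twin demands that $Q$ be compact (needing well-filteredness) and $G_\delta$ (forcing the nesting $Q_{n+1} \subseteq \interior{Q_n}$) are what make the simultaneous construction delicate.
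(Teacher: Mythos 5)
Your proposal is correct and follows essentially the same route as the paper's proof: local compactness plus $\tau$-smoothness to pick nested compact saturated sets $Q_{n+1} \subseteq \interior{Q_n} \cap U_{n+1}$, modularity to control the mass lost at each step, well-filteredness (from sobriety) for compactness of $Q = \fcap_n Q_n$, the nesting $Q_{n+1} \subseteq \interior{Q_n}$ for the $G_\delta$ property, and continuity from above of the bounded measure for the final estimate. The only differences are cosmetic bookkeeping: the paper telescopes errors against $\mu(U_n)$ directly, while you normalize $\mu(U_n) < \mu(E) + \eta_n$ by a preliminary subsequence extraction and apply modularity with the Borel set $E$ instead of the open set $U_{n+1}$.
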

\begin{proof}
  Write $E$ as the intersection $\fcap_{n \in \nat} U_n$ of an
  antitone sequence of open subsets $U_n$, $n \in \nat$.  Since $Y$ is
  locally compact, $U_0$ is the directed union of sets of the form
  $\interior {Q_0}$, where $Q_0$ is compact saturated and included in
  $U_0$.  Since $\mu$ is $\tau$-smooth,
  $\mu (\interior {Q_0}) > \mu (U_0) - \epsilon / 2$ for some compact
  saturated subset $Q_0$ of $U_0$.  Call that inequality $(a_0)$.

  This is the base case of a construction of compact saturated subsets
  $Q_n$ by induction on $n$.  Assume that we have built $Q_n$ as a
  compact saturated subset of $Y$ included in $U_n$.  By the same
  argument, there is a compact saturated subset $Q_{n+1}$ of
  $U_{n+1} \cap \interior {Q_n}$ such that
  $\mu (\interior {Q_{n+1}}) > \mu (U_{n+1} \cap \interior {Q_n}) -
  \epsilon/2^{n+1}$.  Using the modularity law,
  $\mu (U_{n+1} \cap \interior {Q_n}) = \mu (U_{n+1}) + \mu (\interior
  {Q_n}) - \mu (U_{n+1} \cup \interior {Q_n})$.  Since
  $U_{n+1} \subseteq U_n$ and $Q_n \subseteq U_n$, that is larger than
  or equal to $\mu (U_{n+1}) + \mu (\interior {Q_n}) - \mu (U_n)$.
  Hence
  $\mu (\interior {Q_{n+1}}) > \mu (\interior {Q_n}) + \mu (U_{n+1}) -
  \mu (U_n) - \epsilon/2^{n+1}$.  Call that inequality $(a_{n+1})$.

  We build a telescoping sum out of the inequalities $(a_0)$, $(a_1)$,
  \ldots, $(a_n)$, and we obtain that
  $\mu (\interior {Q_n}) > \mu (U_n) - \epsilon (1-1/2^n) > \mu (U_n)
  - \epsilon$.

  Let $Q \eqdef \fcap_{n \in \nat} Q_n$.  Since $Y$ is sober hence
  well-filtered, $Q$ is compact saturated.  $Q$ is also equal to
  $\fcap_{n \in \nat} \interior {Q_n}$, since
  $Q_{n+1} \subseteq \interior {Q_n}$ for every $n \in \nat$.  Hence
  $Q$ is also a $G_\delta$ subset of $Y$.  By a familiar equality due
  to Kolmogorov, which applies to every bounded measure $\mu$,
  $\mu (Q) = \finf_{n \in \nat} \mu (\interior {Q_n})$.  That is
  larger than or equal to
  $\finf_{n \in \nat} \mu (U_n) - \epsilon = \mu (E) - \epsilon$.
  Finally, $Q$ is included in $E$ since $Q_n$ is included in $U_n$ for
  every $n \in \nat$.
\end{proof}

Given a continuous valuation $\nu$ on a space $Y$, and an open subset
$V$ of $Y$, we define $\nu_{|V}$ by
$\nu_{|V} (U) \eqdef \nu (U \cap V)$.  This is again a continuous
valuation, and a bounded one if $\nu (U) < \infty$.

\begin{theorem}
  \label{thm:prohorov:loccomp:Gdelta}
  Let
  ${(p_{ij} \colon (X_j, \nu_j) \to (X_i, \nu_i))}_{i \sqsubseteq j
    \in I}$ be a projective system of valued spaces.  Let
  $X, {(p_i)}_{i \in I}$ be a projective limit of the underlying
  projective system
  ${(p_{ij} \colon X_j \to X_i)}_{i \sqsubseteq j \in I}$.

  If $I$ has a countable cofinal subset,
  if every $\nu_i$ is locally finite, and if every $X_i$ embeds as a
  $G_\delta$ subset in a locally compact sober space, then there is a
  unique continuous valuation $\nu$ on $X$ such that for every
  $i \in I$, $\nu_i = p_i [\nu]$.  Moreover, $\nu$ is tight.
\end{theorem}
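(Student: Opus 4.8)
The plan is to follow the pattern of Theorem~\ref{thm:prohorov:loccomp}: reduce to the case $I = \nat$ by passing to a cofinal monotone sequence, verify that ${(\nu_n)}_{n \in \nat}$ is uniformly tight, and then invoke Theorem~\ref{thm:prohorov}. The only place where local compactness of the $X_i$ was used in that proof was to extract, inside each open set, a compact saturated set carrying almost all the mass; here I will instead use the measure-extension theory together with Lemma~\ref{lemma:Gdelta:comp}. Since the $\nu_i$ are only locally finite, the first move is to make everything bounded: fixing $m$, an open $U_m \subseteq X_m$ and $0 < r \ll \nu_m (U_m)$, Lemma~\ref{lemma:locfin:val} and Scott-continuity give an open $W_m \subseteq U_m$ with $\nu_m (W_m) < \infty$ and $r < \nu_m (W_m)$, and I set $W_n \eqdef p_{mn}^{-1} (W_m)$ for $n \geq m$. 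Compatibility yields $\nu_n (W_n) = \nu_m (W_m) < \infty$ and $W_{n'} = p_{nn'}^{-1} (W_n)$, so the bounded continuous valuations $\beta_n \eqdef {(\nu_n)}_{|W_n}$ satisfy $p_{nn'} [\beta_{n'}] = \beta_n$.

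Next I would pass to measures. Embedding $X_n$ as a $G_\delta$ subset of a locally compact sober space $Y_n$ via $\eta_n$, the image $\eta_n [\beta_n]$ is a bounded continuous valuation on $Y_n$, which extends (Alvarez-Manilla) to a unique bounded $\tau$-smooth Borel measure $\hat\mu_n$ on $Y_n$. Let $\mu_n$ be its restriction to the Borel subsets of $X_n$ (legitimate since $X_n$ is $G_\delta$, hence Borel, in $Y_n$). A continuity-from-above computation along a presentation $X_n = \fcap_k O_k$ shows $\mu_n (U) = \beta_n (U) = \nu_n (U \cap W_n)$ for every open $U$ of $X_n$. The crucial compatibility is $p_{nn'} [\mu_{n'}] = \mu_n$ as bounded Borel measures on $X_n$: both sides agree on open sets by the previous line and $p_{nn'} [\beta_{n'}] = \beta_n$, and since the open sets form a $\pi$-system generating $\Borel {X_n}$ and contain $X_n$, two finite Borel measures agreeing on them coincide.

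With the measures in hand I would build the compact sets. Starting from the $G_\delta$ set $W_m$ in $Y_m$, Lemma~\ref{lemma:Gdelta:comp} produces a compact $G_\delta$ subset $Q_m \subseteq W_m$ with $\mu_m (Q_m) > \nu_m (W_m) - \epsilon_m > r$. Inductively, given $Q_n$ with $\mu_n (Q_n) > r$, the set $p_{n(n+1)}^{-1} (Q_n)$ is $G_\delta$ in $X_{n+1}$ and, by the measure compatibility, $\mu_{n+1} (p_{n(n+1)}^{-1} (Q_n)) = \mu_n (Q_n) > r$; applying Lemma~\ref{lemma:Gdelta:comp} inside it yields a compact $G_\delta$ set $Q_{n+1} \subseteq p_{n(n+1)}^{-1} (Q_n)$ with $\mu_{n+1} (Q_{n+1}) > r$. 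Each $Q_n$ is compact and, being a compact saturated subset of the sober space $Y_n$, is sober by Lemma~\ref{lemma:QC:sober}; the restrictions of the $p_{n(n+1)}$ turn ${(Q_n)}_n$ into a projective system of compact sober spaces, so Steenrod's Theorem~\ref{thm:steenrod} gives a compact, non-empty projective limit $Q$, which sits inside $X$. I replace it by its saturation $\upc Q$ in $X$, noting this leaves every $\upc p_n [Q]$ unchanged, and observe $\upc p_m [Q] \subseteq W_m \subseteq U_m$.

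Finally I would check the bounds $r \leq \nu_n^\bullet (\upc p_n [Q])$ for $n \geq m$ (enough by Remark~\ref{rem:unif:tight:kripke}). For any open $V \supseteq \upc p_n [Q]$, Lemma~\ref{lemma:steenrod:open} applied to the system ${(Q_n)}_n$ produces $k \geq n$ with $Q_k \subseteq p_{nk}^{-1} (V)$, whence $\nu_n (V) \geq \mu_n (V) = \mu_k (p_{nk}^{-1} (V)) \geq \mu_k (Q_k) > r$; taking the infimum over $V$ gives the claim, and non-emptiness of $Q$ ensures $\upc p_n [Q] \neq \emptyset$ so that $V = \emptyset$ is excluded. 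Uniform tightness then follows, and Theorem~\ref{thm:prohorov} delivers the unique tight continuous valuation $\nu$. I expect the main obstacle to be the measure bookkeeping of the middle two paragraphs—reconciling continuous valuations on the $X_n$ with Borel measures on the ambient $Y_n$, and in particular establishing the pushforward identity $p_{nn'} [\mu_{n'}] = \mu_n$ on \emph{all} Borel sets, not merely on open ones, since it is exactly this identity that transports the mass estimate $\mu_n (Q_n) > r$ through the preimages $p_{n(n+1)}^{-1} (Q_n)$.
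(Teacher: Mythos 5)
Your proposal is correct and follows essentially the same route as the paper's own proof: reduce to a countable chain, cut down to finite mass using local finiteness, transfer the resulting bounded valuations to $\tau$-smooth Borel measures on the ambient locally compact sober spaces $Y_n$ via the Alvarez-Manilla extension theorem, iterate Lemma~\ref{lemma:Gdelta:comp} to build a chain of compact $G_\delta$ sets of mass $> r$, and conclude uniform tightness via Steenrod's Theorem~\ref{thm:steenrod}, Lemma~\ref{lemma:steenrod:open} and Remark~\ref{rem:unif:tight:kripke}, finishing with Theorem~\ref{thm:prohorov}. The only cosmetic deviations are that you restrict the valuations on the $X_n$ side rather than on the $Y_n$ side, and that you establish the pushforward compatibility $p_{nn'}[\mu_{n'}] = \mu_n$ on \emph{all} Borel sets by a $\pi$-system (Dynkin) argument, where the paper proves it only for $G_\delta$ subsets by continuity from above---both suffice for the mass estimates needed.
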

\begin{proof}
  As in Theorem~\ref{thm:prohorov:loccomp}, the claim reduces to the
  case of a countably indexed system
  ${(p_{mn} \colon (X_n, \nu_n) \to (X_m, \nu_m))}_{m \leq n \in
    \nat}$ of valued spaces.

  By assumption, each $X_n$ embeds as a $G_\delta$ subset of some
  locally compact sober space $Y_n$.  Hence, up to isomorphism, there
  is an antitone sequence ${(V_{nk})}_{k \in \nat}$ of open subsets of
  $Y_n$ such that $X_n = \fcap_{k \in \nat} V_{nk}$, for each
  $n \in \nat$.  Then $\nu_n$ extends to a continuous valuation on the
  whole of $Y_n$, which we write as $\nu^*_n$ (instead of
  $i_n [\nu_n]$, where $i_n$ is the embedding of $X_n$ into $Y_n$),
  and which is supported on $X_n$ (see Proposition~\ref{prop:supp}).
  Since $X_n$ embeds in $Y_n$ as a topological space, every open
  subset $U$ of $X_n$ can be written as $\widehat U \cap X_n$ for some
  open subset $\widehat U$ of $Y_n$.

  Let us show uniform tightness.  Fix $m \in \nat$, an open subset
  $U_m$ of $X_m$, and let $r \ll \nu_m (U_m)$.  If $r=0$, we can take
  $Q \eqdef \emptyset$, then the uniform tightness requirements
  $\upc p_m [Q] \subseteq U_m$ and
  $r \leq \nu_n^\bullet (\upc p_n (Q))$ for every $n \in \nat$ are
  trivial.  Henceforth, let us assume $r \neq 0$, so that
  $r < \nu_m (U_m)$.

  The local finiteness assumption allows us to assume that
  $\nu_m (U_m) < \infty$.  Indeed, $U_m$ is a directed union of open
  subsets $V$ such that $\nu_m (V) < \infty$.  Since $\nu_m$ is
  Scott-continuous, we must have $r < \nu_m (V)$ for one of them, and
  we replace $U_m$ with that $V$ if necessary.

  For each $n \geq m$, $\nu^*_{n|\widehat {p_{mn}^{-1} (U_m)}}$ is a
  bounded continuous valuation: boundedness is because
  $\nu^*_{n|\widehat {p_{mn}^{-1} (U_m)}} (Y_n) = \nu^*_n (\widehat
  {p_{mn}^{-1} (U_m)}) = \nu_n (p_{mn}^{-1} (U_m)) = p_{mn} [\nu_n]
  (U_m) = \nu_m (U_m) < \infty$.

  Let $\mu_n$ be the unique (bounded) measure that extends
  $\nu^*_{n|\widehat {p_{mn}^{-1} (U_m)}}$ to $(Y_n, \Borel {Y_n})$.
  For every $G_\delta$ subset $E$ of $X_n$, $E$ is also $G_\delta$ in
  $Y_n$ by Lemma~\ref{lemma:Gdelta:subspace}, item~3.  Hence
  $\mu_n (E)$ makes sense.  This is in particular true when $E$ is an
  open subset $U$ of $X_n$.

  We claim that: $(*)$ $\mu_n (U) = \nu_n (U \cap p_{mn}^{-1} (U_m))$
  for every open subset $U$ of $X_n$, for every $n \geq m$.  Indeed,
  $\mu_n (U) = \mu_n (\widehat U \cap X_n) = \finf_{k \in \nat} \mu_n
  (\widehat U \cap V_{nk})$ (remember that $\mu_n$ is bounded)
  $= \finf_{k \in \nat} \nu^*_{n|\widehat {p_{mn}^{-1} (U_m)}}
  (\widehat U \cap V_{nk}) = \finf_{k \in \nat} \nu^*_n (\widehat U
  \cap V_{nk} \cap \widehat {p_{mn}^{-1} (U_m)}) = \finf_{k \in \nat}
  \nu_n (U \cap p_{mn}^{-1} (U_m)) = \nu_n (U \cap p_{mn}^{-1}
  (U_m))$.

  It follows that: $(**)$ for every $G_\delta$ subset $E$ of $X_n$,
  for every $n \geq m$, $\mu_n (E) = \mu_{n+1} (p_{n(n+1)}^{-1} (E))$.
  When $E$ is an open set $U$, by using $(*)$ once in the first
  equality and once in the last equality,
  $\mu_n (U) = \nu_n (U \cap p_{mn}^{-1} (U_m)) = p_{n(n+1)}
  [\nu_{n+1}] (U \cap p_{mn}^{-1} (U_m)) = \nu_{n+1} (p_{n(n+1)}^{-1}
  (U) \cap p_{n(n+1)}^{-1} (p_{mn}^{-1} (U_m))) = \nu_{n+1}
  (p_{n(n+1)}^{-1} (U) \cap p_{m(n+1)}^{-1} (U_m)) = \mu_{n+1}
  (p_{n(n+1)}^{-1} (U))$.  When $E$ is the intersection
  $\fcap_{k \in \nat} V_k$ of an antitone sequence of open subsets
  $V_k$ of $X_n$, $\mu_n (E) = \finf_{k \in \nat} \mu_n (V_k)$
  (remember that $\mu_n$ is bounded)
  $= \finf_{k \in \nat} \mu_{n+1} (p_{n(n+1)}^{-1} (V_k)) = \mu_{n+1}
  (\fcap_{k \in \nat} p_{n(n+1)}^{-1} (V_k)) = \mu_{n+1}
  (p_{n(n+1)}^{-1} (E))$.

  We can now start our construction.  By $(*)$ with $U \eqdef U_m$ and
  $n \eqdef m$, $\mu_m (U_m) = \nu_m (U_m)$.  That is strictly larger
  than $r$ by assumption.  Let us pick $\epsilon > 0$ so that
  $\mu_m (U_m) > r + \epsilon$.

  Let us use Lemma~\ref{lemma:Gdelta:comp}.  Note that $\mu_m$ is
  $\tau$-smooth by definition.  Hence there is a compact $G_\delta$
  subset $Q_m$ of $Y_m$ included in $U_m$ such that
  $\mu_m (Q_m) > \mu_m (U_m) - \epsilon/2 > r + \epsilon/2$.

  Since $U_m$ is included in $X_m$, so is $Q_m$.  Since $X_m$ is a
  $G_\delta$ subset of $Y_m$, by Lemma~\ref{lemma:Gdelta:subspace},
  $Q_m$ is also a compact $G_\delta$ subset of $X_m$.

  Let $E_{m+1} \eqdef p_{m(m+1)}^{-1} (Q_m)$, by which we mean that
  $E_{m+1}$ is the set of points $x$ of $X_{m+1}$ (not $Y_{m+1}$,
  which would be meaningless) such that $p_{m(m+1)} (x) \in Q_m$.
  Since $p_{m(m+1)}$ is continuous, $E_{m+1}$ is a $G_\delta$ subset
  of $X_{m+1}$, hence also of 
  $Y_{m+1}$ by Lemma~\ref{lemma:Gdelta:subspace}, item~3.  We use
  Lemma~\ref{lemma:Gdelta:comp} again: there is a compact $G_\delta$
  subset $Q_{m+1}$ of $Y_{m+1}$ included in $E_{m+1}$ such that
  $\mu_{m+1} (Q_{m+1}) > \mu_{m+1} (E_{m+1}) - \epsilon/4$.  By
  $(**)$,
  $\mu_{m+1} (E_{m+1}) = \mu_{m+1} (p_{m(m+1)}^{-1} (Q_m)) = \mu_m
  (Q_m)$.  Therefore
  $\mu_{m+1} (Q_{m+1}) > \mu_m (Q_m) - \epsilon/4 > r+\epsilon/4$.
  Note also that $p_{m(m+1)} [Q_{m+1}] \subseteq Q_m$: for every
  $x \in Q_{m+1}$, $x$ is in $E_{m+1}$, so $p_{m(m+1)} (x) \in Q_m$.

  We iterate the process and build compact $G_\delta$ subsets $Q_n$ of
  $X_n$ for each $n \geq m$ such that
  $\mu_n (Q_n) > r + \epsilon/2^{n+1}$ and
  $p_{n(n+1)} [Q_{n+1}] \subseteq Q_n$.

  We claim that the inequality $\mu_n (Q_n) > r + \epsilon/2^{n+1}$
  implies $\nu_n^\bullet (Q_n) > r$, too.  For every open subset $U$
  of $X_n$ that contains $Q_n$, $\widehat U$ also contains $Q_n$, so
  $\nu_n (U) = \nu^*_n (\widehat U) \geq \nu^*_{n|p_{mn}^{-1} (U_m)}
  (\widehat U) = \mu_n (\widehat U) \geq \mu_n (Q_n) > r$.

  The rest of the argument is as for
  Theorem~\ref{thm:prohorov:loccomp}.  When $n < m$, define $Q_n$ as
  $\upc p_{nm} [Q_m]$.  All the sets $Q_n$ are compact saturated in
  $X_n$, and $p_{n(n+1)}$ maps $Q_{n+1}$ to $Q_n$ for every
  $n \in \nat$.  By Steenrod's theorem~\ref{thm:steenrod}, the
  canonical projective limit $Q$ of
  ${(p_{mn} \colon Q_n \to Q_m)}_{m \leq n \in \nat}$ is compact.  We
  verify that $\nu_n^\bullet (\upc p_n [Q]) \geq r$ for every
  $n \geq m$.  For every open neighborhood $V$ of $\upc p_n [Q]$,
  $\upc p_n [Q]$ is included in the open subset $V \cap Q_n$ of the
  compact space $Q_n$.  By Lemma~\ref{lemma:steenrod:open}, there is
  an index $k \geq n$ such that
  $\upc p_{nk} [Q_k] \subseteq V \cap Q_n$, hence such that
  $Q_k \subseteq p_{nk}^{-1} (V)$.  Since $r < \nu_k^\bullet (Q_k)$,
  it follows that $r < \nu_k (p_{nk}^{-1} (V)) = \nu_n (V)$.  This
  shows that $\nu_n^\bullet (\upc p_n [Q]) > r$.  We can now apply
  Remark~\ref{rem:unif:tight:kripke}, so ${(\nu_n)}_{n \in \nat}$ is
  uniformly tight.  We conclude by Theorem~\ref{thm:prohorov}.
\end{proof}

\begin{corollary}[Projective limits of continuous valuations,
  continuous complete quasi-metric case]
  \label{corl:prohorov:metric}
  Let
  ${(p_{ij} \colon (X_j, \nu_j) \to (X_i, \nu_i))}_{i \sqsubseteq j
    \in I}$ be a projective system of valued spaces.  Let
  $X, {(p_i)}_{i \in I}$ be a projective limit of the underlying
  projective system
  ${(p_{ij} \colon X_j \to X_i)}_{i \sqsubseteq j \in I}$.

  If $I$ has a countable cofinal subset,
  if every $\nu_i$ is locally finite, and if every $X_i$ is a
  continuous complete quasi-metric space (e.g., a complete metric
  space, a Polish space), then there is a unique continuous valuation
  $\nu$ on $X$ such that for every $i \in I$, $\nu_i = p_i [\nu]$.
  Moreover, $\nu$ is tight.  \qed
\end{corollary}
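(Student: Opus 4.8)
The plan is to derive this corollary directly from Theorem~\ref{thm:prohorov:loccomp:Gdelta}, whose hypotheses it almost matches verbatim. Both statements share the requirements that $I$ have a countable cofinal subset and that every $\nu_i$ be locally finite; they differ only in that the corollary replaces the condition ``$X_i$ embeds as a $G_\delta$ subset of a locally compact sober space'' by ``$X_i$ is a continuous complete quasi-metric space.'' Hence the entire task reduces to checking that every continuous complete quasi-metric space does embed as a $G_\delta$ subset of some locally compact sober space.

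For each $i \in I$, write $X_i, d_i$ for the given continuous complete quasi-metric space (with its $d_i$-Scott topology), and take $Y_i \eqdef \mathbf B (X_i, d_i)$, its space of formal balls with the Scott topology. The plan for the embedding is to invoke the three facts recalled at the start of this section. Since every continuous quasi-metric space is standard, Proposition~2.6 of \cite{GLN-lmcs17} exhibits $X_i$ as a $G_\delta$ subset of $Y_i$. Since $X_i, d_i$ is continuous complete, Theorem~3.7 of the same reference tells us that $Y_i = \mathbf B (X_i, d_i)$ is a continuous dcpo. And every continuous dcpo, in its Scott topology, is locally compact and sober. Thus each $Y_i$ is a locally compact sober space into which $X_i$ embeds as a $G_\delta$ subspace, exactly as required by Theorem~\ref{thm:prohorov:loccomp:Gdelta}.

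With this verification in hand, the remaining hypotheses transfer unchanged, and Theorem~\ref{thm:prohorov:loccomp:Gdelta} applies to the projective system ${(p_{ij} \colon (X_j, \nu_j) \to (X_i, \nu_i))}_{i \sqsubseteq j \in I}$, yielding a unique tight continuous valuation $\nu$ on $X$ with $p_i [\nu] = \nu_i$ for every $i$. The parenthetical special cases then follow at once, since every complete metric space is a continuous complete quasi-metric space (its open ball topology agreeing with its $d$-Scott topology), and every Polish space is in particular a complete metric space.

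I do not expect any genuine obstacle here: the substance lies in the cited embedding results and in Theorem~\ref{thm:prohorov:loccomp:Gdelta} itself, so the corollary is really just an instantiation. The only point demanding a little care is that the $d_i$-Scott topology on $X_i$---the topology for which $\nu_i$ is assumed to be a continuous valuation---must coincide with the subspace topology induced from the Scott topology on $Y_i$; but this coincidence is precisely what the embedding of \cite{GLN-lmcs17} provides, so no further work is needed.
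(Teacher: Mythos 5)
Your proposal is correct and matches the paper's intended argument: the corollary is stated with no separate proof (hence the \qed) precisely because it is an instantiation of Theorem~\ref{thm:prohorov:loccomp:Gdelta}, using the facts recalled at the start of Section~\ref{sec:proj-limits-compl}---namely that every continuous complete quasi-metric space $X_i, d_i$ occurs as a $G_\delta$ subset of its space of formal balls $\mathbf{B}(X_i, d_i)$ (Proposition~2.6 of \cite{GLN-lmcs17}), which is a continuous dcpo (Theorem~3.7 of loc.\ cit.), hence locally compact and sober. Your additional remark that the $d_i$-Scott topology coincides with the subspace topology induced from $\mathbf{B}(X_i, d_i)$ is exactly the right point of care, and it is indeed supplied by the cited embedding.
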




\bibliographystyle{alpha}
\newcommand{\etalchar}[1]{$^{#1}$}



\end{document}